\newcommand{\prs}{\langle\;,\;\rangle}
\newcommand{\too}{\longrightarrow}
\newcommand{\om}{\omega}
\newcommand{\esp}{\quad\mbox{and}\quad}
\newcommand{\wi}{\widetilde}
\def\br{[\;,\;]}
\newcommand{\G}{\mathfrak{g}}
\newcommand{\g}{\mathfrak{g}}
\newcommand{\h}{{\mathfrak{h}}}
\newcommand{\ad}{{\mathrm{ad}}}
\newcommand{\sd}{{\mathrm{sd}}}
\newcommand{\tr}{{\mathrm{tr}}}
\newcommand{\Li}{{\mathrm{L}}}
\newcommand{\Ri}{{\mathrm{R}}}
\newcommand{\Lei}{\mathrm{Leib}}
\newcommand{\Lie}{\mathrm{Lie}}
\newcommand{\spa}{{\mathrm{span}}}
\newcommand{\Om}{\Omega}
\newcommand{\al}{\alpha}
\newcommand{\be}{\beta}
\newcommand{\ga}{\gamma}
\newcommand{\e}{\epsilon}
\newcommand{\la}{\lambda}
\newcommand{\De}{\Delta}
\newcommand{\de}{\delta}
\newcommand{\Ll}{{\mathrm{L}}}
\newcommand{\Rr}{{\mathrm{R}}}
\newtheorem{Def}{Definition}[section]
\newtheorem{theo}{Theorem}[section]
\newtheorem{pr}{Proposition}[section]
\newtheorem{co}{Corollary}[section]
\newtheorem{exem}{Example}
\newtheorem{remark}{Remark}
\font\bb=msbm10
\def\R{\hbox{\bb R}}
\begin{document}

\begin{frontmatter}
	
	
	
	\title{ Complete Description of Invariant, Associative Pseudo-Euclidean Metrics on Left Leibniz Algebras via Quadratic Lie Algebras  }
	
	
	\author[label1]{ Fatima-Ezzahrae Abid  }
	\address[label1]{Universit\'e Cadi-Ayyad\\
		Facult\'e des sciences et techniques\\
		BP 549 Marrakech Maroc\\e-mail: abid.fatimaezzahrae@gmail.com}
	
	\author[label2]{Mohamed Boucetta}
	\address[label2]{Universit\'e Cadi-Ayyad\\
		Facult\'e des sciences et techniques\\
		BP 549 Marrakech Maroc\\e-mail: m.boucetta@uca.ac.ma}
	
	
	
	
	
	\begin{abstract} A pseudo-Euclidean non-associative algebra $(\G, \bullet)$ is a real algebra of finite dimension that has a metric, i.e., a bilinear, symmetric, and non-degenerate form $\prs$. The metric is considered $\Li$-invariant (resp. $\Ri$-invariant) if all left multiplications (resp. right multiplications) are skew-symmetric. The metric is called associative if $\langle u\bullet v,w\rangle= \langle u,v\bullet w\rangle$ for all $u, v, w \in \g$. These three notions coincide when $\G$ is a Lie algebra and in this case $\G$ endowed with the metric is known as a quadratic Lie algebra.
		
		This paper provides a complete description of $\Li$-invariant, $\Ri$-invariant, or associative pseudo-Euclidean metrics on left Leibniz algebras. It shows that a left Leibniz algebra with an associative metric is also right Leibniz and can be obtained easily from its underlying Lie algebra, which is a quadratic Lie algebra. Additionally, it shows that at the core of a left Leibniz algebra endowed with a $\Li$-invariant or $\Ri$-invariant metric, there are two Lie algebras with one quadratic and the left Leibniz algebra can be built from these Lie algebras. We derive many important results from these complete description.
		Finally, the paper provides a list of left Leibniz algebras with an associative metric up to dimension 6, as well as a list of left Leibniz algebras with an $\Li$-invariant metric, up to dimension 4, and $\Ri$-invariant metric up to dimension 5.
	\end{abstract}

\end{frontmatter}

{\it Keywords: Leibniz algebras, invariant metrics, associative metrics, double extension, $T^*$-extension.}    

\section{Introduction}\label{section1}

Lie groups having a bi-invariant pseudo-Riemannian metric is  a  very large class which  contains all semi-simple Lie groups. They became relevant some years ago when they were useful in the formulation of some physical problems, for instance in the so-known Adler-Kostant-Symes scheme. More recently they appeared in conformal field theory \cite{fig}. The Lie algebra of a Lie group having a bi-invariant pseudo-Riemannian is called quadratic. It is a real Lie algebra $(\G,\br)$ endowed with a bilinear non-degenerate form $\prs$ such that, for any $u\in\G$, $\ad_u:\G\too\G$, $v\mapsto [u,v]$ is skew-symmetric with respect to $\prs$. This class of pseudo-Euclidean Lie algebras has been the focus of intensive study by numerous authors (refer to \cite{ovando} and the bibliography therein).

There are three ways of generalizing the notion of bi-invariant pseudo-Euclidean metric on Lie algebras. Let $(\G,\bullet)$ be a nonassociative  algebra and $\prs$ a pseudo-Euclidean metric on $\G$. For any $u\in\G$, we denote by $\Li_u,\Ri_u:\G\too\G$, respectively, the left and the right multiplication by $u$, i.e., $\Li_uv=\Ri_vu=u\bullet v$. The metric is called $\Li$-invariant (resp. $\Ri$-invariant)  if, for any $u\in\G$, $\Li_u$ (resp. $\Ri_u$) is skew-symmetric. It is called associative if, for any $u\in\G$, $\Ri_u^*=\Li_u$, i.e., $\langle u\bullet v,w\rangle=\langle u,v\bullet w\rangle$.
In the first case (resp. the second case), we call $(\G,\bullet,\prs)$ $\Li$-quadratic (resp. $\Ri$-quadratic) and, in the last case, we call $(\G,\bullet,\prs)$ metrised. All these notions coincide when the product $\bullet$ is  anti-commutative.
In the previous decade, various nonassociative algebras endowed with an associative pseudo-Euclidean metric have been studied, for example, general nonassociative algebras (\cite{bor}), quadratic Lie algebras (see \cite{ovando} for a large bibliography), quadratic super-Lie algebras (\cite{al,ben}) or quadratic Malcev super-algebras (\cite{al1}), Leibniz algebras (\cite{lin, said}). However, $\Li$-quadratic or $\Ri$-quadratic nonassociative algebras have attracted little attention. There are some results on $\Li$-quadratic or $\Ri$-quadratic Leibniz algebras in \cite{saidh,lin}.

{Leibniz algebras} are  a non-commutative generalization of Lie algebras and were first introduced and investigated in the papers of  Bloh \cite{bloh, bloh1} under the name of D-algebras.
Then they were  rediscovered by  Loday \cite{loday} who called them  Leibniz algebras. A left Leibniz algebra (resp. right Leibniz algebra) is an algebra $(\g,\bullet)$ over a field $\mathbb{K}$ such that, for any $u\in\G$, $\Li_u$ (resp. $\Ri_u$) is a derivation of $(\G,\bullet)$. An algebra which is both left and right Leibniz is called symmetric Leibniz algebra. A Lie algebra is obviously a symmetric Leibniz algebra.
Many results of the theory of Lie algebras can be extended to left Leibniz algebras (see \cite{ayp, ayp1, ayp2}). Moreover, real left Leibniz algebras are the infinitesimal version of Lie racks. In 2004, Kinyon \cite{kinyon} proved that if $(X, e)$ is a pointed Lie rack, $T_eX$ carries a structure of left Leibniz algebra.

In this paper, we undertake a complete study of $\Li$-quadratic,  $\Ri$-quadratic or metrised left Leibniz algebras.
Our results can be translated to right Leibniz algebras since there is a correspondence between left and right Leibniz algebras. 

Let us explain the two main ideas at the heart of this paper. First, it is known that a metrised left Leibniz algebra is also a right Leibniz algebra and hence it is Lie admissible. We show that the underlying Lie algebra is a quadratic Lie algebra which can be used to build the metrised left Leibniz algebra straightforwardly. 

Secondly, let $(\G,\bullet,\prs)$ be a $\Li$-quadratic or $\Ri$-quadratic left Leibniz algebra. The vector space $\Lei(\G)=\spa\{u\bullet v+v\bullet u\}$ is an ideal known as the Leibniz ideal. Denote by $\Lei(\G)^\perp$ its orthogonal. When $\Lei(\G)\cap\Lei(\G)^\perp=\{0\}$,  $\G$ is a Lie algebra when the metric is $\Ri$-invariant and, when the metric is $\Li$-invariant,
$\G=\Lei(\G)\oplus\Lei(\G)^\perp$ and $\Lei(\G)^\perp$ is a quadratic Lie algebra.

When  $I=\Lei(\G)\cap\Lei(\G)^\perp\not=\{0\}$, 
we show that $\h=\G/I^\perp$  has a structure of  a Lie algebra and $A=I^\perp/I\cap I^\perp$ has a structure of a  $\Li$-quadratic left Leibniz algebra with nondegenerate Leibniz ideal  when the metric is $\Li$-invariant, and a structure of a quadratic Lie algebra when the metric is $\Ri$-invariant . We call $A$ and $\h$ endowed with their structures the core of $(\G,\bullet,\prs)$.  Our main result is that $(\G,\bullet,\prs)$ can be reconstructed from its core in a way which combine the double extension process and the $T^*$-extension. The double extension  is a well-known process which appeared first in \cite{Medina} in the study of quadratic Lie algebras while the $T^*$-extension was introduced  in \cite{bor} in the study of metrised nonassociative algebras.

 Let us enumerate our main results and give the organization of the paper.

\begin{enumerate}
	\item In Section \ref{section2}, we study the properties of $\Li$-invariant, $\Ri$-invariant or associative bilinear forms (not necessary nondegenerate) on left Leibniz algebras. The main result of this section is the introduction of the accurate notion of Killing form on a left Leibniz algebras. It permits the generalization of  Cartan's criterions valid for Lie algebras to left Leibniz algebras (see Propositions \ref{killing} and \ref{sl}).
	\item In Section \ref{section3}, we undertake the study  of metrised left Leibniz algebras. In \cite{said}, it was pointed out that if a left Leibniz algebra has an associative metric then it is also right Leibniz. Based on this and on the characterization of symmetric Leibniz algebras given in \cite{saidbar}, we show that metrised left Leibniz algebras can be obtained from quadratic Lie algebras in an easy way (see Theorems \ref{commutative}, \ref{co} and \ref{dex}). Since quadratic Lie algebras are known up to dimension 6, we get almost without computation all metrised left Leibniz algebras up to dimension 6 (see Tables \ref{1})
	\item In Sections \ref{section4},  \ref{section5} and \ref{section5bis}, we give a complete description of $\Li$-quadratic and $\Ri$-quadratic left Leibniz algebras.  The main results of Sections \ref{section5} and \ref{section5bis} are Theorems \ref{main3} and \ref{main4}  which show that both $\Li$-quadratic and $\Ri$-quadratic left Leibniz algebras can be obtained by a combination of the double extension process and the $T^*$-extension from its core $A$ and $\h$. These two theorems have many interesting corollaries. When $A$ is a Lie algebra with a trivial center and $H^2(A)=0$ then the original algebra can be described easily (See Theorems \ref{semil} and \ref{semir}).
	We derive a complete description of $\Li$-quadratic simple left Leibniz algebras.
	We show that a $\Ri$-quadratic left Leibniz algebra which is semi-simple or for which the metric is definite positive or Lorentzian must be a Lie algebra.   We derive also a complete description of $\Li$-quadratic symmetric Leibniz algebra (see Theorem \ref{main}) and we deduce that any Lorentzian $\Li$-quadratic symmetric Leibniz algebra is actually a Lie algebra.

	\item Section \ref{apend} is an appendix where we give the details of the computations needed in the proofs of Theorems \ref{main3} and \ref{main4}.
	\item Finally, we provide a list of left Leibniz algebras with an associative metric up to dimension 6, as well as a list of left Leibniz algebras with an $\Li$-invariant metric, up to dimension 4, and $\Ri$-invariant metric up to dimension 5 (see Tables \ref{1}-\ref{3}).
\end{enumerate}

\paragraph{Notations} For a non associative algebra $(\G,\bullet)$, we denote by $\Li_u$ and $\Ri_u$, respectively, the left and the right multiplication by $u$, i.e., $\Li_u(v)=u\bullet v$ and $\Ri_u(v)=v\bullet u$. If $F,G$ are two vector subspaces of $\G$, we denote by $F\bullet G$ the subspace generated by $u\bullet v$ for any $u\in G$ and $v\in G$. A vector subspace $I
$ is a left ideal (resp. right ideal) if $\G\bullet I\subset I$ (resp. $I\bullet \G\subset I$) and $I$ is an ideal if it is both a left ideal and a right ideal. Finally, we denote
\[ Z^\ell(\G)=\{u\in \G,\Li_u=0\},\;Z^r(\G)=\{u\in \G,\Ri_u=0\} \esp Z(\G)=Z^\ell(\G)\cap Z^r(\G). \]
A metric on $\G$ is a nondegenerate bilinear symmetric form $\prs$. For any endomorphism $F:\G\too \G$ and a vector subspace $I$, we denote by $F^*$ the adjoint with respect to $\prs$ and by $I^\perp$ the orthogonal with respect to $\prs$. The vector subspace $I$  is called {non-degenerate} if $I \cap I^{\perp}=\{0\}$ and it is called totally isotropic if $I \subset I^{\perp}$. We call a metric Euclidean if it is definite positive and Lorentzian if it has the signature $(n-1,1)$.

\section{Invariant symmetric bilinear forms and the Killing form of left Leibniz algebras}\label{section2}
In the literature, there are three notions of invariant symmetric bilinear forms on a nonassociative algebra. We recall these notions and give some of their properties.
The Killing form of a Lie algebra is an invariant bilinear form which plays an important role and can be used to give a criterion for a Lie algebra to be semi-simple or solvable. Left Leibniz algebras generalize Lie algebras but, as we will show in this section, lack a good notion of Killing form.
We introduce a  bilinear symmetric form on any Leibniz algebra $\G$ and we show that it is the accurate notion of the  Killing form of $\g$ since, in particular, it can give  an analogue of Cartan's criterions for semi-simple and solvable Leibniz algebras.

Let $(A,\bullet)$ be a nonassociative algebra and $S$ a symmetric bilinear form on $A$. $S$ is called
\begin{enumerate}
	\item  $\Li$-invariant if for any $u,v,w\in A$, $S(u\bullet v,w)+S(v,u\bullet w)=0$,
	\item  $\Ri$-invariant if for any $u,v,w\in A$, $S(u\bullet v,w)+S(u,w\bullet v)=0$,
	\item  associative if for any $u,v,w\in A$, $S(u\bullet v,w)=S(u, v\bullet w)$.
\end{enumerate}
It is clear that these three notions coincide if $\bullet$ is anti-commutative. However, when $\bullet$ is commutative and $S$ is $\Li$-invariant or $\Ri$-invariant then $S(u\bullet v,w)=0$ for any $u,v\in A$.

\begin{pr}\label{invariant} Let $(A,\bullet)$ be a nonassociative algebra and $S$ a symmetric bilinear form on $A$. If $S$ satisfies two of the properties above then it satisfies the third one and, for any $u,v\in A$, $u\bullet v+v\bullet u\in\ker S$. In this case, we call $S$ bi-invariant.
	
\end{pr}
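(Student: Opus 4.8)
The plan is to reduce all three conditions to symmetry properties of a single trilinear form and then exploit the multiplication table of $S_3$. Set $T(u,v,w):=S(u\bullet v,w)$. Using only that $S$ is symmetric, rewrite each hypothesis: since $S(v,u\bullet w)=S(u\bullet w,v)=T(u,w,v)$, the form $S$ is $\Li$-invariant iff $T(u,v,w)=-T(u,w,v)$; since $S(u,w\bullet v)=S(w\bullet v,u)=T(w,v,u)$, it is $\Ri$-invariant iff $T(u,v,w)=-T(w,v,u)$; and since $S(u,v\bullet w)=S(v\bullet w,u)=T(v,w,u)$, it is associative iff $T(u,v,w)=T(v,w,u)$. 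Letting $S_3$ act on trilinear forms by $(\sigma\cdot T)(u_1,u_2,u_3)=T(u_{\sigma(1)},u_{\sigma(2)},u_{\sigma(3)})$ (a left action for the usual composition of permutations), these say exactly that $(23)\cdot T=-T$, $(13)\cdot T=-T$ and $(123)\cdot T=T$, respectively.

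First I would prove that any two of the conditions imply the third. Because $(23)(13)=(123)$, $(23)(123)=(13)$ and $(123)(13)=(23)$ in $S_3$, each of the three identities is obtained from the other two simply by composing the corresponding group elements; for instance, if $(23)\cdot T=-T$ and $(13)\cdot T=-T$ then $(123)\cdot T=(23)\cdot\bigl((13)\cdot T\bigr)=(23)\cdot(-T)=T$, which is associativity, and the two remaining implications are proved in the same fashion. (Equivalently, and without any group-theoretic language, each implication is a two-line chain of substitutions using the two hypotheses together with the symmetry of $S$.) Then, assuming all three properties now hold, the claim $u\bullet v+v\bullet u\in\ker S$ follows in one step: $\Li$-invariance gives $S(u\bullet v,w)=-S(v,u\bullet w)$ and associativity gives $S(v\bullet u,w)=S(v,u\bullet w)$, so adding these yields $S(u\bullet v+v\bullet u,w)=0$ for every $w$; since $S$ is symmetric, $u\bullet v+v\bullet u$ lies in $\ker S$.

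The only delicate point — and the one place an error could slip in — is the translation of the three invariance conditions into the $T$-identities: one must apply the symmetry of $S$ in the correct slot each time, and fix a consistent convention for the $S_3$-action so that the composition identities $(23)(13)=(123)$, etc., carry the right signs. Beyond this bookkeeping there is no genuine obstacle; the statement is essentially a reflection of the fact that the trivial and sign characters are the only one-dimensional representations of $S_3$, so a trilinear form cannot be antisymmetric in two pairs of arguments without being anti-invariant under the remaining transposition as well.
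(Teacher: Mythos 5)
Your proof is correct and is essentially the paper's argument: the paper verifies one case by the same two-line chain of substitutions using the symmetry of $S$ (e.g.\ $S(u\bullet v,w)=-S(v,u\bullet w)=S(v\bullet w,u)$) and notes the other cases are similar, which is exactly what your $S_3$-action on $T(u,v,w)=S(u\bullet v,w)$ encodes. The group-theoretic packaging is a pleasant way to see all three implications (and the sign bookkeeping) at once, but it is a reformulation rather than a different route.
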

\begin{proof} Suppose, for instance, that $S$ is $\Li$-invariant and $\Ri$-invariant. Then, for any $u,v,w\in A$,
	\[ S(u\bullet v,w)=- S(v,u\bullet w)=S(v\bullet w,u) \]and hence $S$ is associative. Moreover,
	\[ S(u\bullet v,w)=-S(v,u\bullet w)=S(v\bullet w,u)=-S(w,v\bullet u) \]which shows that $u\bullet v+v\bullet u\in\ker S$. The other cases can be treated in a similar way.
\end{proof}

Recall that  $(\G,\bullet)$ is a left Leibniz algebra if  for any $u,v,w\in\G$,
\begin{equation}\label{l1}
	u\bullet(v\bullet w)=(u\bullet v)\bullet w+v\bullet (u\bullet w)\quad\mbox{or}\quad \mathrm{L}_{u\bullet v}=[\mathrm{L}_u,\mathrm{L}_v].
\end{equation}

$(\G,\bullet)$ is a right Leibniz algebra if  for any $u,v,w\in\G$,
\begin{equation}\label{R1}
	(v\bullet w)\bullet u=(v\bullet u)\bullet w+v\bullet (w\bullet u)\quad\mbox{or}\quad \mathrm{R}_{w\bullet u }=-[\mathrm{R}_w,\mathrm{R}_u].
\end{equation}
It is obvious that if $(\G,\bullet)$ is a left Leibniz algebra then $(\G,\circ)$ is a right Leibniz algebra when $u\circ v=u\bullet v$.

$(\G,\bullet)$ is called a symmetric Leibniz algebra if it is both a left and right Leibniz algebra.

Let $(\G,\bullet)$ be a	 left Leibniz algebra. We denote $\Lei(\G)=\mathrm{span}\left\{ u\bullet v+v\bullet u,u,v\in\G  \right\}$. 
It is well-known \cite{ay} that $\Lei(\G)$ is an ideal of $\G$, for any $u\in\Lei(\G)$, $\mathrm{L}_u=0$, $\Lie(\G)=\G/\Lei(\G)$ is a Lie algebra and $\G$ is a Lie algebra if and only if $\Lei(\G)=\{0\}$. We call $\Lei(\G)$ the Leibniz ideal of $\G$.

We consider $B$ and $K$ the two bilinear symmetric forms on $\G$  given by  $$B(u,v)=\tr(\mathrm{L}_u\circ\mathrm{L}_v)
\esp K(u,v)=-\frac12\left(\tr(\mathrm{L}_{u}\circ \mathrm{R}_{v}) +\tr(\mathrm{L}_{v}\circ \mathrm{R}_{u})  \right).
$$ 

The following proposition is our first main result.

\begin{pr}\label{killing} Let $(\G,\bullet)$ be a left Leibniz algebra. Then the 2-forms $B$ and  $K$ are bi-invariant and hence $\ker B$ and $\ker K$ contain $\Lei(\G)$. Moreover, for any $u,v\in \G$,
	\begin{equation} \label{k}
		K(u,v)=\widehat{K}(\pi(u),\pi(v)) \end{equation}
	where $\pi:\G\too \G/\Lei(\G)$ and $\widehat{K}$ is the Killing form of the Lie algebra $\G/\Lei(\G)$. Furthermore, $\ker K$ is a solvable ideal of $\G$ and hence its contained in the radical of $\G$ (the largest solvable ideal in $\G$).

\end{pr}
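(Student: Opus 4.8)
The plan is to treat $B$ by a direct manipulation of traces, and to reduce every assertion about $K$ to the Killing form of the Lie algebra $\G/\Lei(\G)$; in particular it is convenient to establish \eqref{k} first and to deduce the bi-invariance of $K$ from it. For $B$, writing the left Leibniz identity as $\Li_{u\bullet v}=[\Li_u,\Li_v]$ and using the cyclic invariance of the trace, one checks directly that $B(u\bullet v,w)+B(v,u\bullet w)=\tr\big([\Li_u,\Li_v]\Li_w+\Li_v[\Li_u,\Li_w]\big)=\tr(\Li_u\Li_v\Li_w)-\tr(\Li_v\Li_w\Li_u)=0$ and, in the same way, $B(u\bullet v,w)-B(u,v\bullet w)=\tr\big([\Li_u,\Li_v]\Li_w-\Li_u[\Li_v,\Li_w]\big)=0$. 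Hence $B$ is $\Li$-invariant and associative, and Proposition \ref{invariant} gives that $B$ is bi-invariant and that $u\bullet v+v\bullet u\in\ker B$ for all $u,v$, i.e. $\Lei(\G)\subseteq\ker B$.

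For $K$, the crucial remark is that $\Ri_v$ annihilates $\Lei(\G)$: if $w\in\Lei(\G)$ then $\Li_w=0$, hence $\Ri_v(w)=w\bullet v=\Li_w(v)=0$. Now fix a vector space complement $\G=\Lei(\G)\oplus W$ and identify $W$ with the Lie algebra $\G/\Lei(\G)$, whose bracket is $[\bar u,\bar v]=\overline{u\bullet v}$. Since $\Lei(\G)$ is a two-sided ideal, both $\Li_u$ and $\Ri_v$ are block upper triangular for the splitting $\big(\Lei(\G),W\big)$, and the endomorphisms they induce on $\G/\Lei(\G)$ are $\ad_{\pi(u)}$ for $\Li_u$ and, because $\overline{w\bullet v}=[\bar w,\bar v]=-[\bar v,\bar w]$, exactly $-\ad_{\pi(v)}$ for $\Ri_v$. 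As the $\Lei(\G)$-diagonal block of $\Ri_v$ is zero, so is that of $\Li_u\Ri_v$, whence $\tr(\Li_u\Ri_v)=-\tr(\ad_{\pi(u)}\ad_{\pi(v)})=-\widehat K(\pi(u),\pi(v))$; symmetrising in $u,v$ gives \eqref{k}. Bi-invariance of $K$ is then immediate: $\pi$ is an algebra morphism onto $\G/\Lei(\G)$ and $\widehat K$ is $\ad$-invariant (the three invariance notions coinciding for the anticommutative bracket), so $K$ inherits all three invariance properties through \eqref{k}; in particular $\Lei(\G)=\ker\pi\subseteq\ker K$.

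For the last assertion, \eqref{k} and the surjectivity of $\pi$ give $\ker K=\pi^{-1}(\mathfrak{s})$, where $\mathfrak{s}:=\ker\widehat K$. The radical $\mathfrak{s}$ of the Killing form is an ideal of $\G/\Lei(\G)$, and it is solvable: for $x\in[\mathfrak{s},\mathfrak{s}]$ and $y\in\mathfrak{s}$ one has $\tr(\ad_x\ad_y)=\widehat K(x,y)=0$, so $\ad(\mathfrak{s})$ is a solvable linear Lie algebra by Cartan's criterion, hence so is $\mathfrak{s}$. Consequently $\ker K=\pi^{-1}(\mathfrak{s})$ is an ideal of $\G$ (preimage of an ideal under the morphism $\pi$) fitting in an extension $0\to\Lei(\G)\to\ker K\to\mathfrak{s}\to0$; since $\Lei(\G)\subseteq Z^\ell(\G)$ has trivial multiplication and $\mathfrak{s}$ is solvable, $\ker K$ is solvable, hence contained in the radical of $\G$. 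The one genuinely delicate point is the treatment of $K$: contrary to $\Li$, the right multiplications do not simplify under the left Leibniz identity, so the passage to the quotient Lie algebra — made possible precisely by the vanishing of $\Ri_v$ on $\Lei(\G)$ — is what carries the argument, the rest being routine bookkeeping.
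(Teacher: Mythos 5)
Your proof is correct and follows essentially the same route as the paper: a direct trace computation from $\Li_{u\bullet v}=[\Li_u,\Li_v]$ for $B$, and for $K$ the decomposition $\G=\Lei(\G)\oplus W$ with the block-triangular forms of $\Li_u$ and $\Ri_v$ (the paper writes these matrices out explicitly, with the same key facts that $\Ri_v$ kills $\Lei(\G)$ and induces $-\ad_{\pi(v)}$ on the quotient), yielding \eqref{k} and then bi-invariance and solvability of $\ker K$ exactly as in the paper.
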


\begin{proof} The fact that $B$ is bi-invariant is an immediate consequence of \eqref{l1}. To show that $K$ is bi-invariant, it suffices to show \eqref{k}.
	Consider a complement $V$ of $\Lei(\G)$, i.e., $\G=\Lei(\G)\oplus V$ and put, for any $u,v\in\G$, 
	\[ u\bullet v=[u,v]_V+\om(u,v) \]where $[u,v]_V\in V$ and $\om(u,v)\in\Lei(\G)$. Since, for any $u,v\in\G$, $u\bullet v+v\bullet u\in \Lei(\G)$ then $\br_V$ is skew-symmetric and, by using the Leibniz identity,
	one can check easily that $(V,\br_V)$ is a Lie algebra. For any $u\in V$, denote by $\ad_u^V:V\too V$ the adjoint. For any $u\in\Lei(\G)$ and $v\in V$, 
	$$\mathrm{L}_u=0,\;  \mathrm{R}_u=\left(\begin{array}{cc}0&\om(.,u)\\0&0 \end{array}     \right),\;
	\mathrm{L}_v=\left(\begin{array}{cc}D_v&\om(v,.)\\0&\ad_v^V \end{array}     \right)\esp \mathrm{R}_v=\left(\begin{array}{cc}0&\om(.,v)\\0&-\ad_v^V \end{array}     \right).
	$$ So
	\[ K(u,v)=\begin{cases} 0\quad\mbox{if} \quad u\in \Lei(\G)\;\mbox{or}\; v\in \Lei(\G),\\
		\tr_V(\ad_u^V\circ\ad_v^V)=K_V(u,v)\quad\mbox{if} \quad u,v\in V,
	\end{cases} \]where $K_V$ is the Killing form of $(V,\br_V)$. From this formula, we can deduce that 
	$\pi:V\too\G/\Lei(\G)$ is an isomorphism of Lie algebras which sends $K_V$ to the quotient of $K$.
	
	Since $K$ is bi-invariant then $\ker K$ is obviously an ideal.  We have $\Lei(\G)$ is solvable and, according to \eqref{k}, $\ker K/\Lei(\G)=\ker\widehat{K}$. It is a well-known  fact \cite{knapp} that the kernel of the Killing form of a Lie algebra is solvable,  hence $\ker K$ is solvable.
\end{proof}

The 2-form $B$ was presented by many authors as the Killing form of the Leibniz algebra (see \cite{ay, aypl}). However, it presents a serious inconvenient, namely, the quotient $\wi B$ of $B$ on $\G/\Lei(\G)$ is not, in general, the Killing form of  $\G/\Lei(\G)$ as the following example shows.

\begin{exem}\label{exem1} Consider the 2-dimensional Leibniz algebra $\G=\R^2$ endowed with the bracket
	\[ [e_2,e_1]=e_1\esp [e_2,e_2]=e_1. \]
	Then $B(e_1,e_1)=B(e_1,e_2)=0$ and $B(e_2,e_2)=1$. The quotient $\G/\Lei(\G)=\R e_2$ is abelian but the quotient of $\wi B$ of $B$ satisfies $\wi B(e_2,e_2)=1$. However, $K=0$.
	
\end{exem}

\begin{remark} From the proof of the last proposition, one can see that, for any $u,v\in V$,
	\[ B(u,v)=\tr_{\Lei(\G)}(D_u\circ D_v)+\tr_V(\ad_u^V\circ\ad_v^V) \]which explains the difference between $B$ and $K$.
\end{remark}

It is known \cite{ay} that a left Leibniz algebra is semi-simple (resp. solvable) if and only if $\G/\Lei(\G)$ is semi-simple (resp. solvable). Thus we can deduce the following result which generalize Cartan's criterions and shows that $K$ is the good notion of Killing form.

\begin{pr}\label{sl}\begin{enumerate}\item  A left Leibniz algebra $\G$ is semi-simple if and only if $\ker K=\Lei(\G)$. 
		\item  A left Leibniz algebra $\G$ is solvable if and only if $\G\bullet \G\subset\ker K$.
	\end{enumerate}
\end{pr}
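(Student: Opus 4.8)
The plan is to reduce everything to the corresponding Cartan criteria for the Lie algebra $\G/\Lei(\G)$ using the identity \eqref{k} established in Proposition \ref{killing}, together with the cited facts that a left Leibniz algebra $\G$ is semi-simple (resp. solvable) if and only if $\G/\Lei(\G)$ is. Write $\pi:\G\too\G/\Lei(\G)$ and $\widehat K$ for the Killing form of the Lie algebra $\G/\Lei(\G)$; recall from \eqref{k} that $K(u,v)=\widehat K(\pi(u),\pi(v))$, so that $\ker K=\pi^{-1}(\ker\widehat K)$ and in particular $\ker K\supset\Lei(\G)$ always.

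For part (1): $\G$ is semi-simple iff $\G/\Lei(\G)$ is a semi-simple Lie algebra iff, by Cartan's criterion for Lie algebras, $\widehat K$ is non-degenerate, i.e. $\ker\widehat K=\{0\}$. Since $\ker K=\pi^{-1}(\ker\widehat K)$, the condition $\ker\widehat K=\{0\}$ is exactly $\ker K=\pi^{-1}(\{0\})=\Lei(\G)$. This gives the equivalence $\G$ semi-simple $\iff\ker K=\Lei(\G)$.

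For part (2): $\G$ is solvable iff $\G/\Lei(\G)$ is a solvable Lie algebra. By Cartan's solvability criterion, the Lie algebra $\G/\Lei(\G)$ is solvable iff $\widehat K([x,y],z)=0$ for all $x,y,z$, equivalently $[\G/\Lei(\G),\G/\Lei(\G)]\subset\ker\widehat K$. Now $\pi(\G\bullet\G)=[\pi(\G),\pi(\G)]$ is the derived ideal of $\G/\Lei(\G)$, so $[\G/\Lei(\G),\G/\Lei(\G)]\subset\ker\widehat K$ translates, via $\ker K=\pi^{-1}(\ker\widehat K)$ and surjectivity of $\pi$, into $\G\bullet\G\subset\ker K$. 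Hence $\G$ is solvable iff $\G\bullet\G\subset\ker K$.

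The only genuine point to be careful about is the translation between conditions "upstairs" on $\G$ and "downstairs" on $\G/\Lei(\G)$: since $\pi$ is surjective with kernel $\Lei(\G)\subset\ker K$, any relation of the form $S\subset\ker K$ with $S$ a $\pi$-saturated subset (such as $\Lei(\G)$ or $\G\bullet\G+\Lei(\G)=\G\bullet\G$, using that $\Lei(\G)\subset\G\bullet\G$) is equivalent to $\pi(S)\subset\ker\widehat K$; this is immediate from \eqref{k}. I do not expect any real obstacle — all the substantive content (the behaviour of (semi)simplicity and solvability under passing to $\G/\Lei(\G)$, and Cartan's criteria themselves) is already available in the cited references and in Proposition \ref{killing}.
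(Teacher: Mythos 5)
Your proposal is correct and is exactly the deduction the paper intends: Proposition \ref{sl} is stated as an immediate consequence of the identity \eqref{k} from Proposition \ref{killing}, the cited fact that $\G$ is semi-simple (resp.\ solvable) if and only if $\G/\Lei(\G)$ is, and Cartan's criteria for Lie algebras. Your observation that $\ker K=\pi^{-1}(\ker\widehat K)$ and that $\pi(\G\bullet\G)=[\G/\Lei(\G),\G/\Lei(\G)]$ supplies precisely the translation the paper leaves implicit.
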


\section{Metrised   left Leibniz algebras}\label{section3}

Associative metrics on left Leibniz algebra were studied in \cite{said, saidh}. It was shown that if a left Leibniz algebra has an associative metric then it is also right Leibniz. In this section, we complete the results obtained in \cite{said, saidh} by giving a simple and complete description of symmetric Leibniz algebras having an associative metric. We call such pseudo-Euclidean symmetric Leibniz algebras metrised.

Let $(\G,\bullet)$ be a nonassociative algebra. Denote, respectively, by $\br$ and $\circ$ the skew-symmetric and symmetric parts of $\bullet$. That is
\begin{equation}\label{pralg}
	[u,v]= \frac12(u\bullet v- v\bullet u), \; u \circ v = \frac12(u\bullet v+v \bullet u) ,\esp u \bullet v=[u,v]+u\circ v. \; \forall u,v \in \g.
\end{equation}
Put, $\ad_u: \g \too \g$, $ v\mapsto [u,v]$ and $\sd: \g \too \g$,$ v\mapsto u \circ v$ for any $u \in \g$. Hence, 
\[ \Ll_u=\ad_u+\sd_u\esp \Rr_u=-\ad_u+\sd_u.\]

The following  result, even if it is easy to establish, will have an important consequence and to our knowledge doesn't appear in the literature.
\begin{pr}\label{assoc}
	Let $(\G,\bullet,\prs)$ be an algebra endowed with a metric $\prs$. Then $\prs$ is   associative if and only if $\prs$ is bi-invariant on
	$(\G,\br)$ and associative on  $(\G,\circ)$.
\end{pr}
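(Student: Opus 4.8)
The plan is to exploit the canonical splitting $\bullet=\br+\circ$ of \eqref{pralg} and to separate the associativity identity into a part living on $(\G,\br)$ and a part living on $(\G,\circ)$, the separation being governed by the transposition $u\leftrightarrow w$.

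The implication from right to left is immediate. Since $\br$ is anti‑commutative, being bi‑invariant on $(\G,\br)$ is the same as being associative on $(\G,\br)$, because the three invariance notions coincide for anti‑commutative products (see the discussion around Proposition~\ref{invariant}). So the hypothesis gives $\langle[u,v],w\rangle=\langle u,[v,w]\rangle$ and $\langle u\circ v,w\rangle=\langle u,v\circ w\rangle$ for all $u,v,w\in\G$; adding these two identities and using $u\bullet v=[u,v]+u\circ v$ yields $\langle u\bullet v,w\rangle=\langle u,v\bullet w\rangle$, i.e. $\prs$ is associative.

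For the converse, I would introduce the trilinear forms $\Phi(u,v,w)=\langle[u,v],w\rangle$ and $\Psi(u,v,w)=\langle u\circ v,w\rangle$. Here $\Phi$ is skew‑symmetric and $\Psi$ symmetric in the first two slots, and, since $\prs$ is symmetric, $\langle u,[v,w]\rangle=\Phi(v,w,u)$ and $\langle u,v\circ w\rangle=\Psi(v,w,u)$. Thus associativity of $\bullet$ becomes
\[ \Phi(u,v,w)+\Psi(u,v,w)=\Phi(v,w,u)+\Psi(v,w,u)\qquad(\star). \]
Swapping $u$ and $w$ in $(\star)$ and using the (anti)symmetry of $\Phi$ and $\Psi$ in the first two arguments to rewrite $\Phi(w,v,u)=-\Phi(v,w,u)$, $\Phi(v,u,w)=-\Phi(u,v,w)$, $\Psi(w,v,u)=\Psi(v,w,u)$, $\Psi(v,u,w)=\Psi(u,v,w)$ turns it into
\[ -\Phi(v,w,u)+\Psi(v,w,u)=-\Phi(u,v,w)+\Psi(u,v,w)\qquad(\star\star). \]
Adding $(\star)$ and $(\star\star)$ gives $\Phi(u,v,w)=\Phi(v,w,u)$, i.e. $\prs$ is associative — equivalently bi‑invariant — on $(\G,\br)$; subtracting them gives $\Psi(u,v,w)=\Psi(v,w,u)$, i.e. $\prs$ is associative on $(\G,\circ)$. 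I do not expect any genuine obstacle here: the only thing to watch is the sign bookkeeping when transporting the three arguments through both the symmetry of $\prs$ and the anti‑commutativity of $\br$, together with the remark that for the anti‑commutative product $\br$ the words ``associative'' and ``bi‑invariant'' are interchangeable.
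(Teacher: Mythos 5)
Your proof is correct and is essentially the paper's argument in trilinear-form language: the paper writes associativity as $\ad_u+\ad_u^*=\sd_u^*-\sd_u$ and concludes both sides vanish because one is symmetric and the other skew-symmetric, which is exactly your symmetrize/antisymmetrize step under a transposition, using the opposite parities of $\Phi$ and $\Psi$. Both directions check out, including the observation that bi-invariance and associativity coincide for the anti-commutative part $\br$.
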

\begin{proof}
	The metric is associative if and only if $\Ll_{u}=\Rr_{u}^{*}$ for any $u\in g$. That is,  for any $u\in\G$,
	\[  \ad_u+\mathrm{sd}_u=
	-\ad_u^*+\mathrm{sd}_u^*.\]
	Thus, $\ad_u+\ad_u^*=\sd_u^*-\sd_u$. Since $\ad_u+\ad_u^*$ is  symmetric  and $\sd_u^*-\sd_u$   skew-symmetric  then $\ad_u+\ad_u^*=\sd_u^*-\sd_u=0$, for any $u \in \g$  which completes the proof.
\end{proof}

Let $(\g,\bullet,\prs)$ be  a left Leibniz algebra endowed with an associative metric.  Then, for any $u\in \g$, $\Li_u^*=\Ri_u$ and by using \eqref{l1} and $(2)$, we deduce
 that $(\g,\bullet)$ is  also right Leibniz algebra. This remark appeared in \cite[Theorem 2.4]{said}.   
We conclude that if a left Leibniz algebra has an associative metric then it is a symmetric Leibniz algebra.

On the other hand, according to \cite[Proposition 2.11]{saidbar}, any symmetric Leibniz algebra $(\G,\bullet)$ is given by 
\[ u\bullet b=[a,b]+\om(u,v), \]where $\br$ is a Lie bracket on $\G$ and $\om$ is a bilinear symmetric map  
$\omega \, : \g \times \g \longrightarrow Z(\g)$ ($Z(\G)$ is the center of $(\G,\br)$), $(x,y) \mapsto \omega(x,y)= x\circ y$ such that, for any $x,\, y,\, z \in \g$,
\begin{equation}\label{eq}
	\omega([x, y], z) = \omega(\omega(x, y), z) = 0.
\end{equation}
We call $(\G,\br+\om)$ the symmetric Leibniz algebra obtained from the Lie algebra $(\G,\br)$ by means of $\om$. 

By using this characterization of symmetric Leibniz algebras an  Proposition \ref{assoc}, we get a new and complete description of symmetric Leibniz algebras endowed with an associative metric. But before, we can give a complete description of metrised symmetric Leibniz algebras whose underlying Lie algebra is abelian.

\begin{theo} \label{commutative} Let $(\G,\bullet,\prs)$
	be a metrised 	  commutative symmetric Leibniz algebra. Then there exists a pseudo-Euclidean vector space $(A,\prs_A)$, a vector space $\h$ and a symmetric 3-linear form $T:\h\times\h\times\h\too\R$ such that $(\G,\bullet,\prs)$ is isomorphic to $(\h\oplus A\oplus \h^*,\circ,\prs_0)$ where   $\h\circ\h\subset\h^*$ and for any $X,Y,Z\in\h$, $a\in A$, $\al\in\h^*$,
	\[ \langle X+a+\al,X+a+\al\rangle_0=2\prec\al,X\succ+\langle a,a\rangle_A,\;\Li_a=\Li_\al=0  \esp \prec X\circ Y,Z\succ=T(X,Y,Z), \]

\end{theo}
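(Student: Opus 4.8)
The plan is to peel off the structure in three stages: pin down the product via the classification of symmetric Leibniz algebras quoted above, read off from associativity of the metric that $\G\bullet\G$ is totally isotropic with orthogonal $Z(\G)$, and then repackage everything as a $T^{*}$-type decomposition. First, since $\bullet$ is commutative its skew part $\br$ vanishes, so by Proposition~\ref{assoc} and \cite[Proposition 2.11]{saidbar} the product equals a symmetric map $\om$ with values in $Z(\G)$ satisfying $\om(\om(x,y),z)=0$; equivalently $(\G\bullet\G)\bullet\G=\{0\}$. Write $\G^{2}=\G\bullet\G$, which by commutativity is exactly $\Lei(\G)$. The relation $(\G^{2})\bullet\G=\{0\}$ says precisely that $\G^{2}\subseteq Z^{\ell}(\G)=Z(\G)$.

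Next I would exploit associativity of $\prs$. For all $u,v,w,x\in\G$,
\[
\langle u\bullet v,\,w\bullet x\rangle=\langle (u\bullet v)\bullet w,\,x\rangle=0 ,
\]
so $\G^{2}$ is totally isotropic. Likewise $u\in(\G^{2})^{\perp}$ iff $\langle u\bullet v,w\rangle=0$ for all $v,w$, iff (by nondegeneracy) $\Li_{u}=0$, iff $u\in Z(\G)$; hence $(\G^{2})^{\perp}=Z(\G)$. Consequently $\G^{2}\subseteq(\G^{2})^{\perp}$ and the restriction of $\prs$ to $Z(\G)$ has radical $Z(\G)\cap Z(\G)^{\perp}=(\G^{2})^{\perp}\cap\G^{2}=\G^{2}$.

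Now comes the decomposition. Choose a complement $A$ of $\G^{2}$ in $Z(\G)$; because $\G^{2}$ is the radical of $\prs|_{Z(\G)}$ we automatically get $A\perp\G^{2}$ and $\prs|_{A}$ nondegenerate, which we call $\langle\;,\;\rangle_{A}$. Pick any complement of $Z(\G)$ in $\G$ and adjust it, first by an element of $A$ to make it orthogonal to $A$, and then by an element of $\G^{2}$ to make it totally isotropic; the second adjustment is possible since $(\G^{2})^{\perp}=Z(\G)$ forces the pairing between the complement and $\G^{2}$ to be nondegenerate. This produces a totally isotropic subspace $\h$ with $\G=\h\oplus A\oplus\G^{2}$, $\h\perp A$, and $\prs$ inducing a perfect pairing $\h\times\G^{2}\to\R$; identifying $\G^{2}\cong\h^{*}$ through it yields, for $X\in\h$, $a\in A$, $\al\in\h^{*}$,
\[
\langle X+a+\al,\,X+a+\al\rangle=\langle a,a\rangle_{A}+2\prec\al,X\succ .
\]
Since $A,\h^{*}\subseteq Z(\G)$ we have $\Li_{a}=\Li_{\al}=0$, and $\h\bullet\h\subseteq\G^{2}=\h^{*}$; setting $T(X,Y,Z)=\prec X\bullet Y,Z\succ=\langle X\bullet Y,Z\rangle$, commutativity gives $T(X,Y,Z)=T(Y,X,Z)$ while associativity gives $\langle X\bullet Y,Z\rangle=\langle X,Y\bullet Z\rangle=\prec Y\bullet Z,X\succ=T(Y,Z,X)$, so $T$ is a fully symmetric $3$-linear form. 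The linear map that is the identity on $\h\oplus A$ combined with $\G^{2}\cong\h^{*}$ is then an isometric algebra isomorphism onto the announced model $(\h\oplus A\oplus\h^{*},\circ,\prs_0)$; the constraint $\om(\om(x,y),z)=0$ holds there automatically because $\h^{*}\subseteq Z$.

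The only genuinely delicate point is the Witt-type adjustment of the complement in the third stage; it is routine once one has the two facts from the associativity step ($\G^{2}$ totally isotropic and $(\G^{2})^{\perp}=Z(\G)$), and everything else is bookkeeping forced by commutativity and the structure theorem for symmetric Leibniz algebras.
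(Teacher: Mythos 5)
Your proof is correct and follows essentially the same route as the paper's: both identify $\G\bullet\G$ as a totally isotropic subspace whose orthogonal is the annihilator $Z(\G)=\bigcap_u\ker\Li_u$, split off a nondegenerate complement $A$ inside it, produce a totally isotropic complement $\h$ dual to $\G\bullet\G$, and read off the symmetric $3$-form $T$ from associativity. The only difference is presentational — you invoke the structure theorem of \cite{saidbar} and spell out the Witt-type adjustment, where the paper works directly from $\Li_u\circ\Li_v=0$ and the symmetry of each $\Li_u$ — but the decomposition and all key steps coincide.
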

\begin{proof}
	According to what above, a commutative symmetric Leibniz algebra is a commutative algebra $(\G,\bullet)$ such that for any $u,v\in\G$, $\Li_u\circ\Li_v=0$. Suppose that $\prs$ is an associative metric on $\G$. Then, for any $u\in\G$, $\Li_u$ is symmetric with respect to $\prs$ and, if $I=\sum_{u\in\G}\mathrm{Im}\Li_u$, $I^\perp=\bigcap_{u\in\G}\ker\Li_u$ and $I\subset I^\perp$. Write $I^\perp=I\oplus A$ for a complement $A$. The restriction $\prs_A$ of the metric to $A$ is nondegenerate and  there exists a totally isotropic space $\h$ such that $A^\perp=I\oplus\h$ and $\G=I\oplus A\oplus \h$. The metric identify $I$ to the dual $\h^*$ and we can identify $(\G,\prs)$ to $(\h^*\oplus A\oplus\h,\prs_0)$.
	
	With this identification in mind, the bracket $\bullet$ satisfies $\h^*\oplus A\subset Z(\h^*\oplus A\oplus\h)$ and $\h\bullet\h\subset \h^*$ and the 3-linear form given by
	\[ T(X,Y,Z):=\prec X\bullet Y,Z\succ=\prec Y,X\bullet Z\rangle, \]for any $X,Y,Z\in\h$ is symmetric and defines $\bullet$.\end{proof}

We state now an important result.

\begin{theo}\label{co} Let $(\G,\bullet,\prs)$ be a pseudo-Euclidean symmetric Leibniz algebra obtained from $(\G,\br)$ by means of $\om$. We denote by $Z(\G)$ the center of $(\G,\br)$.  Then $\prs$ is associative if and only if  $(\G,\br,\prs)$ is a quadratic Lie algebra and there exists a 3-linear symmetric form $T:\G\times\G\times\G\too\R$ and a totally isotropic vector subspace $I\subset Z(\G)$ such that $T(I^\perp,.,.)=0$ and 
	\begin{equation}\label{T} \langle\om(x,y),z\rangle =T(x,y,z),\quad x,y,z\in\G. 
	\end{equation}
\end{theo}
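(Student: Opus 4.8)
The plan is to reduce the statement to Proposition \ref{assoc}. Write $\bullet=\br+\om$; since $\om$ is symmetric and $\br$ skew-symmetric, these are precisely the symmetric and skew-symmetric parts $\circ$ and $\br$ of $\bullet$ in the sense of \eqref{pralg}, so $\circ=\om$. By Proposition \ref{assoc}, $\prs$ is associative on $(\G,\bullet)$ if and only if it is bi-invariant on $(\G,\br)$ and associative on $(\G,\om)$. Because $\prs$ is nondegenerate, $\br$ is a Lie bracket, and for an anti-commutative product the three invariance notions all coincide with ``$\ad_u$ skew-symmetric for every $u$'' (the discussion following Proposition \ref{invariant}), bi-invariance of $\prs$ on $(\G,\br)$ is exactly the assertion that $(\G,\br,\prs)$ is a quadratic Lie algebra. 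Thus that half of the right-hand side is handled, and it remains to rephrase ``$\prs$ associative on $(\G,\om)$''.

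To this end I would introduce $T(u,v,w):=\langle\om(u,v),w\rangle$, which is trilinear and, as $\om$ is symmetric, symmetric in its first two slots. Associativity of $\prs$ on $(\G,\om)$ says $\langle\om(u,v),w\rangle=\langle u,\om(v,w)\rangle$, i.e. $T(u,v,w)=T(v,w,u)$; hence it is equivalent to $T$ being cyclic, and a trilinear form that is cyclic and symmetric in one adjacent pair is fully symmetric (conversely a symmetric $T$ is cyclic). So ``$\prs$ associative on $(\G,\om)$'' is equivalent to ``$T:=\langle\om(\cdot,\cdot),\cdot\rangle$ is symmetric'', and combined with the first paragraph this gives: $\prs$ is associative $\iff$ $(\G,\br,\prs)$ is quadratic and $T$ is symmetric.

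It then remains to insert the totally isotropic subspace $I\subset Z(\G)$ with $T(I^\perp,\cdot,\cdot)=0$; I claim that when $(\G,\br,\prs)$ is quadratic and $T$ is symmetric, the choice $I:=\im\,\om=\spa\{\om(x,y):x,y\in\G\}$ always works, which closes the equivalence. Indeed $I\subset Z(\G)$ by the very description of a symmetric Leibniz algebra obtained from $(\G,\br)$ by means of $\om$ (the characterization of \cite{saidbar} recalled above, where $\om$ is $Z(\G)$-valued and $\om(\om(x,y),z)=0$). Full symmetry of $T$ yields two things: first, for $z\in I^\perp$ one has $T(z,u,v)=T(u,v,z)=\langle\om(u,v),z\rangle=0$ since $\om(u,v)\in I$, so $T(I^\perp,\cdot,\cdot)=0$; second, it makes $(\G,\om)$ an associative commutative algebra, whence $\langle\om(u,v),\om(x,y)\rangle=\langle u,\om(v,\om(x,y))\rangle=\langle u,\om(\om(x,y),v)\rangle=0$ using $\om(\om(x,y),z)=0$, so $I$ is totally isotropic. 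The converse direction is then immediate: the data on the right-hand side, via $\langle\om(x,y),z\rangle=T(x,y,z)$ and symmetry of $T$, return associativity of $\prs$ on $(\G,\om)$, and with quadraticity of $(\G,\br,\prs)$ and Proposition \ref{assoc} we recover that $\prs$ is associative on $(\G,\bullet)$.

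The main obstacle here is not depth but care: one must check that $\br$ and $\om$ genuinely are the skew and symmetric parts of $\bullet$, and the only point where the Leibniz structure is really exploited (beyond $\om$ being central) is in verifying that $\im\,\om$ is totally isotropic, where the identity $\om(\om(x,y),z)=0$ is essential; everything else follows formally from Proposition \ref{assoc} and the elementary fact about cyclic trilinear forms.
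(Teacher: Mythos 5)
Your proof is correct and follows essentially the same route as the paper: both reduce to Proposition \ref{assoc} by identifying $\br$ and $\om$ with the skew-symmetric and symmetric parts of $\bullet$, define $T=\langle\om(\cdot,\cdot),\cdot\rangle$, take $I=\spa\{\om(x,y)\}$, and use the identity $\om(\om(x,y),z)=0$ from the characterization of symmetric Leibniz algebras to get total isotropy of $I$. The only (harmless) difference is that the paper's converse additionally re-derives the conditions \eqref{eq} for the $\om$ built from $(I,T)$, turning it into a construction statement, whereas you use the standing hypothesis that $(\G,\br+\om)$ is already a symmetric Leibniz algebra, which is all the literal ``if and only if'' requires.
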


\begin{proof} According to Proposition \ref{assoc} and the description of symmetric Leibniz algebras given above, if $\prs$ is associative then $(\G,\br,\prs)$ is a quadratic Lie algebra and $\om$ satisfies $I=\mathrm{span}\{\om(x,y),x,y\in\G\}\subset Z(\G)$, 
	\[ \om([x,y],z)=\om(\om(x,y),z)=0\esp \langle \om(x,y),z\rangle=\langle \om(x,z),y\rangle,\quad x,y,z\in\G.  \]
	For any $x,y,z,t\in\G$,
	\[ 0= \langle \om(\om(x,y),z),t\rangle =\langle \om(z,t),\om(x,y)\rangle =0\]and hence $I$ is totally isotropic. 
	Put $T(x,y,z)= \langle \om(x,y),z\rangle$. Then $T$ is symmetric and for any $z\in I^\perp$ and $x,y\in\G$,
	\[ T(x,y,z)=\langle\om(x,y),z\rangle =0. \]
	
	Conversely, let $(\G,\br,\prs)$ be a quadratic Lie algebra, $I\subset Z(\G)$ a totally isotropic vector subspace and $T:\G\times\G\times\G\too\R$ a
	3-linear symmetric form such that $T(I^\perp,.,.)=0$. Define the symmetric bilinear form $\om:\G\times\G\too\G$ by
	\[ T(x,y,z)=\langle\om(x,y),z\rangle =0,\;x,y,z\in\G. \]It is obvious that $\om$ takes its values in $I$ and, for any $x,y,z,t\in\G$,
	\[ \langle \om(\om(x,y),z),t\rangle=T(\om(x,y),z,t)=0\esp \langle \om([x,y],z),t\rangle=\langle\om(z,t),[x,y]\rangle=-\langle[x,\om(z,t)],y\rangle=0. \]
	So $(\G,\br+\om,\prs)$ is a metrised symmetric Leibniz algebra.
\end{proof}

\begin{remark}

	Let $(\G,\bullet,\prs)$
	be a metrised 	  symmetric Leibniz algebra obtained from $(\G,\br)$ by means of $\om$. If $[\G,\G]=\G$ then $Z(\G)=[\G,\G]^\perp=\{0\}$. Thus $\om=0$ and hence  $(\G,\bullet,\prs)$ is a quadratic Lie algebra.

\end{remark}

In Theorem \ref{co}, the determination of $I$ and $T$ is an easy task so  the determination of metrised non Lie symmetric Leibniz algebras reduces to the determination of quadratic Lie algebras with non trivial center. If the center is nondegenerate, we will see that they can be described easily. Moreover,
quadratic Lie algebras with non trivial degenerate center are determined by the double extension process (see \cite{ovando, favre}).
Let us recall the process of double extension (see \cite{ovando} for more details).

Let $(\h,\br_\h,\prs_\h)$ be a quadratic  Lie algebra. The double extension of $\h$ by  means of $A\in\mathrm{so}(\h,\prs_\h)$ is the Lie algebra $(\de_A(\h)=\R e\oplus\h\oplus\R \bar{e},\br)$ where the non vanishing Lie brackets and the metric are given, for any $u,v\in\h$, by
\[ [\bar{e},u]=Au,\; [u,v]=\langle Au,v\rangle_\h e+[u,v]_\h,\; \langle xe+u+\bar{x}\bar{e},xe+u+\bar{x}\bar{e}\rangle =2x\bar{x}+\langle u,u\rangle_\h.  \]
$(\de_A(\h),\br,\prs)$ is a quadratic Lie algebra.

\begin{theo}\label{dex} Let $(\G,\bullet,\prs)$ be a metrised symmetric Leibniz algebra obtained from $(\G,\br)$ by means of $\om$ such that $[\G,\G]\not=\G$. Then the following assertions hold.
	
	\begin{enumerate}
		\item If $Z(\G)$ is non-degenerate then $\G=[\G,\G]\oplus Z(\G)$ and there exists a totally isotropic subspace $I\in Z(\G)$ and a 3-linear form $T :Z(\G)\times Z(\G)\times Z(\G)\too \R$ such that $T(I^\perp\cap Z(\G),.,.)=0$ and
		\[ x\bullet y=\begin{cases} [x,y]\quad\mbox{if}\quad x\in[\G,\G]\;\mbox{or}\; y\in[\G,\G],\\
			\om(x,y)\quad\mbox{if}\quad x,y\in Z(\G).
		\end{cases} \]where the restriction of $\om$ to $Z(\G)$ is given by \eqref{T}.
		\item If $Z(\G)$ is degenerate then $(\G,\br,\prs)=\de_A(\h)$ and when $A$ is invertible or $(\h,\br_\h,\prs_\h)$ is Euclidean then $\om$ satisfies $\om(\h\oplus\R e,.)=0$ and $\om(\bar{e},\bar{e})=\mu e$ with $\mu\in\R$.
		
	\end{enumerate}

\end{theo}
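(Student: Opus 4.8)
The plan is to combine Theorem \ref{co} with the standard structure theory of quadratic Lie algebras with degenerate center, as recorded in \cite{ovando, favre}. By Theorem \ref{co}, since $\prs$ is associative, $(\G,\br,\prs)$ is a quadratic Lie algebra, and there is a totally isotropic $I\subset Z(\G)$ and a symmetric $3$-linear form $T$ with $T(I^\perp,\cdot,\cdot)=0$ and $\langle\om(x,y),z\rangle=T(x,y,z)$. So the whole statement is really about translating the known description of $(\G,\br,\prs)$ into the two cases, and then pinning down what the constraint $T(I^\perp,\cdot,\cdot)=0$ forces on $\om$ in each case.

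For part (1), I would argue as follows. If $Z(\G)$ is non-degenerate then $\G=Z(\G)\oplus Z(\G)^\perp$ as an orthogonal direct sum of ideals, and a standard fact on quadratic Lie algebras gives $Z(\G)^\perp=[\G,\G]$ (indeed $[\G,\G]^\perp=Z(\G)$ always holds because $\prs$ is $\ad$-invariant, and non-degeneracy of $Z(\G)$ makes the inclusion an equality after taking orthogonals). This yields $\G=[\G,\G]\oplus Z(\G)$. Now $\om$ takes values in $I\subset Z(\G)$ by Theorem \ref{co}, and $\om([x,y],z)=0$ means $\om$ kills $[\G,\G]$ in its first (hence, by symmetry, either) argument; so $\om$ is supported on $Z(\G)\times Z(\G)$, which gives the displayed formula for $x\bullet y$. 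It remains to observe that the restriction of $\om$ to $Z(\G)\times Z(\G)$ is governed by the restriction of $T$, and the vanishing condition $T(I^\perp,\cdot,\cdot)=0$ becomes $T(I^\perp\cap Z(\G),\cdot,\cdot)=0$ once we restrict all arguments to $Z(\G)$ — here one uses that $\om$ vanishing on $[\G,\G]$ means only the $Z(\G)$-part of $I^\perp$ is relevant. This part is essentially bookkeeping.

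For part (2), when $Z(\G)$ is degenerate, the classification of quadratic Lie algebras says $(\G,\br,\prs)$ is a double extension $\de_A(\h)$ of a quadratic Lie algebra $(\h,\br_\h,\prs_\h)$ by some $A\in\mathrm{so}(\h,\prs_\h)$; I would just cite \cite{ovando, favre} for this. Then I need to compute $I^\perp$ and use $T(I^\perp,\cdot,\cdot)=0$. Write $\de_A(\h)=\R e\oplus\h\oplus\R\bar e$ with the brackets and metric as recalled before the theorem. Since $\om$ has values in $I\subset Z(\G)$, and since $e$ is always central in $\de_A(\h)$ while $Z(\G)$ is degenerate, one expects $I$ to sit inside $\R e\oplus\ker A$ (more precisely $I\subset Z(\G)$ and $Z(\G)\cap(\h\oplus\R\bar e)$ is controlled by $\ker A$). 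The key computation is: if $A$ is invertible then $Z(\G)=\R e$, so $I\subset\R e$, hence $I^\perp\supset \R e\oplus\h$ (the orthogonal of $\R e$ is $\R e\oplus\h$), and therefore $T$ vanishes whenever one argument lies in $\R e\oplus\h$; since $\om$ has values in $\R e$ and is symmetric and $3$-linearly dual to $T$, this forces $\om(\h\oplus\R e,\cdot)=0$ and leaves only $\om(\bar e,\bar e)=\mu e$ with $\mu=T(\bar e,\bar e,\bar e)$. The Euclidean case is similar: if $(\h,\prs_\h)$ is Euclidean, then $\mathrm{so}(\h,\prs_\h)$ consists of semisimple operators, so $\ker A$ is the full fixed space and $\h=\ker A\oplus\im A$ orthogonally; one checks $Z(\G)=\R e\oplus\ker A$ and its orthogonal is $\R e\oplus\h$, again giving $\om(\h\oplus\R e,\cdot)=0$ and $\om(\bar e,\bar e)=\mu e$.

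The main obstacle I anticipate is the second part: carefully identifying the center $Z(\G)$ of the double extension and its orthogonal in terms of $\ker A$, and then verifying that the bracket conditions $\om([x,y],z)=\om(\om(x,y),z)=0$ together with $T(I^\perp,\cdot,\cdot)=0$ genuinely collapse $\om$ to the single scalar $\mu$ — this requires being careful that $\bar e$ itself does not lie in $I^\perp$ (it does not, precisely because $\langle\bar e,e\rangle=1$ and $e$ may lie in $I$), so that $\om(\bar e,\bar e)$ is not forced to vanish. The rest, including part (1) and the converse-type remarks, is routine once Theorem \ref{co} and the double extension classification are in hand.
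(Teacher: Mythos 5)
Your proposal follows essentially the same route as the paper: part (1) is Theorem \ref{co} plus $[\G,\G]^\perp=Z(\G)$, and part (2) reduces to identifying the admissible totally isotropic subspaces $I\subset Z(\G)$ inside the double extension $\de_A(\h)$. The only place you slip is the Euclidean sub-case: the center of $\de_A(\h)$ is $\R e\oplus\bigl(Z(\h)\cap\ker A\bigr)$, not $\R e\oplus\ker A$ (a vector $u\in\ker A$ is central only if it also satisfies $[u,v]_\h=0$ for all $v$), and the decisive point is not the orthogonal of $Z(\G)$ but the observation that, since $\prs_\h$ is positive definite and $\langle xe+u,xe+u\rangle=\langle u,u\rangle_\h$, every totally isotropic subspace of $Z(\G)$ is forced into $\R e$; from $I\subseteq\R e$ one then gets $I^\perp\supseteq\R e\oplus\h$ and the collapse of $\om$ exactly as in your invertible case. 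With that correction the argument is complete and matches the paper's.
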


\begin{proof} The first assertion is an immediate consequence of Theorem \ref{co} and the fact that $[\G,\G]^\perp=Z(\G)$. 
	
	If If $Z(\G)$ is degenerate then $(\G,\br,\prs)$ is obtained by the double extension process and hence $(\G,\br,\prs)=\de_A(\h)$. In this case, $Z(\G)=\R e\oplus Z(\h)\cap \ker A$. If $A$ is invertible then $Z(\G)=\R e$ and we get the result as consequence of Theorem \ref{co}. If the metric on $\h$ is definite positive then $\R e$ is  the only non trivial totally isotropic subspace of $Z(\G)$ and the result follows.
	\end{proof}

\begin{co}\label{definite} Let $(\G,\bullet,\prs)$ be a metrised symmetric Leibniz algebra such the restriction of $\prs$ to  $Z(\G)$ is definite positive. Then  $(\G,\bullet,\prs)$ is a quadratic Lie algebra.
	
\end{co}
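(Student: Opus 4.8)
The plan is to read the statement off directly from Theorem~\ref{co}. By that theorem, since $(\G,\bullet,\prs)$ is metrised, there is a Lie bracket $\br$ on $\G$ for which $(\G,\br,\prs)$ is a quadratic Lie algebra, together with a totally isotropic subspace $I\subset Z(\G)$ and a symmetric $3$-linear form $T$ satisfying $T(I^\perp,\cdot,\cdot)=0$, such that $\bullet=\br+\om$ where $\om$ is the symmetric bilinear map determined by $\langle\om(x,y),z\rangle=T(x,y,z)$; moreover $I=\spa\{\om(x,y):x,y\in\G\}$.

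First I would use the hypothesis that $\prs$ restricted to $Z(\G)$ is definite positive only through the weaker consequence that this restriction is \emph{anisotropic}: no nonzero vector of $Z(\G)$ is isotropic, hence $Z(\G)$ contains no nonzero totally isotropic subspace. Since $I\subset Z(\G)$ is totally isotropic, this forces $I=\{0\}$. Consequently $\om=0$, because its image spans $I$; thus $u\bullet v=[u,v]$ for all $u,v\in\G$, i.e. $(\G,\bullet)$ coincides with the Lie algebra $(\G,\br)$. As the product is now anti-commutative, associativity of $\prs$ is the same as bi-invariance, so $(\G,\bullet,\prs)$ is a quadratic Lie algebra.

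There is essentially no obstacle here; the corollary is a direct specialization of Theorem~\ref{co}. If one prefers to route the argument through Theorem~\ref{dex} instead, one notes that a definite positive restriction to $Z(\G)$ makes $Z(\G)$ non-degenerate, so only the first case of that theorem can occur (and the subcase $[\G,\G]=\G$ already forces $Z(\G)=\{0\}$); then $I\subset I^\perp\cap Z(\G)$ together with the definiteness again gives $I=\{0\}$ and $\om=0$. Going through Theorem~\ref{co} is the cleaner route since it avoids this case split.
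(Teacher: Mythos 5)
Your argument is correct and is exactly the derivation the paper intends: the corollary is stated without proof as an immediate consequence of Theorem~\ref{co}, where the positive-definiteness of $\prs$ on $Z(\G)$ forces the totally isotropic subspace $I\subset Z(\G)$ (in which $\om$ takes its values) to vanish, hence $\om=0$ and $\bullet=\br$. Your remark that only anisotropy of the restriction is needed, and the alternative route through Theorem~\ref{dex}, are both fine but add nothing beyond the paper's implicit reasoning.
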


The lists of indecomposable quadratic Lie algebras of dimension $\leq6$, up to an automorphism, are given in \cite{ovando} and \cite{Baum}. From these lists, we can find all quadratic Lie algebras of dimension $\leq6$, up to an automorphism. By using Theorems \ref{commutative}-\ref{dex} and Corollary \ref{definite}, we get  almost without any computation the following result.

\begin{theo}\label{main2}\begin{enumerate}\item
		In dimension 2 there is only one metrised non Lie symmetric Leibniz algebra up to an isomorphism, namely $\R^2$ endowed with the bracket and the metric
		\[ e_1\bullet e_1=\pm e_2\esp m=\left(\begin{matrix}
			0&\al\\\al&0
		\end{matrix}\right),\quad \al>0. \]
		\item	In dimension 3 there is only one metrised non Lie symmetric Leibniz algebra up to an isomorphism, namely $\R^3$ endowed with the bracket and the metrics
		\[ e_1\bullet e_1=\pm e_3\esp m=\left(\begin{matrix}
			0&\al&0\\\al&0&0\\0&0&\pm1
		\end{matrix}\right)
		,\quad\al>0. \]
		\item	In dimension 4,5 and 6, up to an isomorphism, all
		symmetric Leibniz algebra carrying an associative metric and obtained from an indecomposable quadratic Lie algebra are given in \ref{1}.
	\end{enumerate}
	
\end{theo}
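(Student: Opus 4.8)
The plan is to treat the three stated dimension cases separately, reducing each to the classification of quadratic Lie algebras and then applying Theorems \ref{commutative}, \ref{co}, \ref{dex} and Corollary \ref{definite}. The first observation, valid in every dimension, is that by the remark following Theorem \ref{co}, if the underlying Lie algebra $(\G,\br)$ satisfies $[\G,\G]=\G$ then $\om=0$ and $(\G,\bullet,\prs)$ is already a quadratic Lie algebra; hence a non Lie metrised symmetric Leibniz algebra forces $[\G,\G]\ne\G$, so $Z(\G)=[\G,\G]^\perp\ne\{0\}$, and moreover by Corollary \ref{definite} the restriction of $\prs$ to $Z(\G)$ cannot be definite. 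This already rules out most small quadratic Lie algebras and tells us exactly where to look: quadratic Lie algebras with a nonzero center on which the metric is either degenerate or of indefinite (non-Euclidean) signature.

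For dimensions $2$ and $3$ I would argue directly. In dimension $2$, the quadratic Lie algebras are the abelian one $\R^2$ and there is no non-abelian quadratic Lie algebra of dimension $2$; applying Theorem \ref{commutative} with $\dim\h=1$, $\dim A=0$ we get $\G=\R e_1\oplus\R e_2$ with $e_1\bullet e_1=\la e_2\in\h^*$, $e_2$ central, the metric $\prs_0$ pairing $e_1$ with $e_2$, i.e.\ $m=\left(\begin{matrix}0&\al\\\al&0\end{matrix}\right)$; rescaling $e_2$ normalizes $\la$ to $\pm1$ and fixes $\al>0$, and the sign $\pm$ cannot be absorbed since an isomorphism must preserve the metric up to the stated normalization — this yields exactly the listed algebra. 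In dimension $3$ the only indecomposable quadratic Lie algebra is $\mathrm{so}(3)$ or $\mathrm{sl}(2)$, both perfect, so by the first paragraph these contribute nothing non Lie; the decomposable ones with nontrivial center whose center is not definite are $\R^2_{(1,1)}\oplus\R$ and $\R^3$ with a degenerate center; running Theorem \ref{commutative}/\ref{dex} through these, the $\R^2$-factor case collapses to the dimension-$2$ example extended by an orthogonal definite line (which must be central by Corollary \ref{definite}-type reasoning and adds nothing), and the genuinely new possibility is again $\dim\h=1$, $\dim A=1$ giving $e_1\bullet e_1=\la e_3$, $e_2$ a definite central direction, producing $m=\left(\begin{matrix}0&\al&0\\\al&0&0\\0&0&\pm1\end{matrix}\right)$, $\la$ normalized to $\pm1$. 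One must also check that no double extension $\de_A(\h)$ with $\dim\h=1$ is new: here $\h=\R$ is one-dimensional so $\mathrm{so}(\h)=0$, $A=0$, and $\de_0(\R)$ is just abelian $\R^3$, already covered.

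For dimensions $4,5,6$ the argument is by enumeration over the known lists. I would cite \cite{ovando} and \cite{Baum} for the complete list of indecomposable quadratic Lie algebras of dimension $\le 6$, then for each such $(\h,\br_\h,\prs_\h)$ with $\dim\h\le 6$ determine its center $Z(\h)$; discard those with $[\h,\h]=\h$ (perfect ones) and those for which every choice would force $T=0$. For the rest, Theorem \ref{co} says the metrised symmetric Leibniz structures built on $(\h,\br_\h)$ are parametrized by a totally isotropic $I\subset Z(\h)$ and a symmetric $3$-form $T$ on $\h$ with $T(I^\perp,\cdot,\cdot)=0$, i.e.\ $T$ is really a symmetric $3$-form on $\h/I^\perp\cong I^*$; since $I$ is totally isotropic its dimension is small, so these $T$ are few and easy to list, and two choices give isomorphic algebras iff related by an automorphism of the quadratic Lie algebra, which one reads off from the structure. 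Collecting the normal forms gives Table \ref{1}.

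The main obstacle is purely bookkeeping in the last case: correctly matching each indecomposable quadratic Lie algebra of dimension $4,5,6$ from the published lists with its center, parametrizing the admissible $(I,T)$ pairs, and then quotienting by the automorphism group to avoid redundant entries in Table \ref{1}. The conceptual content is entirely contained in Theorems \ref{commutative}--\ref{dex} and Corollary \ref{definite}; what remains is a finite, if tedious, case check, which is why the statement can be asserted ``almost without any computation.''
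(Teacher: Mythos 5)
Your proposal follows essentially the same route as the paper: the paper's own justification is precisely to invoke the classification of (indecomposable) quadratic Lie algebras of dimension $\leq 6$ from \cite{ovando} and \cite{Baum}, discard the perfect ones via the remark after Theorem \ref{co} and those with definite center via Corollary \ref{definite}, and then read off the admissible pairs $(I,T)$ from Theorems \ref{commutative}--\ref{dex}. Your reconstruction of dimensions $2$ and $3$ and the enumeration strategy for dimensions $4$--$6$ match the paper's (largely implicit) argument, so the proposal is correct and not a genuinely different approach.
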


\section{$\Li$-quadratic and  $\Ri$-quadratic   left Leibniz algebras}\label{section4}

In this section, we give some immediate properties of $\Li$-quadratic and  $\Ri$-quadratic   left Leibniz algebras, we describe $\Li$-quadratic left Leibniz algebras with nondegenerate Leibniz ideal and we define the core of $\Li$-quadratic and  $\Ri$-quadratic   left Leibniz algebras.

\subsection{General properties of $\Li$-quadratic and  $\Ri$-quadratic   left Leibniz algebras}

A $\Li$-quadratic (resp. $\Ri$-quadratic) left Leibniz algebra is a pseudo-Euclidean left Leibniz algebra $(\G,\bullet,\prs)$ such that the metric is $\Li$-invariant (resp. $\Ri$-invariant). This means that for any $u\in\G$, $\Li_u^*=-\Li_u$ (resp. $\Ri_u^*=-\Ri_u$). 

According to Proposition \ref{invariant}, if a pseudo-Euclidean left Leibniz algebra is both $\Li$-quadratic and $\Ri$-quadratic then it is a Lie algebra.

The following example shows that the classes of $\Li$-quadratic or $\Ri$-quadratic  left Leibniz algebras are large. 
\begin{exem}\label{exem2}\begin{enumerate}\item Let $(\G,\br)$ be a Lie algebra. We denote by $\mathrm{ad}^*:\G\too\mathrm{End}(\G^*)$ its co-adjoint representation, i.e., $\prec \mathrm{ad}^*_u\al,v\succ=-\prec\al,[u, v]\succ$. We endow $T^*\G=\G\oplus\G^*$ by the bracket
		\[ (u+\al)\bullet(v+\be)=[u,v]+\mathrm{ad}^*_u\be,\quad u,v\in\G,\al,\be\in\G^* \]
		and its canonical neutral metric given by
		\[ \langle u+\al,v+\be\rangle=\prec\al,v\succ+\prec\be,u\succ. \] 
		Then $(T^*\G,\bullet,\prs)$ is a $\Li$-quadratic left Leibniz algebra. 
		
		\item Let, $(\G,\br)$ be a 2-step nilpotent Lie algebra. . We endow $T^*\G=\G\oplus\G^*$ by the bracket
		\[ (u+\al)\bullet(v+\be)=\mathrm{ad}^*_u\be,\quad u,v\in\G,\al,\be\in\G^* \]
		and its canonical neutral metric $\prs$. Then $(T^*\G,\bullet,\prs)$ is a $\Ri$-quadratic left Leibniz algebra.

	\end{enumerate}
	
\end{exem}

The following  proposition  can be easily established.
\begin{pr}\label{pr}Let $(\g,\bullet, \prs)$ be a pseudo-Euclidean left Leibniz algebra and $I$ an ideal of $\G$. Then the following assertions hold.
	\begin{enumerate}
		\item If  $\prs$ is $\Ll$-invariant $($resp.$\Rr$-invariant) then $Z^r(\g)=(\g\bullet \g)^\perp$ (resp.
		$Z^l(\g)=(\g\bullet \g)^\perp)$.
		\item If $\prs$ is $\Ll$-invariant, then $ I^{\perp}$ is a left ideal of $\g$ and $I\bullet  I^{\perp}=0$. Moreover, $I^{\perp}$ is an ideal of $\g$ if and only if  $ I^{\bot}\bullet  I=0$.
		\item If  $\prs$ is $\Rr$-invariant, then $ I^{\perp}$ is a right ideal of $\g$ and $I^{\perp}\bullet I=0$. Moreover, $ I^{\perp}$ is an ideal of $\g$ if and only if  $ I\bullet I^{\perp}=0$.

	\end{enumerate}
\end{pr}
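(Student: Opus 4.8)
The plan is to deduce all three items from one mechanism: invariance of $\prs$ converts a statement about a left (resp.\ right) multiplication into a statement about the orthogonal complement, and nondegeneracy of $\prs$ then upgrades ``orthogonal to all of $\g$'' to ``zero'' and ``orthogonal to $I$'' to ``lies in $I^\perp$''. The two identities I would record at the outset are $\langle u\bullet v,w\rangle=-\langle v,u\bullet w\rangle$ for $\Ll$-invariance (a factor may be moved across the pairing out of, or into, the left slot) and $\langle v\bullet u,w\rangle=-\langle v,w\bullet u\rangle$ for $\Rr$-invariance; I would also keep in mind throughout that $I$ being an ideal gives both $\g\bullet I\subset I$ and $I\bullet\g\subset I$.

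First I would do (1). Assuming $\prs$ is $\Ll$-invariant, for $u\in\g$ I expand $\langle v\bullet u,w\rangle=\langle\Li_v u,w\rangle=-\langle u,\Li_v w\rangle=-\langle u,v\bullet w\rangle$ for all $v,w$, so that $u\perp\g\bullet\g$ is equivalent to $\langle v\bullet u,w\rangle=0$ for all $v,w$, hence by nondegeneracy to $\Ri_u=0$, i.e.\ $u\in Z^r(\g)$; the $\Rr$-invariant case is the mirror statement, read off from the analogous expansion $\langle u,v\bullet w\rangle=\langle u,\Ri_w v\rangle=-\langle u\bullet w,v\rangle$, giving $Z^l(\g)$. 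For (2), with $\prs$ $\Ll$-invariant and $I$ an ideal, the workhorse is $\langle u\bullet x,a\rangle=-\langle x,u\bullet a\rangle$: taking $u\in\g$, $x\in I^\perp$, $a\in I$, the right side vanishes since $u\bullet a\in I$, so $u\bullet x\in I^\perp$ and $I^\perp$ is a left ideal; putting $a'\in I$ in place of $u$ and letting the second slot range over $\g$ gives $\langle a'\bullet x,w\rangle=-\langle x,a'\bullet w\rangle=0$, hence $I\bullet I^\perp=0$. For the equivalence: if $I^\perp\bullet I=0$ then $\langle x\bullet u,a\rangle=\langle\Li_x u,a\rangle=-\langle u,x\bullet a\rangle=0$, so $I^\perp$ is also a right ideal, hence two-sided; conversely, if $I^\perp$ is an ideal then $\langle x\bullet a,w\rangle=-\langle a,x\bullet w\rangle=0$ because $x\bullet w\in I^\perp$ (as $I^\perp$ is a right ideal) and $a\in I$, so $I^\perp\bullet I=0$.

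Item (3) I would handle by running exactly the same moves with $\Rr$-invariance in place of $\Ll$-invariance, using $\langle x\bullet u,a\rangle=\langle\Ri_u x,a\rangle=-\langle x,a\bullet u\rangle$ and $\langle u\bullet x,a\rangle=\langle\Ri_x u,a\rangle=-\langle u,a\bullet x\rangle$; this yields unconditionally that $I^\perp$ is a right ideal and that $I^\perp\bullet I=0$, while $I\bullet I^\perp=0$ is precisely the extra condition that is both necessary and sufficient for $I^\perp$ to be a left ideal as well. I do not expect a genuine obstacle here: every assertion reduces to a one-line manipulation followed by an appeal to nondegeneracy. The only thing I would be careful about is the left/right bookkeeping — which invariance is available ($\Ll$- versus $\Rr$-), which slot of $\prs$ it acts on, and whether a given inclusion uses $I$ as a left or as a right ideal — since it is easy to invoke the wrong one and that is where a slip would occur.
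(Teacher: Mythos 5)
Your proof is correct, and since the paper itself omits the argument (stating only that the proposition ``can be easily established''), your one-line manipulations with $\Li$/$\Ri$ skew-symmetry plus nondegeneracy are exactly the intended route. The only minor blemish is some loose variable naming in the identities quoted for item (3), but the mechanism and all conclusions there are right.
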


The following proposition will be useful later.
\begin{pr}\label{leib}Let $(\g,\bullet, \prs)$ be a pseudo-Euclidean left Leibniz algebra. Then the following assertions hold.
	\begin{enumerate}
		\item If  $\prs$ is $\Ll$-invariant  then
		$
		\Lei(\g)^\perp=\{ u\in\G, \mathrm{R}_u +\mathrm{R}_u^*=0 \}. 
		$ Moreover, $\Lei(\g)^\perp$ is a left ideal,  $I=\Lei(\G)+\Lei(\g)^\perp$ is an ideal and $I\bullet I^\perp=0$
		\item If  $\prs$ is $\Rr$-invariant  then
		$
		\Lei(\G)^\perp=\{ u\in\G, \mathrm{L}_u +\mathrm{L}_u^*=0 \}.
		$
		Moreover, $\Lei(\G)\subset \g\bullet \g \subset \Lei(\G)^\perp$. Thus, $\Lei(\G)$ is a totally isotropic  ideal of $\g$,  $\Lei(\G)^\perp$ is an ideal and $\Lei(\G)^\perp\bullet \Lei(\G)=0$.
	\end{enumerate}
\end{pr}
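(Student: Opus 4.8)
The plan is to treat the two assertions in parallel, since they are dual to one another under the left/right interchange. For assertion (1), assume $\prs$ is $\Ll$-invariant. First I would establish the characterization $\Lei(\g)^\perp=\{u\in\G:\Rr_u+\Rr_u^*=0\}$. For this, fix $u\in\G$ and note that $u\in\Lei(\g)^\perp$ means $\langle u, v\bullet w+w\bullet v\rangle=0$ for all $v,w$, i.e. $\langle \Rr_w u, v\rangle+\langle \Rr_v u, w\rangle=0$. Using $\Ll$-invariance to move things around, $\langle \Rr_w u,v\rangle=\langle v\bullet u,w\rangle=-\langle u, v\bullet w\rangle=-\langle u,\Rr_w v\rangle=-\langle \Rr_w^* u,v\rangle$; symmetry in the roles of $v,w$ in the defining condition then collapses to $\langle(\Rr_w+\Rr_w^*)u,v\rangle=0$ for all $v,w$, which is exactly $\Rr_u+\Rr_u^*=0$ once one rewrites $\langle \Rr_w u,v\rangle=\langle \Rr_v u,w\rangle$ (here using that $w\bullet u$ paired with $v$ equals $u$ paired with $v\bullet w$, and matching up indices). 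I would write this chain of equalities carefully so the bookkeeping of which variable is the "acting" one is transparent.

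Next, $\Lei(\g)^\perp$ is a left ideal: this is immediate from Proposition \ref{pr}(2) applied to the ideal $I=\Lei(\g)$, since $\Lei(\g)$ is an ideal (recalled from \cite{ay}) and $\prs$ is $\Ll$-invariant, so $\Lei(\g)^\perp$ is a left ideal and moreover $\Lei(\g)\bullet\Lei(\g)^\perp=0$. Then set $I=\Lei(\g)+\Lei(\g)^\perp$. To see $I$ is an ideal it suffices to check $\G\bullet\Lei(\g)^\perp\subset I$ and $\Lei(\g)^\perp\bullet\G\subset I$; the first holds since $\Lei(\g)^\perp$ is a left ideal. For the second, $\Lei(\g)^\perp\bullet\G=\Rr_{\G}(\Lei(\g)^\perp)$, and using $u\bullet v+v\bullet u\in\Lei(\g)\subset I$ together with $\G\bullet\Lei(\g)^\perp\subset\Lei(\g)^\perp\subset I$ gives $u\bullet v=-(v\bullet u)+(u\bullet v+v\bullet u)\in I$ for $u\in\Lei(\g)^\perp$. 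Finally $I\bullet I^\perp=0$: note $I^\perp=\Lei(\g)^\perp\cap(\Lei(\g)^\perp)^\perp=\Lei(\g)^\perp\cap\overline{\Lei(\g)}$ where $\overline{\Lei(\g)}$ denotes the double orthogonal, which equals $\Lei(\g)$ since the metric is nondegenerate; so $I^\perp\subset\Lei(\g)$ and $I^\perp\subset\Lei(\g)^\perp$. Then for $x\in I$ and $y\in I^\perp$: if $x\in\Lei(\g)$ then $\Ll_x=0$ so $x\bullet y=0$; if $x\in\Lei(\g)^\perp$ then $x\bullet y\in\Lei(\g)^\perp\bullet\Lei(\g)$, and since $y\in\Lei(\g)$, $\Ll_y=0$ so $x\bullet y=-(y\bullet x)+(x\bullet y+y\bullet x)$ and $y\bullet x=\Ll_y x=0$ while $x\bullet y+y\bullet x\in\Lei(\g)$; but also $x\bullet y\in\Lei(\g)^\perp$ as $\Lei(\g)^\perp$ is a left ideal and... — here I would instead argue directly that $\Lei(\g)^\perp\bullet\Lei(\g)\subset\Lei(\g)\cap\Lei(\g)^\perp=I^\perp$ is automatic from being a left ideal plus the $u\bullet v+v\bullet u$ trick, and that this product lands in the radical so squares to zero; cleanest is to observe $\langle x\bullet y,z\rangle=-\langle y,x\bullet z\rangle=0$ for all $z$ because $x\bullet z\in\Lei(\g)^\perp$ and $y\in\Lei(\g)=(\Lei(\g)^\perp)^\perp$, giving $x\bullet y=0$ by nondegeneracy. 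That last computation is the slick way and I would use it.

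For assertion (2), assume $\prs$ is $\Rr$-invariant. The characterization $\Lei(\G)^\perp=\{u:\Ll_u+\Ll_u^*=0\}$ runs analogously: $u\in\Lei(\G)^\perp$ iff $\langle u,v\bullet w+w\bullet v\rangle=0$ for all $v,w$, and $\Rr$-invariance ($\langle v\bullet w,u\rangle=-\langle v,u\bullet w\rangle$... ) converts this to $\langle(\Ll_v+\Ll_v^*)u,w\rangle=0$; I would spell out the index matching. The inclusion $\Lei(\G)\subset\G\bullet\G$ is definitional. For $\G\bullet\G\subset\Lei(\G)^\perp$: by Proposition \ref{pr}(1), $\Rr$-invariance gives $Z^\ell(\G)=(\G\bullet\G)^\perp$; and one checks $\Lei(\G)\subset Z^\ell(\G)$ since $\Ll_u=0$ for $u\in\Lei(\G)$ (recalled from \cite{ay}), so dualizing, $\G\bullet\G=(Z^\ell(\G))^\perp\subset\Lei(\G)^\perp$. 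This also shows $\Lei(\G)\subset\Lei(\G)^\perp$, i.e. $\Lei(\G)$ is totally isotropic. Then $\Lei(\G)^\perp$ is an ideal and $\Lei(\G)^\perp\bullet\Lei(\G)=0$ by Proposition \ref{pr}(3) applied to $I=\Lei(\G)$, once we verify $\Lei(\G)\bullet\Lei(\G)^\perp=0$: but $\Lei(\G)\bullet\anything=\Ll_{\Lei(\G)}(\cdot)=0$. So Proposition \ref{pr}(3) gives both that $\Lei(\G)^\perp$ is a (two-sided) ideal and that $\Lei(\G)^\perp\bullet\Lei(\G)=0$.

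The main obstacle I anticipate is purely notational: getting the index bookkeeping right in the two characterizations of $\Lei(\cdot)^\perp$, i.e. making sure that when one writes the pairing $\langle u, v\bullet w + w\bullet v\rangle$ and pushes the multiplications across via invariance, the operator that emerges ($\Rr_u+\Rr_u^*$ in case (1), $\Ll_u+\Ll_u^*$ in case (2)) is the one acting on $u$ and not on the test vectors. Everything else reduces to combining Propositions \ref{invariant}, \ref{pr}, the known facts about $\Lei(\G)$ from \cite{ay} ($\Lei(\G)$ is an ideal, $\Ll_u=0$ on it), and nondegeneracy of the metric (to pass between an ideal and its double orthogonal). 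No genuinely hard step is involved; the proposition is a packaging of these ingredients.
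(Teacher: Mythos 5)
Your proposal is correct and follows essentially the same route as the paper: the same orthogonality computations for the two characterizations of $\Lei(\G)^\perp$, the same appeal to Proposition \ref{pr} for the ideal properties, and the same use of $\Li_u=0$ on $\Lei(\G)$. The only slip is in the last step of part (1), where you assert $x\bullet z\in\Lei(\G)^\perp$ for $x\in I$; what actually follows from what you have established is only $x\bullet z\in I$ (since $I$ is an ideal), but the pairing $\langle y,x\bullet z\rangle$ still vanishes because $y\in I^\perp=\Lei(\G)\cap\Lei(\G)^\perp$, so the conclusion $I\bullet I^\perp=0$ stands.
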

\begin{proof}\begin{enumerate}\item
	Let $u\in \Lei(\G)^\perp$. We have, for any $v,w\in\G$,
	$$
	0=\langle u,v\bullet w+w\bullet v\rangle\\
	=-\langle v\bullet u,w\rangle-\langle w\bullet u,v\rangle\\
	=-\langle\Ri_uv,w\rangle-\langle\Ri_uw,v\rangle
	$$and hence $\Ri_u^*=-\Ri_u$. According to Proposition \ref{pr}, $\Lei(\G)^\perp$ is a left ideal. Put $I=\Lei(\g)+\Lei(\g)^\perp$. Then $I$ is a left ideal,  $I^\perp=\Lei(\g)\cap\Lei(\g)^\perp$ and, for any $u\in\G$, $v\in I$ and $w\in I^\perp$,
	\[ \langle v\bullet u,w\rangle =-\langle u,v\bullet w\rangle=\langle u\bullet w,v\rangle=0 \]since $I^\perp$ is a left ideal and $\Ri_w^*=-\Ri_w$. Hence $I$ is an ideal and $I\bullet I^\perp=0$, according to Proposition \ref{pr}.
	
\item 	The first part of the second assertion can be obtained similarly. Since $\Lei(\G)$ is an ideal, $\Lei(\G)^\perp\bullet\Lei(\G)=0$ by virtue of Proposition \ref{pr}.

On the other hand, recall that  for any $u\in\Lei(\G)$, $\Li_u=0$. 
Then  for $v,w\in\G$ and $u\in\Lei(\G)$,
	\[ \langle u,v\bullet w\rangle=-\langle u\bullet w,v\rangle=0 \]so $\Lei(\G)\subset(\G\bullet\G)^\perp$ which completes the proof since obviously $\Lei(\G)\subset \G\bullet\G$.\qedhere
\end{enumerate}
\end{proof}
The following result is an immediate consequence of Proposition \ref{leib}.
\begin{pr} A $\Ri$-quadratic Euclidean left Leibniz algebra is necessarily a Lie algebra.
	
\end{pr}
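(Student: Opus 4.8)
The plan is to read off the conclusion directly from Proposition \ref{leib}(2). Since $\prs$ is $\Ri$-invariant, that proposition gives the chain of inclusions $\Lei(\G)\subset\G\bullet\G\subset\Lei(\G)^\perp$; in particular $\Lei(\G)\subset\Lei(\G)^\perp$, i.e. the Leibniz ideal is totally isotropic for $\prs$.

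Now I would invoke the hypothesis that $\prs$ is Euclidean, that is, definite positive. A definite positive symmetric bilinear form admits no nonzero totally isotropic subspace: if $u\in\Lei(\G)$ then $\langle u,u\rangle=0$ forces $u=0$. Hence $\Lei(\G)=\{0\}$.

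Finally I would recall the well-known fact (stated just after the definition of $\Lei(\G)$ in Section \ref{section2}, following \cite{ay}) that a left Leibniz algebra $\G$ is a Lie algebra if and only if $\Lei(\G)=\{0\}$. Applying this gives that $(\G,\bullet)$ is a Lie algebra, and then $\prs$, being $\Ri$-invariant for an anti-commutative product, is automatically bi-invariant, so $(\G,\bullet,\prs)$ is in fact a quadratic Lie algebra.

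There is essentially no obstacle here: the statement is an immediate corollary of Proposition \ref{leib}(2) combined with the elementary observation that Euclidean metrics have trivial isotropic cone. The only thing to be careful about is to cite the right part of Proposition \ref{leib} (the $\Ri$-invariant case) and the characterization $\G$ Lie $\iff\Lei(\G)=\{0\}$; no computation is needed.
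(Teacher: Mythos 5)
Your proof is correct and is exactly the argument the paper intends: the proposition is stated as an immediate consequence of Proposition \ref{leib}(2), namely that $\Lei(\G)\subset\Lei(\G)^\perp$ forces $\Lei(\G)=\{0\}$ when the metric is definite positive, hence $\G$ is a Lie algebra. No difference from the paper's (unwritten) proof.
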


\subsection{$\Li$-quadratic left Leibniz algebras with nondegenerate Leibniz ideal}

$\Li$-quadratic left  Leibniz algebras with nondegenerate Leibniz ideal are quite easy to describe. We call them nondegenerate 
$\Li$-quadratic left Leibniz algebras. 

Let $(\G,\bullet,\prs)$ be a $\Li$-quadratic left Leibniz algebra such that $\Lei(\G)$ is nondegenerate. According to Proposition \ref{leib}, 
$\g= \Lei(\g) \oplus \Lei(\g)^{\perp}$, where $(\Lei(\g)^{\perp},\bullet,\prs)$ is a quadratic Lie algebra and there is a representation $\rho : \Lei(\g)^{\perp} \too \mathrm{so}(\Lei(\g),\prs)$ given by $\rho(u)(v)=u\bullet v$.

Conversely, let $(\h,\prs_\h)$ be  a quadratic Lie algebra,  $(N,\prs_N)$ a pseudo-Euclidean vector space and $\rho:\h\too \mathrm{so}(N,\prs_N)$ a representation of $\h$. We define on $\G=\h\oplus N$ the product
\[ u\bullet v=\begin{cases}0,\;\mbox{if}\; u\in N,\\
	[u,v]_\h\; \mbox{if}\; u,v\in \h,\\
	\rho_u(v)\;\mbox{if}\; u\in \h,v\in N,
\end{cases} \]and the metric $\prs=\prs_\h\oplus\prs_N$. Then $(\G,\bullet,\prs)$ is a $\Li$-quadratic left Leibniz algebra.

\subsection{The core of $\Li$-quadratic and $\Ri$-quadratic left Leibniz algebras}

In this subsection, we construct two associated algebras for any $\Li$-quadratic (resp. $\Ri$-quadratic) left Leibniz algebra: a Lie algebra and a nondegenerate $\Li$-quadratic left Leibniz algebra (resp. a quadratic Lie algebra). We call these algebras  {\it the core} of the $\Li$-quadratic (resp. $\Ri$-quadratic) left Leibniz algebra and they will be used in Sections \ref{section5} and \ref{section5bis} to reconstruct the original algebra.

\begin{pr}\label{core} Let $(\G,\bullet,\prs)$ be a $\Li$-quadratic (resp. $\Ri$-quadratic) left Leibniz algebra and denote by $I=\Lei(\G)\cap \Lei(\G)^\perp$ (resp. $I=\Lei(\G)$), $A=I^\perp/I$ and $\h=\G/I^\perp$. Let $\pi_A:I^\perp\too A$ and $\pi_\h:\G\too \h$
	the canonical projections.
	Then the following assertions hold.
	\begin{enumerate}
		\item If the metric is $\Li$-quadratic then $A$ has a structure of $\Li$-quadratic left Leibniz  algebra given by
		\[ \pi_A(u)\bullet_A\pi_A(v)=\pi_A(u\bullet v)\esp \langle \pi_A(u),\pi_A(v)\rangle_A=\langle u,v\rangle \]and $\Lei(A)=\pi_A(\Lei(\G))$ is nondegenerate and $$A=\Lei(A)\stackrel{\perp}\oplus \pi_A(\Lei(\G)^\perp)$$ where $\pi_A(\Lei(\G)^\perp)$ is a quadratic Lie algebra.
		\item If the metric is $\Ri$-quadratic then $A$ has a structure of  a quadratic Lie  algebra given by
		\[ [\pi_A(u),\pi_A(v)]_A=\pi_A(u\bullet v)\esp \langle \pi_A(u),\pi_A(v)\rangle_A=\langle u,v\rangle. \]
		
		\item If the metric is $\Li$-invariant then $I^\perp$ is an ideal of $\G$ and $\h=\G/I^\perp$ has a structure $\br_\h$ of a Lie algebra.
		
		\item  If the metric is $\Ri$-invariant then the bracket $\br_\h$ on $\h$ given by the formula
		\begin{equation}\label{sub} [\pi_\h(u),\pi_\h(v)]_\h=-\pi_\h(\Li_v^*u) \end{equation}
		is well-defined and it is a Lie bracket and
		 for any $X,Y,Z\in\h$,
		 \[ [[X,[Y,Z]_\h]]_\h=0. \]	
	\end{enumerate}
	
\end{pr}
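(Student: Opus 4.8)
The plan is to treat the four assertions in turn, in each case first checking that the proposed operation is well-defined on the quotient (i.e., independent of the choice of representatives) and then verifying the claimed algebraic structure. The two key structural inputs are Proposition \ref{leib} (which identifies $\Lei(\G)^\perp$ and shows that $I=\Lei(\G)+\Lei(\G)^\perp$ is an ideal with $I\bullet I^\perp=0$ in the $\Li$-invariant case, and that $\Lei(\G)^\perp$ is an ideal with $\Lei(\G)\subset\G\bullet\G\subset\Lei(\G)^\perp$ in the $\Ri$-invariant case) and Proposition \ref{pr} (relating orthogonals of ideals to one-sided ideals). I would also keep in mind the identity $\Li_u=\ad_u+\sd_u$, $\Ri_u=-\ad_u+\sd_u$ and the fact that $\Li$-invariance means $\Li_u^*=-\Li_u$.

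For assertion (1), in the $\Li$-invariant case set $I=\Lei(\G)\cap\Lei(\G)^\perp$, so that $I^\perp=\Lei(\G)+\Lei(\G)^\perp$ is an ideal of $\G$ by Proposition \ref{leib}(1). To see $\bullet_A$ is well-defined I must show $I\bullet I^\perp\subset I$ and $I^\perp\bullet I\subset I$; the first is $I\bullet I^\perp=0$ from Proposition \ref{leib}(1), and the second holds because $I$ is an ideal, so $I^\perp\bullet I\subset I^\perp\cap(\text{right ideal generated by }I)$ — more carefully, $I^\perp$ is a left ideal and $I$ is an ideal, hence $I^\perp\bullet I\subset I$. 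Then $(I^\perp/I,\bullet_A)$ inherits the left Leibniz identity since it is a quotient of the subalgebra $I^\perp$. The metric $\langle\;,\;\rangle_A$ is well-defined because $I=(I^\perp)^\perp\cap I^\perp$ is exactly the radical of the restriction of $\prs$ to $I^\perp$, so the induced form on $I^\perp/I$ is nondegenerate; $\Li$-invariance descends directly. Finally, $\pi_A(\Lei(\G))$: one checks $\Lei(A)=\pi_A(\Lei(\G))$ (the Leibniz ideal is generated by symmetrized products, which project onto symmetrized products), and its nondegeneracy follows from $\pi_A(\Lei(\G))^\perp=\pi_A(\Lei(\G)^\perp)$ together with $\Lei(\G)\cap\Lei(\G)^\perp=I$, so that $\Lei(A)\cap\Lei(A)^\perp=\{0\}$; then apply the nondegenerate case from the previous subsection to split $A=\Lei(A)\stackrel{\perp}{\oplus}\pi_A(\Lei(\G)^\perp)$ with the second summand a quadratic Lie algebra.

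For assertion (2), with $I=\Lei(\G)$ and $\prs$ $\Ri$-invariant, Proposition \ref{leib}(2) gives $\Lei(\G)\subset\G\bullet\G\subset\Lei(\G)^\perp$, so $I$ is totally isotropic and $I^\perp=\Lei(\G)^\perp$ is an ideal with $I^\perp\bullet I=0$. For $[\;,\;]_A$ to be well-defined as written I need $\pi_A(u\bullet v)$ to depend only on $\pi_A(u),\pi_A(v)$: modifying $u$ by an element of $I$ changes $u\bullet v$ by $I\bullet v\subset I$ (as $I$ is an ideal), and modifying $v$ by $I$ changes $u\bullet v$ by $u\bullet I\subset I^\perp\bullet I=0\subset I$; I also need the result in $I^\perp/I$, which holds since $I^\perp\bullet I^\perp\subset I^\perp$. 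Skew-symmetry of $[\;,\;]_A$: for $u,v\in I^\perp$, $u\bullet v+v\bullet u\in\Lei(\G)=I$, so it vanishes mod $I$; the Jacobi identity then follows from the left Leibniz identity modulo $I$ (antisymmetry upgrades left Leibniz to Jacobi). Nondegeneracy and $\Ri$-invariance of $\langle\;,\;\rangle_A$ descend as in (1), and $\Ri$-invariance of an anticommutative bracket is just the quadratic condition, so $A$ is a quadratic Lie algebra.

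For assertion (3), $I^\perp$ is an ideal by Proposition \ref{leib}(1), so $\h=\G/I^\perp$ is a quotient left Leibniz algebra; but $\G\bullet\G$ maps into... — here one uses that $\Lei(\G)\subset I^\perp$ (clear since $I\subset\Lei(\G)$ gives $\Lei(\G)^\perp\subset I^\perp$, hence $\Lei(\G)=\Lei(\G)^{\perp\perp}\supset\dots$; more directly $\Lei(\G)\subset\Lei(\G)+\Lei(\G)^\perp=I^\perp$), so that the symmetrized products $u\bullet v+v\bullet u$ vanish in $\h$, i.e. $\Lei(\h)=0$, and a left Leibniz algebra with trivial Leibniz ideal is a Lie algebra; denote the bracket $\br_\h$. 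For assertion (4), the main point — and the step I expect to be the principal obstacle — is showing that \eqref{sub}, $[\pi_\h(u),\pi_\h(v)]_\h=-\pi_\h(\Li_v^*u)$, is well-defined: I must check that $\Li_v^*u$ modulo $I^\perp=\Lei(\G)^\perp$ depends only on $\pi_\h(u)$ and $\pi_\h(v)$. Changing $u$ by $w\in\Lei(\G)^\perp$: need $\Li_v^*w\in\Lei(\G)^\perp$, which should follow from $\Lei(\G)^\perp\bullet\Lei(\G)=0$ (Proposition \ref{leib}(2)) plus $\Ri$-invariance — compute $\langle\Li_v^*w,x\rangle=\langle w,v\bullet x\rangle$ for $x\in\Lei(\G)$ and use that $v\bullet x\in\Lei(\G)$ since $\Lei(\G)$ is an ideal. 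Changing $v$ by $w\in\Lei(\G)^\perp$: need $\Li_w^*u\in\Lei(\G)^\perp$, i.e. for $x\in\Lei(\G)$, $\langle\Li_w^*u,x\rangle=\langle u,w\bullet x\rangle$, and $w\bullet x\in\Lei(\G)^\perp\bullet\Lei(\G)=0$. Once well-definedness is secured, skew-symmetry of $\br_\h$ comes from $\Li_v^*u+\Li_u^*v$ being symmetric-related to $\Ri$-invariance modulo $\Lei(\G)^\perp$ (using $\Li_u^*=\Li_u$ would be false; rather one uses $-\Ri_u^*=\Ri_u$ and the relation $\Li_u+\Ri_u=2\sd_u$), and the Jacobi-type identity $[X,[Y,Z]_\h]_\h=0$ — note it is this weaker identity, not full Jacobi, that is claimed — is obtained by expanding $\Li^*$ of the left Leibniz identity \eqref{l1} and projecting; I would carry this out by dualizing $\Li_{u\bullet v}=[\Li_u,\Li_v]$ and reading off the relation in $\h$, with the expectation that the terms landing in $\Lei(\G)^\perp$ are exactly what forces the vanishing. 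The genuinely delicate bookkeeping is this last identity together with the two well-definedness checks in (4); the rest is routine quotient-algebra verification.
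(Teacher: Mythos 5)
Your treatment of assertions (1)--(3) and of the well-definedness and skew-symmetry of $\br_\h$ in assertion (4) matches the paper's argument in substance: the same inputs from Proposition \ref{leib} are used, the quotient structures are obtained the same way, and your direct computation $\langle \Li_v^*w,x\rangle=\langle w,v\bullet x\rangle$ with $v\bullet x\in\Lei(\G)$ is a perfectly good (slightly more economical) variant of the paper's check that $\Li_u^*v\in I^\perp$ for $u\in I^\perp$.

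The gap is in the last claim of assertion (4), the identity $[X,[Y,Z]_\h]_\h=0$, which is the only genuinely nontrivial step of the proposition and which you leave as an ``expectation'' about how the dualized Leibniz identity behaves. That expectation is not accurate: dualizing $\Li_{v\bullet u}=[\Li_v,\Li_u]$ gives $\Li_u^*\Li_v^*w=\Li_v^*\Li_u^*w+\Li_{v\bullet u}^*w$, and the term $\Li_{v\bullet u}^*w$ does land in $I^\perp$ (because for $t\in\Lei(\G)$ one has $\langle \Li_{v\bullet u}^*w,t\rangle=\langle w,(v\bullet u)\bullet t\rangle=-\langle w\bullet t,v\bullet u\rangle=0$, using $\Ri$-invariance, that $\Lei(\G)$ is an ideal, and $\Lei(\G)\subset(\G\bullet\G)^\perp$ from Proposition \ref{leib}(2)) --- but killing that term only yields the \emph{symmetry} $[X,[Y,Z]_\h]_\h=[Y,[X,Z]_\h]_\h$, not the vanishing. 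The vanishing requires a further combinatorial step that your sketch does not contain: combining this symmetry in the first two arguments with the skew-symmetry of $\br_\h$ via the chain
\[ [X,[Y,Z]_\h]_\h=[Y,[X,Z]_\h]_\h=-[Y,[Z,X]_\h]_\h=-[Z,[Y,X]_\h]_\h=[Z,[X,Y]_\h]_\h=[X,[Z,Y]_\h]_\h=-[X,[Y,Z]_\h]_\h, \]
whence $2[X,[Y,Z]_\h]_\h=0$. Without this step the argument stops at a weaker conclusion, so you should add it explicitly.
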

\begin{proof} 
	\begin{enumerate}
		\item According to Proposition \ref{leib}, $\Lei(\G)$ is an ideal and $\Lei(\G)^\perp$ is a left ideal and hence  $I^\perp$ is a subalgebra of $\G$ and $I$ is an ideal in $I^\perp$. This implies that $A$ has a structure of left Leibniz algebra and the other claims follow immediately.
		\item According to Proposition \ref{leib}, both $\Lei(\G)$ and $\Lei(\G)^\perp$ are ideal and $\Lei(\G)\subset \Lei(\G)^\perp$ this shows that $A$ has a structure of left Leibniz algebra. Moreover, for any $u\in\Lei(\G)^\perp$, $u\bullet u\in\Lei(\G)$ and hence $\pi_A(u\bullet u)=0$ which shows that $A$ is actually a Lie algebra.
		\item We have seen in Proposition \ref{leib} that $I^\perp$ is an ideal and hence $\G/I^\perp$ has a structure of left Leibniz algebra. Moreover, for any $u\in\G$, $u\bullet u\in\Lei(\G)\subset I^\perp$ and hence $\pi_\h(u\bullet u)=0$ so the quotient is actually a Lie algebra.
		\item We have seen in Proposition \ref{leib} that $\Lei(\G)^\perp$ is an ideal which contains $\G\bullet\G$ and hence the quotient $\G/\Lei(\G)^\perp$ is trivial. However, we will show that the bracket $\br_\h$ is a Lie bracket which is not, in general,  the quotient bracket.
		
		Note first that for any $u,v,w\in\G$,
		\[ \langle \Li_u^*v,w\rangle =\langle v,u\bullet w\rangle =-\langle v\bullet w,u\rangle=-\langle \Li_v^*u,w\rangle \]and hence $\Li_u^*v=-\Li_v^*u$. Moreover, for any $u\in I^\perp$, $v\in\G$ and $w\in I$,
		\[ \langle \Li_u^*v,w\rangle=\langle v,u\bullet w\rangle=-\langle v\bullet w,u\rangle=0 \]since $I=\Lei(\G)$ is an ideal. So we have shown that $\br_\h$ is well-defined and skew-symmetric.
		
		Let $X=\pi_\h(u),Y=\pi_\h(v),Z=\pi_\h(w)$. We have
		\[ [X,[Y,Z]_\h]_\h=\pi_\h(\Li_u^*\circ\Li_v^*w). \]
		By virtue of \eqref{l1},
		\[ \Li_u^*\circ\Li_v^*w=\Li_v^*\circ\Li_u^*w-\Li_{v\bullet u}^*w. \]
		But for any $t\in I=\Lei(\G)$, we have
		\[ \langle\Li_{v\bullet u}^*w,t\rangle=\langle w,(v\bullet u)\bullet t\rangle=-\langle w\bullet t,v\bullet u\rangle=0 \]since we have seen in Proposition \ref{leib} that $I$ is an ideal and $I\subset(\G\bullet\G)^\perp$. So we get that for any $X,Y,Z\in\h$,
		\[ [X,[Y,Z]_\h]_\h=[Y,[X,Z]_\h]_\h. \]
		This relation combined to the skew-symmetry of $\br_\h$ will show that $[X,[Y,Z]_\h]_\h=0$. Indeed,
		\begin{align*}
			[X,[Y,Z]_\h]_\h&=[Y,[X,Z]_\h]_\h\\
			&=-[Y,[Z,X]_\h]_\h\\
			&=-[Z,[Y,X]_\h]_\h\\
			&=[Z,[X,Y]_\h]_\h\\
			&=[X,[Z,Y]_\h]_\h\\
			&=-[X,[Y,Z]_\h]_\h
		\end{align*}which completes the proof.\qedhere
		
	\end{enumerate}

\end{proof}

\begin{Def}\label{def} Let $(\G,\bullet,\prs)$ be a $\Li$-quadratic  (resp. $\Ri$-quadratic) left Leibniz algebras. We call the $\Li$-quadratic left Leibniz algebra $(A,\bullet_A,\prs_A)$ and the Lie algebra $(\h,\br_\h)$ (resp. the quadratic Lie algebra $(A,\br_A,\prs_A)$ and the Lie algebra $(\h,\br_\h)$)  defined in Proposition \ref{core} the core of $(\G,\bullet,\prs)$.
	
\end{Def}

Actually, we can go further and give more information about the core of a $\Li$-quadratic or $\Ri$-quadratic left Leibniz algebra.

\begin{pr}\label{spliting}
 Let $(\G,\bullet,\prs)$ be a pseudo-Euclidean left Leibniz algebra. Denote by $\Lie(\G)=\G/\Lei(\G)$ the quotient Lie algebra and $\pi:\G\too \Lie(\G)$ the canonical projection.

\begin{enumerate}
	\item If $(\G,\bullet,\prs)$ is $\Li$-quadratic then
	the map $\kappa:A\too\Lie(\G)$ is a well-defined morphism of left algebras, $\ker\kappa=\pi_A(\Lei(\G))$ and $\pi_A(\Lei(\G)^\perp)$ with an ideal of $\Lie(\G)$ and the Lie algebra quotient $\Lie(\G)/\pi_A(\Lei(\G)^\perp)$ is isomorphic to $(\h,\br_\h)$
	\item If $(\G,\bullet,\prs)$ is $\Ri$-quadratic then the map $\kappa:A\too\Lie(\G)$ is a well-defined into morphism of Lie algebras and $\kappa(A)$ is an ideal of $\Lie(\G)$ which contains $[\Lie(\G),\Lie(\G)]$. 
\end{enumerate}

\end{pr}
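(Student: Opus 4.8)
The plan is to exhibit $\kappa$ as the map induced on $A=I^\perp/I$ by the composition $I^\perp\hookrightarrow\G\stackrel{\pi}{\too}\Lie(\G)$, and then to obtain every assertion by elementary kernel/image bookkeeping, invoking only the structural facts already proved in Propositions \ref{leib} and \ref{core}.

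First I would define $\kappa(\pi_A(u))=\pi(u)$ for $u\in I^\perp$. This is well defined and linear because $I$ is annihilated by $\pi|_{I^\perp}$: indeed $I\subset\Lei(\G)=\ker\pi$ trivially in both cases, since $I$ equals either $\Lei(\G)\cap\Lei(\G)^\perp$ or $\Lei(\G)$. It is a morphism of left algebras because $\pi_A(u)\bullet_A\pi_A(v)=\pi_A(u\bullet v)$ and $[\pi(u),\pi(v)]=\pi(u\bullet v)$, so both sides of the morphism identity evaluate to $\pi(u\bullet v)$; in the $\Ri$-quadratic case $A$ is the Lie algebra of Proposition \ref{core}(2), so $\kappa$ is then a morphism of Lie algebras.

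Next I would compute kernel and image. For $u\in I^\perp$ we have $\kappa(\pi_A(u))=0$ iff $u\in\Lei(\G)$, and $\Lei(\G)\subset I^\perp$ in both cases: in the $\Li$-invariant case $I^\perp=\Lei(\G)+\Lei(\G)^\perp$, while in the $\Ri$-invariant case $\Lei(\G)$ is totally isotropic by Proposition \ref{leib}, hence $\Lei(\G)\subset\Lei(\G)^\perp=I^\perp$. Therefore $\ker\kappa=\pi_A(\Lei(\G))$. In case (1) this is $\Lei(A)$, which is nondegenerate by Proposition \ref{core}(1); in case (2) it is $\Lei(\G)/I=\{0\}$ since $I=\Lei(\G)$, so $\kappa$ is injective ("into"). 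For the image, $\kappa(A)=\pi(I^\perp)$. In case (1), $\pi(I^\perp)=\pi(\Lei(\G)^\perp)$ as $\pi$ kills $\Lei(\G)$; this is an ideal of $\Lie(\G)$ because $I^\perp$ is an ideal of $\G$ (Proposition \ref{core}(3)), and the isomorphism theorem yields $\Lie(\G)/\pi(\Lei(\G)^\perp)\cong\G/I^\perp$, which is $(\h,\br_\h)$ as a Lie algebra since $\br_\h$ is precisely the quotient bracket in the $\Li$-invariant case. The splitting $A=\Lei(A)\oplus\pi_A(\Lei(\G)^\perp)$ of Proposition \ref{core}(1) makes $\pi_A(\Lei(\G)^\perp)$ a complement to $\ker\kappa$, so $\kappa$ maps it isomorphically onto that ideal, which is the sense of "$\pi_A(\Lei(\G)^\perp)$ is an ideal of $\Lie(\G)$". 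In case (2), $\kappa(A)=\pi(\Lei(\G)^\perp)$ is an ideal of $\Lie(\G)$ because $\Lei(\G)^\perp$ is an ideal of $\G$ (Proposition \ref{leib}), and it contains $[\Lie(\G),\Lie(\G)]=\pi(\G\bullet\G)$ because $\G\bullet\G\subset\Lei(\G)^\perp$ by the same proposition.

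No step here is genuinely hard; the whole argument is bookkeeping with orthogonals, ideals and quotients. The only real subtlety, in part (1), is that $\pi_A(\Lei(\G)^\perp)$ a priori lives inside $A$ rather than $\Lie(\G)$, so its identification with an honest ideal of $\Lie(\G)$ relies on the decomposition of $A$ from Proposition \ref{core}(1) making $\kappa$ injective on that summand. Beyond that, one just has to keep straight that Proposition \ref{leib} uses $I$ for $\Lei(\G)+\Lei(\G)^\perp$ while Proposition \ref{core} uses $I$ for $\Lei(\G)\cap\Lei(\G)^\perp$ (the two being mutually orthogonal), so that "$I^\perp$ is an ideal" is applied to the correct space.
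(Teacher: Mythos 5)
Your proof is correct and follows essentially the same route as the paper's much terser argument: the paper likewise treats the well-definedness and morphism property of $\kappa$ as immediate, obtains the quotient isomorphism $\Lie(\G)/\pi_A(\Lei(\G)^\perp)\cong\h$ via the onto morphism $m:\Lie(\G)\too\h$, $\pi(u)\mapsto\pi_\h(u)$ with kernel $\pi(\Lei(\G)^\perp)$, and deduces part (2) from $\Lei(\G)\subset\G\bullet\G\subset\Lei(\G)^\perp$. Your write-up merely makes explicit the bookkeeping (in particular the identification of $\pi_A(\Lei(\G)^\perp)$ with an honest ideal of $\Lie(\G)$ and the clash of notation for $I$ between Propositions \ref{leib} and \ref{core}) that the paper leaves to the reader.
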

\begin{proof} \begin{enumerate}
		\item The first part of statement is obvious and the second part is a consequence of the fact that the map $m:\Lie(\G)\too\h$, $\pi(u)\mapsto\pi_\h(u)$ is an onto morphism of Lie algebras and $\ker m=\pi_A(\Lei(\G)^\perp)$.
		\item It is clear if one has in mind the fact $\Lei(\G)\subset\g\bullet\g\subset\Lei(\G)^\perp$ established in Proposition \ref{leib}.\qedhere
	\end{enumerate}
	
\end{proof}

Recall that a left Leibniz algebra $\G$ is called simple if, by definition, the only ideals of $\G$ are $\{0\}$, $\Lei(\G)
$ and $\G$ and $\G\bullet\G\not=\Lei(\G)$. It is called  semi-simple if its radical is equal to $\Lei(\G)$ which is equivalent to $\Lei(\G)$ is semi-simple. One can see \cite{aypl} for details.

From what above we can derive the following important results.

\begin{pr}\label{semisimple} Let $(\G,\bullet,\prs)$ be a pseudo-Euclidean  left Leibniz algebra. Then the following assertions hold.
	\begin{enumerate}
		\item If $(\G,\bullet,\prs)$ is $\Li$-quadratic and simple then either $\Lei(\G)=\Lei(\G)^\perp$, $A=\{0\}$ and $\h=\Lie(\G)$ or $\Lei(\G)$ is nondegenerate, $\Lei(\G)^\perp$ is a simple quadratic Lie algebra and $\G=\Lei(\G)\oplus\Lei(\G)^\perp$.
		\item If  $(\G,\bullet,\prs)$ is $\Ri$-quadratic and semi-simple then $(\G,\bullet,\prs)$ is a quadratic Lie algebra.
	\end{enumerate}
	
\end{pr}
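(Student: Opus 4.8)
The plan is to handle the two assertions separately, using the structural results about the core established in Propositions \ref{leib}, \ref{core} and \ref{spliting}. Throughout I write $L=\Lei(\G)$.

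For assertion (1), suppose $(\G,\bullet,\prs)$ is $\Li$-quadratic and simple. Since $L$ is an ideal of $\G$, so is $L^\perp$ only after intersecting appropriately; but what matters is that $I:=L\cap L^\perp$ is an ideal of $\G$ (Proposition \ref{leib}) contained in $L$. By simplicity the only ideals contained in $L$ are $\{0\}$ and $L$ itself (the hypothesis $\G\bullet\G\neq L$ rules out $\G\subset L$, and $\G$ is not contained in the proper ideal $L$ anyway). So either $I=L$, i.e.\ $L\subset L^\perp$, which forces $L=L^\perp$ by a dimension count ($\dim L+\dim L^\perp=\dim\G$ and $L\subset L^\perp$ gives $\dim L\le \dim\G/2$; to get equality I will invoke that $I^\perp=L+L^\perp$ is also an ideal, so $I^\perp\in\{L,\G\}$, and $I^\perp=\G$ would give $I=\{0\}$ contradicting $I=L\neq\{0\}$, whence $I^\perp=L$ and therefore $L^\perp\subset L$, giving $L=L^\perp$); in this case $A=I^\perp/I=\{0\}$ and $\h=\G/I^\perp=\G/L=\Lie(\G)$. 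Or $I=\{0\}$, i.e.\ $L$ is nondegenerate; then by the description of nondegenerate $\Li$-quadratic left Leibniz algebras in Section \ref{section4}, $\G=L\stackrel{\perp}{\oplus}L^\perp$ with $L^\perp$ a quadratic Lie algebra. It remains to see $L^\perp$ is simple: any ideal $J$ of $L^\perp$ is an ideal of $\G$ (since $L\bullet L^\perp=0$ and $L^\perp\bullet L=0$ by Proposition \ref{pr}, as $L^\perp$ is then a two-sided ideal), so $J\in\{\{0\},L,\G\}\cap L^\perp=\{\{0\},L^\perp\}$, using $L\cap L^\perp=\{0\}$; hence $L^\perp$ is a simple Lie algebra and, carrying the nondegenerate invariant metric, a simple quadratic Lie algebra.

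For assertion (2), suppose $(\G,\bullet,\prs)$ is $\Ri$-quadratic and semi-simple, so by definition $L$ is semi-simple, in particular $L=[L,L]$ and $Z(L)=\{0\}$. By Proposition \ref{leib}(2), $L\subset\G\bullet\G\subset L^\perp$, so $L$ is totally isotropic; but a semi-simple Lie algebra carries a nondegenerate Killing form, so the restriction of $\prs$ to $L$ cannot be totally isotropic unless $L=\{0\}$ — more carefully, I will argue that $L$, being an ideal of $\G$ on which $\prs$ restricts trivially, forces $L=\{0\}$: since $L\subset L^\perp$ and $L$ is semi-simple Lie, consider that $\Lie(\G)=\G/L$ and note $[\G,\G]\subset L^\perp$; the cleanest route is to use that a semi-simple Leibniz algebra has $L=\G\bullet\G$ only if\ldots in fact I will instead use Proposition \ref{core}(2): $A=L^\perp/L$ is a quadratic Lie algebra, and Proposition \ref{spliting}(2) gives that $\kappa(A)$ is an ideal of $\Lie(\G)$ containing $[\Lie(\G),\Lie(\G)]$; combined with semi-simplicity of $\G$ (hence of $\Lie(\G)$), $[\Lie(\G),\Lie(\G)]=\Lie(\G)$, so $\kappa(A)=\Lie(\G)$, meaning $\kappa$ is onto. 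Since $\Lie(\G)=\G/L$ is semi-simple and $A=L^\perp/L$, comparing dimensions forces $L^\perp=\G$, i.e.\ $L=L^\perp{}^\perp=\{0\}$, whence $\G$ is a Lie algebra; being pseudo-Euclidean with invariant metric, it is a quadratic Lie algebra.

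\textbf{Main obstacle.} The delicate point is assertion (2): pinning down exactly why a nonzero semi-simple Leibniz ideal is incompatible with $L$ being totally isotropic. The argument must not merely say ``the Killing form of $L$ is nondegenerate'' — that is the Killing form of $L$ as an abstract Lie algebra, not the restriction of $\prs$ — but should route through the surjectivity of $\kappa:A\to\Lie(\G)$ and the dimension count $\dim A=\dim L^\perp-\dim L=\dim\G-2\dim L$ versus $\dim\Lie(\G)=\dim\G-\dim L$, which together with $\kappa$ onto give $\dim L=0$. I expect the bookkeeping around which maps are injective/surjective (Proposition \ref{spliting} states $\kappa$ is ``into'' in the $\Ri$ case) to be where care is needed.
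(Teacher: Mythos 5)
Your treatment of assertion (1) in the case $I=\Lei(\G)\cap\Lei(\G)^\perp=\{0\}$ contains a genuine gap: the claim that every ideal $J$ of the Lie algebra $\Lei(\G)^\perp$ is an ideal of $\G$, justified by ``$\Lei(\G)^\perp\bullet\Lei(\G)=0$ by Proposition \ref{pr}''. Proposition \ref{pr} does not assert this; it says that $\Lei(\G)^\perp$ is an ideal \emph{if and only if} $\Lei(\G)^\perp\bullet\Lei(\G)=0$, and in the present situation this product is in fact as large as possible. Indeed, since $\Li_u=0$ for every $u\in\Lei(\G)$ and $\Lei(\G)^\perp$ is a left ideal, one has $\G\bullet\G=(\Lei(\G)^\perp\bullet\Lei(\G))+(\Lei(\G)^\perp\bullet\Lei(\G)^\perp)$ with the first summand contained in $\Lei(\G)$ and the second in $\Lei(\G)^\perp$; as $\Lei(\G)\subset\G\bullet\G$ and the splitting $\G=\Lei(\G)\oplus\Lei(\G)^\perp$ is direct, this forces $\Lei(\G)^\perp\bullet\Lei(\G)=\Lei(\G)$, which is nonzero whenever $\Lei(\G)\neq\{0\}$. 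Consequently an ideal $J$ of $\Lei(\G)^\perp$ need not satisfy $J\bullet\Lei(\G)\subset J$ (that would require $J\bullet\Lei(\G)\subset J\cap\Lei(\G)=\{0\}$), and your list of candidate ideals does not apply to $J$ itself. The conclusion is still reachable: $J+\Lei(\G)$ \emph{is} an ideal of $\G$ (one checks $\G\bullet(J+\Lei(\G))\subset J+\Lei(\G)$ using $\Lei(\G)\bullet\G=\{0\}$ and $\Lei(\G)^\perp\bullet J=[\Lei(\G)^\perp,J]\subset J$, and $(J+\Lei(\G))\bullet\G\subset J+\Lei(\G)$ since $J\bullet\Lei(\G)\subset\Lei(\G)$), so simplicity gives $J+\Lei(\G)\in\{\{0\},\Lei(\G),\G\}$ and hence $J=\{0\}$ or $J=\Lei(\G)^\perp$. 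One should also observe that $\Lei(\G)^\perp$ cannot be abelian: otherwise $\G\bullet\G\subset\Lei(\G)$, which together with $\Lei(\G)\subset\G\bullet\G$ contradicts the requirement $\G\bullet\G\neq\Lei(\G)$ in the definition of simplicity.

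The rest of the proposal is sound. The first case of (1), handled through the ideals $I$ and $I^\perp=\Lei(\G)+\Lei(\G)^\perp$, is essentially the paper's argument. For assertion (2) you take a longer route than the paper: the paper simply notes that semi-simplicity gives $\G=\G\bullet\G\subset\Lei(\G)^\perp$, hence $\Lei(\G)=\{0\}$; your detour through the surjectivity of $\kappa:A\to\Lie(\G)$ and the dimension count $\dim A=\dim\G-2\dim\Lei(\G)\geq\dim\G-\dim\Lei(\G)$ is correct and reaches the same conclusion, but the preliminary remarks about the Killing form of $\Lei(\G)$ are a red herring (the relevant semi-simple object is $\Lie(\G)=\G/\Lei(\G)$, not the Leibniz ideal) and should be deleted.
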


\begin{proof} \begin{enumerate}
		\item According to Proposition \ref{leib}, $\Lei(\G)+\Lei(\G)^\perp$ is an ideal and hence $\Lei(\G)+\Lei(\G)^\perp=\Lei(\G)$ or $\Lei(\G)+\Lei(\G)^\perp=\G$. In the first case $\Lei(\G)^\perp\subset\Lei(\G)$ and $\Lei(\G)^\perp$ becomes an ideal and  $\Lei(\G)^\perp=\Lei(\G)$. Thus  $A=\{0\}$ and $\h=\Lie(\G)$. In the second case $\Lei(\G)$ is nondegenerate,  $\G=\Lei(\G)\oplus\Lei(\G)^\perp$ and $\Lei(\G)^\perp$ is a simple quadratic Lie algebra.
		\item If $\G$ is semi-simple then $\G\bullet\G=\G$ but, according to Proposition \ref{leib}, $\G\bullet\G\subset\Lei(\G)^\perp$ and hence $\Lei(\G)=\{0\}$ which completes the proof.\qedhere
	\end{enumerate}
	
\end{proof}

\section{  Description of  $\Li$-quadratic left Leibniz algebras with degenerate Leibniz ideal  }\label{section5}
  
  In this section, we utilize the core of a $\Li$-quadratic left Leibniz algebra, as defined in Definition \ref{def}, to reconstruct the original algebra using a process that resembles a combination of double extension and $T^*$-extension. This reconstruction method enables us to derive several significant results.
The double extension  is a well-known process which appeared first in \cite{Medina} in the study of quadratic Lie algebras while the $T^*$-extension was introduced  in \cite{bor} in the study of metrised nonassociative algebras.

For an  algebra $(A, \bullet)$,  
 $\mathrm{Der}(A)$ is the Lie algebra of derivations of $A$, and if $A$ carries a pseudo-Euclidean metric $\prs_A$, $\mathrm{so}(A)$ is the Lie algebra of skew-symmetric endomorphisms of $A$ and, for any vector space $\h$, we call natural the metric $\prs_n$ on $\h\oplus A\oplus\h^*$ given by
\begin{equation}\label{natural}
	\langle X+a+\al,X+a+\al\rangle_n=2\prec\al,X\rangle+\langle a,a\rangle_A,\quad X\in\h,a\in A,\al\in\h^*.
\end{equation}

For a Lie algebra $(\G,\br)$, $\ad: \G \too \mathrm{End}(\G)$ is the adjoint representation, $\ad^*: \G \too \mathrm{End}(\G^*)$ the coadjoint representation give by $\prec \ad_X^*\al,Y\succ=-\prec\al,[X,Y]\succ.$ The coadjoint representation extends to $\wedge^2\G^*$ by the formula
\[ \ad_X^*\om(Y,Z)=-\om([X,Y],Z)-\om(Y,[X,Z]),\quad \om\in\wedge^2\G^*,X,Y,Z\in\G. \]
For $\Om:\G\too\wedge^2\G^*$, the differential of $\Om$ is the skew-symmetric bilinear map $\De\Om:\G\times\G\too\wedge^2\G^*$ given by
\begin{equation}\label{delta} \De(\Om)(X,Y)=\ad_X^*\Om(Y)-\ad_Y^*\Om(X)-\Om([X,Y]) 
	\end{equation}
Finally, for any $\om\in\wedge^2\G^*$, we denote by $\om^\flat:\G\too\G^*$ the linear map given by $\prec\om^\flat(X),Y\succ=\om(X,Y)$.

Let us state one of our  main result.
\begin{theo}\label{main3} Let $(\G,\bullet_\G,\prs_\G)$ be a $\Li$-quadratic left Leibniz algebra such that $\Lei(\G)$ is degenerate and let  $(\h,\br_\h)$ and $(A,\bullet_A,\prs_A)$ be its core. Then
	$(\g, \bullet_\G, \prs_\G)$    is isomorphic to $(\h\oplus A \oplus \h^{*}, \bullet, \prs_n)$ where, for any $X,Y\in \h$, $a,b\in A$, $\al\in\h^*$, the non vanishing Leibniz products are given by    
	\begin{equation}\label{dd} \begin{cases}X\bullet Y=[X, Y]_\h+\theta(X,Y)+\Om(X)^\flat(Y),\;
			a\bullet  b=a\bullet_Ab+\mu(a,b),,\\
			X\bullet a=F(X)a+\nu(X,a),\;
			X\bullet \al=(\mathrm{ad}^{\h})_{X}^*\al,\;
			a\bullet X=G(X)a+\rho(a,X),
		\end{cases} 
	\end{equation}
and
	 $F:\h\too \mathrm{so}(A)\cap\mathrm{Der}(A)$,  $G:\h\too \mathrm{End}(A)$, $\Om:\h\too \wedge^2\h^*$  are linear maps, 
		 $\theta,\om:\h\times\h\too A$  are bilinear maps such that $\om$ is skew-symmetric and $\mu,\rho,\nu$ are defined by the following relations:
			\begin{equation} \label{relations} \prec\rho(a,X),Y\succ=-\langle\om(X,Y),a\rangle_A,\;
			 \prec\mu(a,b),X\succ=-\langle G(X)a,b\rangle_A,\; \prec\nu(X,a),Y\succ=
			-\langle \theta(X,Y),a\rangle_A.
		  \end{equation} 
	Moreover,  $(F,G,\om, \theta, \Om)$ satisfy the following equations, for any $X,Y,Z,T\in\h$ and $a\in A$,
	\begin{eqnarray}\label{eqmain3} \begin{cases}
			[\Li_a^A,F(X)]=\Li_{G(X)a}^A,\; [\Li_a^A,G(X)]=\Ri^A_{G(X)a}\\
			\Li_{\om(X,Y)}^A=G^*(X)G(Y)-G^*(Y)G(X),\\
			\Ri_{\theta(X,Y)}=[F(X),G(Y)]-G([X,Y]_\h)=F(X)G(Y)+G(Y)G(X)-G([X,Y]_{\h}),\\
			\Li^{A}_{\theta(X,Y)}=[F(X),F(Y)]-F([X, Y]_\h),\\
			\theta(X,[Y, Z]_\h)-\theta([X, Y]_\h,Z)	-
			\theta(Y,[X, Z]_\h)+F(X)\theta(Y,Z)-F(Y)\theta(X,Z)-G(Z)\theta(X,Y)=0,\\
			G(Z)^*\theta(X,Y)-G(Y)^*\theta(X,Z)-F(X)\om(Y,Z)+\om(Y,[X,Z]_\h)+\om([X,Y]_\h,Z)=0,\\
			 (G(X)^*+F(X))\om(Y,Z)=0,\\
			\De(\Om)(X,Y)(Z,T)=
			\langle \theta(X,T),\theta(Y,Z)\rangle_A-\langle \theta(Y,T),\theta(X,Z)\rangle_A
			-\langle \om(Z,T),\theta(X,Y)\rangle_A,
				\end{cases}\end{eqnarray}where $\De$ is given by \eqref{delta}.

Conversely, if $(A,\bullet_A,\prs_A)$ is a $\Li$-quadratic left Leibniz algebra, $(\h,\br_\h)$ is a Lie algebra and $(F,G,\om, \theta, \Om)$ a list as above satisfying \eqref{relations} and \eqref{eqmain3} then $(\h\oplus A\oplus \h^*,\bullet,\prs_n)$ where $\bullet$ is given by \eqref{dd} is a $\Li$-quadratic left Leibniz algebra. We call it the $\Li$-quadratic left algebra obtained form $(A,\bullet_A,\prs_A)$ and $(\h,\br_\h)$ by means of $(F,G,\om,\theta,\Om)$.

\end{theo}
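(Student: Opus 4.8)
The plan is to prove the two directions separately, but organized around the same bookkeeping. First I would set up the canonical decomposition coming from Proposition \ref{leib}: write $I=\Lei(\G)\cap\Lei(\G)^\perp$, which by that proposition is a totally isotropic ideal contained in $\Lei(\G)\subset\G\bullet\G$, pick a maximal isotropic complement so that $\G=\h\oplus (I^\perp/I) \oplus I^*$ as a metric vector space with the natural metric $\prs_n$ of \eqref{natural}, identifying $I\cong\h^*$ and $I^\perp/I\cong A$. (Here $\h=\G/I^\perp$ carries the Lie bracket $\br_\h$ and $A=I^\perp/I$ carries the $\Li$-quadratic left Leibniz structure $\bullet_A$, exactly the core of Definition \ref{def}.) With this identification fixed, the product $\bullet_\G$ decomposes into blocks with respect to $\h\oplus A\oplus\h^*$. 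The constraints are: $\Lei(\G)^\perp=I^\perp$ must contain $A\oplus\h^*$; $\h^*=I$ is contained in $Z^\ell(\G)$ and in $(\G\bullet\G)^\perp$ (Proposition \ref{leib}); and $\prs_n$ is $\Li$-invariant. Reading off each block under these conditions produces precisely the list of structure data $X\bullet Y$, $a\bullet b$, $X\bullet a$, $X\bullet\al$, $a\bullet X$ in \eqref{dd}, with $F,G\colon\h\to\mathrm{End}(A)$, $\theta,\om\colon\h\times\h\to A$ and $\Om\colon\h\to\wedge^2\h^*$ the free parameters, and $\mu,\nu,\rho$ forced by $\Li$-invariance of $\prs_n$ to be the adjoints given in \eqref{relations}. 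The fact that the $\h^*$-component of $X\bullet\al$ is the coadjoint action $(\ad^\h)^*_X\al$ is also forced: $\Li_X$ preserves $I=\h^*$, and $\Li$-invariance of $\prs_n$ together with the bracket $\br_\h$ on $\h$ pins it down. Similarly $F(X)\in\mathrm{so}(A)$ because $\Li_X$ is skew on $I^\perp$ and descends to $A$, where the induced metric is $\prs_A$.

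Next I would turn the left Leibniz identity \eqref{l1}, $\Li_{u\bullet v}=[\Li_u,\Li_v]$, into the system \eqref{eqmain3}. This is the computational core and I would relegate it to the appendix (Section \ref{apend}, as the excerpt announces). Concretely, one expands $\Li_{u\bullet v}=[\Li_u,\Li_v]$ for $u,v$ ranging over the three summands $\h,A,\h^*$ and reads off the component equations. Taking $u,v\in A$ recovers that $(A,\bullet_A)$ is left Leibniz and yields the first two relations $[\Li_a^A,F(X)]=\Li_{G(X)a}^A$ and $[\Li_a^A,G(X)]=\Ri^A_{G(X)a}$ from the mixed $a,X$ evaluations; taking $u=X,v=Y$ and looking at the $A$-valued and $\h$-valued parts gives $\Li^A_{\theta(X,Y)}=[F(X),F(Y)]-F([X,Y]_\h)$ and, on the $\End(A)$-level, $\Ri_{\theta(X,Y)}=[F(X),G(Y)]-G([X,Y]_\h)$, while the identity $F(X)\in\mathrm{Der}(A)$ rewrites the first as the stated form $F(X)G(Y)+G(Y)G(X)-G([X,Y]_\h)$; the $\h^*$-valued part of the $X,Y$ evaluation, after using \eqref{relations} to convert into pairings, produces the $\theta$-cocycle relation and the $\om$-relation; and the relation $\Li_{\om(X,Y)}^A=G^*(X)G(Y)-G^*(Y)G(X)$ comes from the $A$-component of $[\Li_X,\Li_Y]$ acting on $\h^*$ paired against $A$, i.e. from skew-symmetry of $\prs_n$ applied to $\Li_{X\bullet Y}$ restricted to $I=\h^*$. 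The equation $(G(X)^*+F(X))\om(Y,Z)=0$ is exactly the obstruction that makes $I$ stay inside $(\G\bullet\G)^\perp$, equivalently that $\rho$ in \eqref{relations} is consistent, and the last equation — $\De(\Om)(X,Y)=$ the pairing expression — is the $\h^*$-in-$\h^*$ block of the Leibniz identity for $u=X,v=Y$ evaluated on $Z,T\in\h$, using the definition \eqref{delta} of $\De$. (One also has to check that the remaining block-evaluations are either vacuous or already consequences of the listed relations; this redundancy is the tedious part.)

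For the converse I would simply define $\bullet$ on $\h\oplus A\oplus\h^*$ by \eqref{dd} with $\mu,\nu,\rho$ given by \eqref{relations}, and verify: (i) $\prs_n$ is $\Li$-invariant — this is a direct bilinear check, for which the role of \eqref{relations} is exactly to make $\Li_u^*=-\Li_u$ on each block; and (ii) $\bullet$ is left Leibniz — which is the reverse of the computation above, i.e. the equations \eqref{eqmain3} are precisely what is needed to verify $\Li_{u\bullet v}=[\Li_u,\Li_v]$ on all triples, again block by block. Finally one checks $\Lei(\h\oplus A\oplus\h^*)$ is degenerate (it contains $\h^*$ as an isotropic subspace, since $X\bullet Y+Y\bullet X$ has $\h$-component zero and the symmetric parts land in $A\oplus\h^*$), and that the core of the constructed algebra is indeed $(A,\bullet_A,\prs_A)$ and $(\h,\br_\h)$, so that the construction is genuinely inverse to the analysis of the first part.

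The main obstacle I expect is purely organizational rather than conceptual: controlling the large number of block-components of the single identity \eqref{l1} without double-counting, and in particular checking that all the "cross" evaluations not explicitly listed in \eqref{eqmain3} follow automatically (e.g. the evaluations involving two $\h^*$-arguments, or one $\h^*$ and one $A$, vanish because $\h^*\subset Z^\ell(\G)\cap(\G\bullet\G)^\perp$). The second genuine subtlety is making sure $F$ takes values in $\mathrm{so}(A)\cap\mathrm{Der}(A)$ and $(A,\bullet_A)$ is forced to be $\Li$-quadratic with \emph{nondegenerate} Leibniz ideal — this uses Proposition \ref{core}(1) and is what guarantees the recursion in the structure terminates and the data on $A$ is of the same type but strictly smaller, which is the point of calling $A$ the core.
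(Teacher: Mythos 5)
Your plan follows the paper's proof essentially verbatim: the same splitting $\G=\h\oplus A\oplus\h^*$ obtained from the totally isotropic ideal $I=\Lei(\G)\cap\Lei(\G)^\perp$, a complement $B$ of $I$ in $I^\perp$ and a totally isotropic complement of $I\oplus B$ identified with $\h_0\cong\h$; the same block-by-block reading of the product using that $\Li_\al=0$ for $\al\in I$, that $I$ and $I^\perp$ are ideals with $I^\perp\bullet I=0$, and $\Li$-invariance to force \eqref{relations} and $X\bullet\al=(\ad^\h_X)^*\al$; and the same deferral of the equivalence between the left Leibniz identity and \eqref{eqmain3} to an appendix computation of $Q(u,v,w)=u\bullet(v\bullet w)-(u\bullet v)\bullet w-v\bullet(u\bullet w)$ on all block triples, together with the reversal of that computation for the converse. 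Two supporting claims in your sketch should be corrected before writing out the details: first, $I^\perp=\Lei(\G)+\Lei(\G)^\perp$, not $\Lei(\G)^\perp$; second, $\h^*=I$ is \emph{not} contained in $(\G\bullet\G)^\perp$ in the $\Li$-invariant case (that containment is the $\Ri$-invariant statement of Proposition \ref{leib}; here $(\G\bullet\G)^\perp=Z^r(\G)$ by Proposition \ref{pr} while $\Ri_\al(X)=X\bullet\al=(\ad^\h_X)^*\al\neq0$ in general), so the evaluations of $Q$ you wish to discard must instead be killed using $\Li_{\h^*}=0$, the fact that $\h^*$ is a left ideal, and $I^\perp\bullet\h^*=0$, exactly as in the appendix.
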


\begin{proof}We prove the theorem in two steps.
	 
		{\bf First step.}	 In this step we build all the data given in the theorem.
		
		Let $(\G,\bullet,\prs)$ be a degenerate $\Ll$-invariant Leibniz algebra. Let $(\h,\br_\h)$ 	and $(A,\bullet_A,\prs_A)$ be its core defined in Definition \ref{def}. We denote by $I=\Lei(\G)\cap \Lei(\G)^\perp$.  Choose  a vector subspace $B$ such that $I^\perp=B\oplus I$.
		$B^\perp$ contains $I$ and $\dim B^\perp=2\dim I$. So there exists a totally isotropic vector subspace $\h_0$ such that $B^\perp=I\oplus \h_0$.  Now, the metric on $B^\perp$ permits the identification of $I$ to $\h_0^*$ and hence $(B^\perp,\prs)$ is isometric $\h_0\oplus\h_0^*$ endowed with its canonical neutral metric.  We obtain that
		$ \G=\h_0\oplus B\oplus \h_0^*$ and the metric is $\prs_n$.
		
		Now, $I$ is a totally isotropic  ideal such that for any $u\in I$, $\mathrm{L}_u=0$, $I^\perp=\Lei(\G)+\Lei(\G)^\perp$ and, by virtue of Proposition \ref{leib}, $I^\perp$ is an ideal and $I^\perp\bullet I=0$. So the Leibniz product can be written, for any $X,Y\in \h$, $a,b\in B$,  $\al\in\h^*$,    
		\begin{equation*} \begin{cases}X\bullet Y=[X, Y]_{\h_0}+\theta(X,Y)+\theta_2(X,Y),\;
				a\bullet  b=a\bullet_Bb+\mu(a,b),\;\al\bullet X=\al\bullet a=0,\\
				X\bullet a=F(X)a+\nu(X,a),\;
				X\bullet \al=H(X)\al,\;
				a\bullet X=G(X)a+\rho(a,X),\; a\bullet\al=0,
			\end{cases} 
		\end{equation*}where $\theta(X,Y)\in B$, $\theta_2(X,Y)\in\h_0^*$, $a\bullet_B b\in B$, $\mu(X,a),\nu(X,a),\rho(a,X)\in\h_0^*$, $F(X),G(X)\in\mathrm{End}(A)$ and $H(X)\in\mathrm{End}(\h_0^*)$. 
	
	    Let $\pi_\h:\G\too\h=\G/I^\perp$ and $\pi_A:I^\perp\too A=I^\perp/I$. It is clear that $\pi_\h$ identifies $(\h_0,\br_{\h_0})$ with $(\h,\br_\h)$ and $\pi_A$ identifies $(B,\bullet_B,\prs_B)$ with $(A,\bullet_A,\prs_A)$. This shows that $\br_{\h_0}$ is a Lie bracket and $\bullet_B$ is a left Leibniz bracket. We identify $\h_0$ to $\h$ and $B$ to $A$. Furthermore, the relation $\langle X\bullet \al,Y\rangle=-\langle \al,X\bullet Y\rangle$ gives that $H(X)=(\ad_X^\h)^*$.
		For any $X,Y,Z\in\h$
		\[ \langle X\bullet Y,Z\rangle_n=\prec\theta_2(X,Y),Z\succ=-
		\langle X\bullet Z,Y\rangle_n=-\prec\theta_2(X,Z),Y\succ. \]
		Define $\Om:\h\too\wedge^2\h^*$ by the relation $\Om(X)(Y,Z)=\prec\theta_2(X,Y),Z\rangle$ and hence $\theta_2(X,Y)=\Om(X)^\flat(Y)$.
		We introduce $\om\in\wedge^2\h^*$ by the relation $\prec\rho(a,X),Y\succ=-\langle\om(X,Y),a\rangle_A$ and the other relation in \eqref{relations} are a direct consequence of the fact that the metric is $\Li$-quadratic.

		{\bf Second step.} In this step we show that the product given by \eqref{dd} is left Leibniz and the metric is $\Li$-quadratic if and only if \eqref{eqmain3} holds and $F$ takes its values in $\mathrm{Der}(A)\cap\mathrm{so}(A)$.  This is a long and tedious computation we defer to the appendix.\qedhere

\end{proof}

\begin{co}\label{zero}Let $(\G,\bullet_\G,\prs_\G)$ be a $\Li$-quadratic left Leibniz algebra, $(\h,\br_\h)$ and $(A,\bullet_A,\prs_A)$ its core. If  $\Lei(\G)^\perp=\Lei(\G)$ then $A=\{0\}$, $\h=\Lie(\G)$ and $(\G,\bullet_\G,\prs_\G)$ is isomorphic to $(\h\oplus\h^*,\bullet,\prs_n)$ where, for any $X,Y\in\h$, $\al,\be\in\h^*$
	\begin{equation}\label{comoh} (X+\al)\bullet (Y+\be)=[X, Y]_\h+(\ad^{\h})_X^*\be+\Om(X)^\flat(Y),\;
	\end{equation}and $\Om:\h\too\wedge^2\h^*$ is a 1-cocycle, i.e., $\De(\Om)=0$.
	Moreover, two such structures of $\Li$-quadratic left Leibniz algebras on $\h\oplus\h^*$ associated to two 1-cocycles $\Om_1$ and $\Om_2$ are isomorphic  if $\Om_1$ and $\Om_2$ define the same cohomolgy class in $H^2(\h,\wedge^2\h^*)$.
	
\end{co}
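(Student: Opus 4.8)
The plan is to read the corollary off Theorem \ref{main3} in the degenerate specialization $A=\{0\}$. First I would unwind the definition of the core. Since $I=\Lei(\G)\cap\Lei(\G)^\perp$ and the hypothesis is $\Lei(\G)^\perp=\Lei(\G)$, we get $I=\Lei(\G)$ and $I^\perp=\Lei(\G)^\perp=\Lei(\G)=I$; hence $A=I^\perp/I=\{0\}$ and $\h=\G/I^\perp=\G/\Lei(\G)=\Lie(\G)$, the bracket $\br_\h$ being the one induced on this quotient. Note that $\Lei(\G)$ is then a Lagrangian subspace, so $\dim\G=2\dim\h$, which already matches the underlying vector space $\h\oplus\h^*$; and, leaving aside the trivial case $\G=\{0\}$, we have $\Lei(\G)\neq\{0\}$ (otherwise $\Lei(\G)^\perp=\G\neq\{0\}=\Lei(\G)$), so $\Lei(\G)$ is degenerate and Theorem \ref{main3} is available.

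Second, I would substitute $A=\{0\}$ into Theorem \ref{main3}. All the data valued in $A$, $\mathrm{End}(A)$ or $\mathrm{so}(A)\cap\mathrm{Der}(A)$ is forced to vanish: $F=G=0$, $\theta=0$, $\om=0$, and then $\mu=\nu=\rho=0$ by \eqref{relations}. The product \eqref{dd} thus collapses to \eqref{comoh} — the term $X\bullet\al=(\ad^\h)_X^*\al$ being exactly the identity $H(X)=(\ad_X^\h)^*$ isolated in the first step of the proof of Theorem \ref{main3}. In the system \eqref{eqmain3} every equation becomes $0=0$ except the last, whose right-hand side now vanishes, so that it reads $\De(\Om)(X,Y)(Z,T)=0$; hence $\Om$ is a $1$-cocycle, $\De(\Om)=0$. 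Conversely, by the converse half of Theorem \ref{main3}, any linear $\Om:\h\too\wedge^2\h^*$ with $\De(\Om)=0$ together with $A=\{0\}$ yields the list $(0,0,0,0,\Om)$, which satisfies \eqref{relations} and \eqref{eqmain3} trivially, so \eqref{comoh} defines a $\Li$-quadratic left Leibniz structure on $\h\oplus\h^*$ with metric $\prs_n$. This proves the first assertion.

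For the last assertion I would exhibit an explicit isomorphism. Suppose $\Om_1,\Om_2:\h\too\wedge^2\h^*$ are cocycles defining the same class in $H^2(\h,\wedge^2\h^*)$, that is $\Om_2-\Om_1$ is a coboundary, $(\Om_2-\Om_1)(X)=\ad^*_X\xi$ for some $\xi\in\wedge^2\h^*$; pairing with $Y\in\h$ this unwinds to $(\Om_2-\Om_1)(X)^\flat(Y)=\ad^*_X(\xi^\flat(Y))-\xi^\flat([X,Y]_\h)$. Consider $\Phi:\h\oplus\h^*\too\h\oplus\h^*$, $\Phi(X+\al)=X+\al+\xi^\flat(X)$, a linear bijection with inverse $X+\al\mapsto X+\al-\xi^\flat(X)$. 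Since $\xi$ is skew-symmetric, $\langle\Phi(X+\al),\Phi(Y+\be)\rangle_n=\prec\al,Y\succ+\prec\be,X\succ+\xi(X,Y)+\xi(Y,X)=\langle X+\al,Y+\be\rangle_n$, so $\Phi$ is an isometry of $\prs_n$. Computing $\Phi\big((X+\al)\bullet_2(Y+\be)\big)$ and $\Phi(X+\al)\bullet_1\Phi(Y+\be)$ from \eqref{comoh}, and using that $\h^*$ acts trivially from the left and that $X\bullet\al=(\ad^\h)_X^*\al$ in both structures, one finds the two agree if and only if $\Om_2(X)^\flat(Y)-\Om_1(X)^\flat(Y)=\ad^*_X(\xi^\flat(Y))-\xi^\flat([X,Y]_\h)$, i.e.\ exactly the coboundary relation above. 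Hence $\Phi$ is an isomorphism of $\Li$-quadratic left Leibniz algebras, and equal cohomology classes give isomorphic structures.

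I do not expect a serious obstacle, since Theorem \ref{main3} carries the structural load and the rest is bookkeeping; the one point needing care is the last paragraph. There one must keep the coadjoint-action sign conventions consistent so that the intertwining condition for $\Phi$ coincides on the nose with the coboundary of $\xi$, and one must genuinely use that $\xi\in\wedge^2\h^*$ — not merely a map $\h\too\h^*$ — for $\Phi$ to preserve $\prs_n$.
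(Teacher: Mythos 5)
Your proposal is correct and follows essentially the same route as the paper: the first part is read off Theorem \ref{main3} by specializing to $A=\{0\}$ (where all $A$-valued data vanish and the last equation of \eqref{eqmain3} reduces to $\De(\Om)=0$), and the second part is proved by the same translation-type isometry $X+\al\mapsto X+\al\pm\xi^\flat(X)$ that the paper uses, your sign merely orienting the isomorphism from $(\bullet_2)$ to $(\bullet_1)$ instead of the reverse. Your added checks (that $\Lei(\G)$ is nonzero and totally isotropic, hence degenerate, so Theorem \ref{main3} applies, and that skew-symmetry of $\xi$ gives the isometry) are details the paper leaves implicit.
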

\begin{proof} The first part is a consequence of Theorem \ref{main3}. Let $(\h,\br)$ be a Lie algebra and $\Om_1$ and $\Om_2$ two 1-cocycles on $\h$ with values in $\wedge^2\h^*$. Denote by $\bullet_i$ the left Leibniz product on $\h\oplus\h^*$ associated to $\Om_i$ and given by \eqref{comoh}. If $\Om_1$ and $\Om_2$ define the same cohomolgy class in $H^2(\h,\wedge^2\h^*)$ then there exists $\om\in\wedge^2\h^*$ such that for any $X\in\h$, $\Om_2(X)=\Om_1(X)+\ad_X^*\om$. On can check easily that the isomorphism $\phi:\h\oplus\h^*\too\h\oplus\h^*$, $X+\al\mapsto X+\al-\om^\flat(X)$ is an isomorphism of $\Li$-quadratic left Leibniz algebras between  $(\h\oplus  \h^*,\bullet_1,\prs_n)$ and $(\h\oplus  \h^*,\bullet_2,\prs_n)$.
	\end{proof}

As a consequence we can give a complete description of $\Li$-quadratic simple left Leibniz algebras.

\begin{co} $(\G,\bullet_\G,\prs_\G)$ be a $\Li$-quadratic simple left Leibniz algebra. Then $(\G,\bullet_\G,\prs_\G)$ is isomorphic to $(\h\oplus\h^*,\bullet,\prs_n)$ where $\h=\Lie(\G)$ and, for any $X,Y\in\h$, $\al,\be\in\h^*$
	\[ (X+\al)\bullet (Y+\be)=[X, Y]_\h+(\ad_X^{\h})^*\be.
	\]
	
\end{co}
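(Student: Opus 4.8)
The plan is to derive the corollary directly from Corollary \ref{zero} by showing that simplicity forces two things: first, that the Leibniz ideal must be totally isotropic (so that we are in the degenerate-with-$A=\{0\}$ case of Corollary \ref{zero}), and second, that the $1$-cocycle $\Om$ can be taken to be zero. First I would invoke Proposition \ref{semisimple}(1): since $(\G,\bullet_\G,\prs_\G)$ is $\Li$-quadratic and simple, either $\Lei(\G)=\Lei(\G)^\perp$, or $\Lei(\G)$ is nondegenerate with $\G=\Lei(\G)\stackrel{\perp}\oplus\Lei(\G)^\perp$ and $\Lei(\G)^\perp$ a simple quadratic Lie algebra. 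The second alternative must be excluded: in that case $\Lei(\G)^\perp$ is a nonzero ideal different from $\{0\}$, $\Lei(\G)$ and $\G$ (it is a proper nonzero complement of the nonzero ideal $\Lei(\G)$), contradicting simplicity — unless $\Lei(\G)=\{0\}$, but then $\G$ is a Lie algebra and $\Lei(\G)=\G\bullet\G$ is impossible to violate in the required sense (a simple Leibniz algebra by definition has $\G\bullet\G\ne\Lei(\G)$, and with $\Lei(\G)=\{0\}$ this just says $\G$ is a simple Lie algebra, which is the degenerate case with $\h^*=\{0\}$ trivially, so it is already covered). Hence we are in the first alternative: $\Lei(\G)^\perp=\Lei(\G)$, and Corollary \ref{zero} applies, giving $A=\{0\}$, $\h=\Lie(\G)$, and $(\G,\bullet_\G,\prs_\G)\cong(\h\oplus\h^*,\bullet,\prs_n)$ with $\bullet$ given by \eqref{comoh} for some $1$-cocycle $\Om:\h\too\wedge^2\h^*$.

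Next I would show $\h=\Lie(\G)$ is a simple Lie algebra. Indeed, ideals of $\Lie(\G)=\G/\Lei(\G)$ correspond to ideals of $\G$ containing $\Lei(\G)$; by simplicity of $\G$ the only such ideals are $\Lei(\G)$ and $\G$, so $\Lie(\G)$ has no proper nonzero ideals, and it is nonzero (else $\G=\Lei(\G)$ would force $\G\bullet\G\subset\Lei(\G)$, and in fact $\Lei(\G)$ being an ideal on which all left multiplications vanish would make $\G$ too degenerate — more carefully, $\Lie(\G)=\{0\}$ means $\G=\Lei(\G)$, but then for $u\in\Lei(\G)$ we have $\Li_u=0$, so $\G\bullet\G=\{0\}\subset\Lei(\G)$, contradicting $\G\bullet\G\ne\Lei(\G)$ unless $\Lei(\G)\ne\{0\}$, which then gives $\G\bullet\G=\{0\}$ properly contained, still a contradiction when $\dim\G\geq1$; so $\Lie(\G)\ne\{0\}$). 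Thus $\h$ is simple.

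Then I would use that for a simple Lie algebra $\h$ one has $H^2(\h,M)=0$ for every finite-dimensional $\h$-module $M$ — this is the Whitehead lemma (a standard consequence of complete reducibility for semisimple Lie algebras). Applying it to $M=\wedge^2\h^*$, every $1$-cocycle $\Om:\h\too\wedge^2\h^*$ is a coboundary, i.e., defines the zero cohomology class. By the last assertion of Corollary \ref{zero}, the structure associated to $\Om$ is therefore isomorphic to the one associated to $\Om=0$. Setting $\Om=0$ in \eqref{comoh} yields exactly
\[ (X+\al)\bullet(Y+\be)=[X,Y]_\h+(\ad_X^{\h})^*\be, \]
which is the asserted normal form. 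I expect the main obstacle to be the bookkeeping around the degenerate pathological cases (when $\Lie(\G)$ or $\Lei(\G)$ is very small or $\G$ is already a Lie algebra) needed to cleanly land in the hypotheses of Corollary \ref{zero}; the cohomological vanishing step is routine given Whitehead's lemma, and the identification of $\h$ as simple is immediate from the ideal correspondence.
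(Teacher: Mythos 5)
Your overall strategy (reduce to Corollary \ref{zero} and kill the cocycle by a Whitehead-type vanishing) is the same as the paper's, and the second half of your argument --- simplicity of $\Lie(\G)$ via the ideal correspondence, and the fact that every $1$-cocycle $\Om:\h\too\wedge^2\h^*$ is a coboundary because $\h$ is simple --- is exactly what the paper does. The genuine gap is in your exclusion of the second alternative of Proposition \ref{semisimple}. You claim that when $\Lei(\G)$ is nondegenerate, $\Lei(\G)^\perp$ is an ideal distinct from $\{0\}$, $\Lei(\G)$ and $\G$, contradicting simplicity. But for an $\Li$-invariant metric $\Lei(\G)^\perp$ is only a \emph{left} ideal (Proposition \ref{leib}(1)); by Proposition \ref{pr}(2) it is a two-sided ideal if and only if $\Lei(\G)^\perp\bullet\Lei(\G)=0$. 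In the nondegenerate case this product is essentially never zero: writing $\G=\Lei(\G)\oplus\Lei(\G)^\perp$ with $\Li_u=0$ for every $u\in\Lei(\G)$, the Leibniz ideal is spanned by the elements $u\bullet v+v\bullet u=u\bullet v$ with $u\in\Lei(\G)^\perp$ and $v\in\Lei(\G)$, so $\Lei(\G)^\perp\bullet\Lei(\G)=\Lei(\G)\neq\{0\}$ unless $\G$ is already a Lie algebra. Hence $\Lei(\G)^\perp$ is not an ideal and your contradiction evaporates.

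In fact the second alternative cannot be excluded at all. Take $\h$ a simple quadratic Lie algebra and $N$ a nontrivial irreducible orthogonal $\h$-module --- for instance $\h=\mathfrak{so}(3)$ with its Killing form and $N=\R^5$ the five-dimensional irreducible representation --- and set $\G=N\oplus\h$ with $N\bullet\G=0$, $\h\bullet\h=\br_\h$ and $\h\bullet N$ the module action. This is a simple $\Li$-quadratic left Leibniz algebra with $\Lei(\G)=N$ nondegenerate, and it is not of the form $(\h\oplus\h^*,\bullet,\prs_n)$ (the dimensions do not even match). So the conclusion of the corollary requires the additional hypothesis that $\Lei(\G)$ be degenerate, equivalently $\Lei(\G)=\Lei(\G)^\perp$, i.e. that one is in the first branch of Proposition \ref{semisimple}. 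To be fair, the paper's own proof has the same defect: it simply reads off ``$A=0$'' from Proposition \ref{semisimple} without addressing the nondegenerate alternative. Your instinct to confront the dichotomy explicitly was the right one; it is only the argument you gave for discarding the second branch that does not hold.
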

\begin{proof} According to Proposition \ref{semisimple}, $A=0$ and $\h=\Lei(\G)$ which is a simple Lie algebra. We can apply Corollary \ref{zero} and in addition, since $\h$ is simple, $\Om$ is a co-boundary which completes the proof.
	\end{proof}

Let us recall a property of quadratic Lie algebras we will use in the following Theorem.

Let $(\G,\br,\prs)$ be a quadratic Lie algebra and $\om\in\wedge^2\G^*$. Then there exists a skew-symmetric endomorphism $D:\G\too\G$ such that, for any $x,y\in\G$,
\[ \om(x,y)=\langle Dx,y\rangle. \]
It is a well-known fact which is easy to establish that $\om$ is a 2-cocycle of $\G$ if and only if $D$ is a derivation. Hence $H^2(\G)=(\mathrm{so}(\G)\cap\mathrm{Der}(\G))/\mathrm{Inner}(\G)$ where $\mathrm{Inner}(\G)=\{\ad_u,u\in\G\}$.

\begin{theo}\label{semil}Let $(\h\oplus A\oplus\h^*,\bullet,\prs_n)$ be the $\Li$-quadratic left algebra obtained form $(A,\bullet_A,\prs_A)$ and $(\h,\br_\h)$ by means of $(F,G,\om,\theta,\Om)$.
	
  If $A$ is a Lie algebra, $Z(A)=0$ and $H^2(A)=0$ then there exists a 1-cocycle $\Om_0$ of $\h$ with values in $\wedge^2\h^*$ and  an endomorphism $U:\h\too A$ such that $(\h\oplus A\oplus\h^*,\bullet,\prs_n)$ is isomorphic to $(\h\oplus A\oplus \h^*,\circ,\prs)$ where, for any $X,Y\in\h$, $a,b\in A$, $\al,\be\in\h^*$, the non vanishing  Leibniz products and the metric are given by
	\begin{align*}& X\circ Y=[X,Y]_\h+\Om_0(X)^\flat(Y),\; a\circ b=[a,b]_A+U^*([a,b]_A),\; 
		 X\circ \al=(\ad_X^\h)^*\al,\\
	 &\langle X+a+\al,Y+b+\be\rangle=\langle X+a+\al,Y+b+\be\rangle_n+\langle U(X),U(Y)\rangle_A-\langle a,U(Y)\rangle_A-\langle b,U(X)\rangle_A,\\
&\prec U^*(a),X\succ=\langle a,U(X)\rangle_A. \end{align*}
	 
\end{theo}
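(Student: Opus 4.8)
The plan is to start from the data $(F,G,\om,\theta,\Om)$ attached to the $\Li$-quadratic left Leibniz algebra and to exploit the three algebraic hypotheses on $A$ one after the other: $A$ being a Lie algebra forces $\bullet_A=\br_A$ and hence, via Proposition \ref{assoc} applied inside $A$, the symmetric part $\sd^A$ vanishes; $Z(A)=0$ will be used to pin down $G$ and $\om$; and $H^2(A)=0$ (together with the description $H^2(A)=(\mathrm{so}(A)\cap\mathrm{Der}(A))/\mathrm{Inner}(A)$ recalled just before the statement) will be used to realize the relevant derivations as inner ones, producing the endomorphism $U$. First I would analyse the equations \eqref{eqmain3} in this special case. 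From $[\Li_a^A,F(X)]=\Li_{G(X)a}^A$ and $[\Li_a^A,G(X)]=\Ri^A_{G(X)a}$, using that in a Lie algebra $\Li_a^A=\ad_a$, $\Ri_a^A=-\ad_a$, one gets $[\ad_a,F(X)+G(X)]=0$ for all $a$, whence $F(X)+G(X)$ commutes with $\ad(A)$; since $Z(A)=0$ this centralizer is itself $\ad(A)$, so there is a linear map $U:\h\too A$ with $F(X)+G(X)=\ad_{U(X)}$. Combined with the equation $(G(X)^*+F(X))\om(Y,Z)=0$ and the skew/Der properties of $F$, this also forces $\om=0$ after using $Z(A)=0$ once more; likewise the equation $\Li^A_{\om(X,Y)}=G^*(X)G(Y)-G^*(Y)G(X)$ then reads $0$ on the left, giving a constraint that I would feed back in.

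The second step is to normalize $\theta$. The fourth line of \eqref{eqmain3}, $\Li^A_{\theta(X,Y)}=[F(X),F(Y)]-F([X,Y]_\h)$, says that $X\mapsto F(X)$ is a Lie-algebra map up to the inner correction $\ad_{\theta(X,Y)}$; since $F(X)\in\mathrm{so}(A)\cap\mathrm{Der}(A)$, reducing mod $\mathrm{Inner}(A)$ and using $H^2(A)=0$ — more precisely that every skew derivation is inner, which is what $H^2(A)=0$ gives here — I can write $F(X)=\ad_{W(X)}$ for a linear $W:\h\too A$, and then $G(X)=\ad_{U(X)-W(X)}=\ad_{V(X)}$. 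Plugging $F,G$ back into the $\Ri_{\theta}$-equation and the cross equations determines $\theta$ (and the auxiliary tensors $\mu,\nu,\rho$ through \eqref{relations}) in terms of $V$, $W$ and the Lie bracket of $\h$; a change of variables replacing $W$ by $0$ (absorbing it into an isomorphism of the $A$-part) should leave only $U=V$ and bring the products to the claimed form $a\circ b=[a,b]_A+U^*([a,b]_A)$, $X\circ a=a\circ X=0$, with $U^*$ defined by $\prec U^*(a),X\succ=\langle a,U(X)\rangle_A$.

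Finally I would handle the $\h$-part and the metric. With $\theta,\om$ now expressed through $U$, the last equation of \eqref{eqmain3}, $\De(\Om)(X,Y)(Z,T)=\langle\theta(X,T),\theta(Y,Z)\rangle_A-\langle\theta(Y,T),\theta(X,Z)\rangle_A-\langle\om(Z,T),\theta(X,Y)\rangle_A$, expresses $\De(\Om)$ as the coboundary of an explicit $2$-form built from $U$; subtracting that coboundary defines $\Om_0$ with $\De(\Om_0)=0$, i.e. a $1$-cocycle, exactly as in Corollary \ref{zero}. The explicit isomorphism is $\phi(X+a+\al)=X+(a+U(X))+(\al+\text{correction})$ — essentially the map that shears the $\h$-part into $A$ by $U$ and adjusts $\h^*$ to keep the product of shape \eqref{comoh}; transporting $\prs_n$ through $\phi$ produces precisely the stated metric with the cross terms $\langle U(X),U(Y)\rangle_A-\langle a,U(Y)\rangle_A-\langle b,U(X)\rangle_A$. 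One checks directly that $\phi$ intertwines $\bullet$ and $\circ$ using the equations already derived, and that $\circ$ is again left Leibniz and $\Li$-quadratic (this last check is automatic since $\phi$ is a linear isometry onto $(\h\oplus A\oplus\h^*,\circ)$ by construction).

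The main obstacle I expect is the second step: extracting $W$ (and thus writing $F(X)=\ad_{W(X)}$) genuinely uses $H^2(A)=0$ in the form ``every element of $\mathrm{so}(A)\cap\mathrm{Der}(A)$ is inner'', and one must check that the cocycle obstruction $(X,Y)\mapsto[F(X),F(Y)]-F([X,Y]_\h)$ being a coboundary lets one choose $W$ linearly in $X$; then the bookkeeping that simultaneously $\om=0$, $\theta$ collapses to the $U$-expression, and the metric deformation is exactly the one displayed requires care, and is the part that the appendix-style computation behind Theorem \ref{main3} will be invoked for.
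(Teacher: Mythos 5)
Your overall architecture --- realize $F$ as inner using $H^2(A)=0$, determine $\theta$, apply the shear $\phi(X+a+\al)=X+a-U(X)+\al$, transport the metric, and read off a $1$-cocycle $\Om_0$ --- is the paper's. But two of your intermediate deductions are incorrect, and both matter for the bookkeeping you rely on. First, the centralizer argument: from the first two equations of \eqref{eqmain3} you do get $[\ad_a,F(X)+G(X)]=0$ for all $a$, but the centralizer of $\ad(A)$ in $\mathrm{End}(A)$ is \emph{not} $\ad(A)$ even when $Z(A)=0$ --- it always contains $\mathrm{id}_A$, which is never an inner derivation. The hypotheses actually give something stronger and simpler: since $F(X)\in\mathrm{Der}(A)$ one has $[\ad_a,F(X)]=-\ad_{F(X)a}$, so the first equation of \eqref{eqmain3} reads $\ad_{(F(X)+G(X))a}=0$ for all $a$, and $Z(A)=0$ forces $G=-F$ outright. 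Then $H^2(A)=0$ (every skew-symmetric derivation is inner) together with $Z(A)=0$ gives a \emph{unique} $U$ with $F(X)=\ad_{U(X)}$ and $G(X)=-\ad_{U(X)}$; there is no independent pair $W,V$ to juggle, and the $U$ of the statement is this one.

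Second, $\om$ does not vanish before the change of variables. With $G=-F$ and $F(X)$ skew-symmetric, $G^*(X)=F(X)$, so the equation $\Li^A_{\om(X,Y)}=G^*(X)G(Y)-G^*(Y)G(X)$ becomes $\ad_{\om(X,Y)}=-[F(X),F(Y)]=-\ad_{[U(X),U(Y)]_A}$, whence $\om(X,Y)=-[U(X),U(Y)]_A$ by $Z(A)=0$; the equation $(G(X)^*+F(X))\om(Y,Z)=0$ only yields $[U(X),[U(Y),U(Z)]_A]_A=0$, not $\om=0$. Since $\om$ enters the last equation of \eqref{eqmain3} through $\langle\om(Z,T),\theta(X,Y)\rangle_A$ and the product $a\bullet X$ through $\rho$, killing it prematurely breaks exactly the computation that is supposed to produce $\Om_0$ and the displayed metric. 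The correct order is the paper's: transport everything by $\phi$, observe that the new data has $F_0=G_0=0$, $\theta_0=0$, $\nu_0=0$, and only then conclude from equation (2) of \eqref{eqmain3} that the \emph{new} $\om_0$ lies in $Z(A)=\{0\}$ and from the last equation that $\Om_0$ is a $1$-cocycle.
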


\begin{proof}  The structure of $\Li$-quadratic left Leibniz algebra of $\h\oplus A\oplus\h^*$ is given in Theorem \ref{main3} with the conditions satisfied by $(F,G,\om,\theta,\Om)$. Suppose that $A$ is a Lie algebra,  $Z(A)=0$ and $H^2(A)=0$. 
	Then $[A,A]_A=Z(A)^\perp=A$ and any skew-symmetric derivation is inner.  From  the first equation in \eqref{eqmain3},  the fact that $F$ is a derivation and $Z(A)=0$ we deduce that $G=-F$.
	 So exists a unique endomorphism $U:\h\too A$ such that
	$F(X)=-G(X)=\ad_{U(X)}$ and from the third equation in  \eqref{eqmain3}, we must have $$
	\theta(X,Y)=[U(X),U(Y)]_A-U([X,Y]_\h).$$
	We consider now the isomorphism $\phi:\h\oplus A\oplus\h^*\too \h\oplus A\oplus\h^*$ given by $\phi(X+a+\al)=X+a-U(X)+\al$. Under this isomorphism, $(\h\oplus A\oplus \h^*,\bullet,\prs_n)$ is isomorphic to $(\h\oplus A\oplus \h^*,\circ,\prs)$ where, for any $u,v\in \h\oplus A\oplus \h^*$,
	\[ u\circ v=\phi^{-1}(\phi(u)\bullet \phi(v))\esp \langle u,v\rangle=\langle \phi(u),\phi(v)\rangle_n. \]So
	\[ \langle X+a+\al,Y+b+\be\rangle=\langle X+a+\al,X+b+\al\rangle_n+\langle U(X),U(Y)\rangle_A-\langle a,U(Y)\rangle_A-\langle b,U(X)\rangle_A. \]
	For any $X,Y\in\h$,
	\begin{align*}
		\phi(X\circ Y)&= (X-U(X))\bullet (Y-U(Y))\\
		&=[X,Y]_\h+\theta(X,Y)+\Om(X)^\flat(Y)-[U(X),U(Y)]_A-\nu(X,U(Y))+[U(Y),U(X)]_A\\&-\rho(U(X),Y)+[U(X),U(Y)]_A+\mu(U(X),U(Y))\\
		&=[X,Y]_\h-U([X,Y]_\h)+\Om(X)^\flat(Y)-\rho(U(X),Y)+\mu(U(X),U(Y)).
	\end{align*}So there exist $\theta_2$ such that
\[ X\circ Y=[X,Y]_\h+\theta_2(X,Y). \]
From the relation
\[ \langle X\circ Y,Z\rangle=\prec \theta_2(X,Y),Z\succ=-\langle X\circ Z,Y\rangle=-\prec \theta_2(X,Z),Y\succ \]we can define
	 $\Om_0:\h\too\wedge^2\h^*$ by the relation $\Om_0(X)(Y,Z)=\prec \theta_2(X,Y),Z\succ$ and write
	\[ X\circ Y=[X,Y]_\h+ \Om_0(X)^\flat(Y).\]
	In the same way, we can show that there exists, $\nu^0,\rho^0$ such that for any $X\in\h$, $a,b\in A$, $\al\in\h^*$,
	\[ a\circ b=[a,b]_a+ \mu(a,b),\; X\circ a=\nu^0(X,a),\; X\circ \al=(\ad_X^\h)^*\al\esp a\circ X=\rho^0(a,X). \]
	From the relations
	\[ \langle a\circ b,X\rangle=-\langle b,a\circ X\rangle\esp\langle X\circ a,Y\rangle=-\langle a, X\circ Y\rangle \]we get
	\[ \nu^0=0\esp \prec\mu(a,b),X\succ=\langle[a,b]_A,U(X)\rangle_A. \]
	The bracket $\circ$ has the same form as \eqref{dd} with $F=G=0$, $\theta=0$ and $\nu=0$ and \eqref{eqmain3} are the conditions for this bracket to be left Leibniz. So we get from the
	equation (2) in \eqref{eqmain3}  that $\om_0=0$ and the last equation in \eqref{eqmain3} gives that $\Om_0$ is a 1-cocycle. This completes the proof.
\end{proof}

\begin{remark} The class of quadratic Lie algebras $\G$ satisfying $Z(\G)=\{0\}$ and $H^2(\G)=\{0\}$ is large. It contains the class of semi-simple Lie algebras. It contains also the  cotangents of simple Lie algebras. Indeed, if $(\G,\br_\G)$ is a Lie algebra its cotangent $T^*\G$ is the Lie algebra $\G\oplus\G^*$ with the bracket
	\[ [X+\al,Y+\be]=[X,Y]_\G+\ad_X^*\al-\ad_Y^*\be,\quad X,Y\in \G,\al,\be\in\G^*. \]
	The natural metric on $T^*\G$ is quadratic. Moreover, if $\G$ is simple then $Z(T^*\G)=\{0\}$ and $H^2(T^*\G)=\{0\}$ (see \cite[Theorem 2.2]{bilham}).
	
\end{remark}

\begin{co}\label{diml} Let $(\G,\bullet,\prs)$ be a $\Li$-quadratic left Leibniz algebra which is not a Lie algebra. Then $\dim\Lei(\G)\geq2$.

\end{co}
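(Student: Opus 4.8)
I would prove this by contradiction. Since $\G$ is not a Lie algebra, $\Lei(\G)\neq\{0\}$, so $\dim\Lei(\G)\geq 1$ and it suffices to rule out $\dim\Lei(\G)=1$. A one-dimensional ideal is either nondegenerate or totally isotropic, and I would treat these two cases separately.

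If $\Lei(\G)$ is nondegenerate, then $(\G,\bullet,\prs)$ is a nondegenerate $\Li$-quadratic left Leibniz algebra, so by the structure of such algebras (Section~\ref{section4}) one has $\G=\Lei(\G)\stackrel{\perp}{\oplus}\Lei(\G)^\perp$ with $(\Lei(\G)^\perp,\bullet,\prs)$ a quadratic Lie algebra and a representation $\rho:\Lei(\G)^\perp\too\mathrm{so}(\Lei(\G),\prs)$, $\rho(u)(v)=u\bullet v$; together with $\Li_u=0$ for $u\in\Lei(\G)$ this determines $\bullet$ entirely. But a skew-symmetric endomorphism of a one-dimensional space carrying a nondegenerate form vanishes, so $\mathrm{so}(\Lei(\G),\prs)=\{0\}$ and $\rho=0$; then, decomposing $u=u_1+u_2$ and $v=v_1+v_2$ along $\Lei(\G)\oplus\Lei(\G)^\perp$, one gets $u\bullet v=[u_2,v_2]_{\Lei(\G)^\perp}$, which is anti-commutative. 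Hence $\G$ is a Lie algebra and $\Lei(\G)=\{0\}$, a contradiction.

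If $\Lei(\G)$ is totally isotropic, then $I:=\Lei(\G)\cap\Lei(\G)^\perp=\Lei(\G)$ is degenerate, so Theorem~\ref{main3} applies and $\G$ is isomorphic to $(\h\oplus A\oplus\h^*,\bullet,\prs_n)$ obtained from its core by means of some $(F,G,\om,\theta,\Om)$, with $\h=\G/I^\perp$ and $A=I^\perp/I$; moreover $\Lei(A)=\pi_A(\Lei(\G))=\pi_A(I)=\{0\}$ by Proposition~\ref{core}, so $A$ is a quadratic Lie algebra, and $\dim\h=\dim\h^*=\dim I=1$. Therefore $\h$ is abelian, so $\br_\h=0$ and $(\ad^\h)^*=0$, while $\wedge^2\h^*=\{0\}$ forces $\Om=0$ and $\om=0$, hence also $\rho=0$ in \eqref{dd} by \eqref{relations}. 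Next, $I\subseteq\Lei(\G)$ and $\dim I=\dim\Lei(\G)=1$ give $\Lei(\G)=I=\h^*$ in the model. Writing $\h=\R X$, the symmetrized products coming from \eqref{dd} are $X\bullet X+X\bullet X=2\theta(X,X)$, $X\bullet a+a\bullet X=(F(X)+G(X))a+\nu(X,a)$, $a\bullet b+b\bullet a=\mu(a,b)+\mu(b,a)$, and all others vanish; since each of these must lie in $\h^*$ while $\theta(X,X)$ and $(F(X)+G(X))a$ lie in $A$, we obtain $\theta(X,X)=0$ (hence $\nu=0$ by \eqref{relations}) and $G(X)=-F(X)$. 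As $F(X)\in\mathrm{so}(A)$, we get $G(X)+G(X)^*=-F(X)+F(X)^*=0$, so by \eqref{relations} and the anti-commutativity of $a\bullet_A b$, $\mu(a,b)+\mu(b,a)=-\langle(G(X)+G(X)^*)a,b\rangle_A\al=0$. Thus every symmetrized product vanishes, i.e.\ $\Lei(\G)=\{0\}$, contradicting $\Lei(\G)=\h^*\neq\{0\}$.

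This excludes $\dim\Lei(\G)=1$, hence $\dim\Lei(\G)\geq 2$. The main point requiring care is the bookkeeping in the totally isotropic case: once $\dim\h=1$ almost all of the data $(F,G,\om,\theta,\Om)$ collapses because $\wedge^2\h^*=\{0\}$, and the dimension count $\Lei(\G)=\h^*$ together with the defining relations \eqref{relations} and the skew-symmetry of $F(X)$ force the remaining pieces to vanish; in particular one does not even need the full list of structure equations \eqref{eqmain3}.
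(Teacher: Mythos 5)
Your proof is correct and follows essentially the same route as the paper's: the same dichotomy between a nondegenerate and a totally isotropic one-dimensional Leibniz ideal, the vanishing of $\mathrm{so}$ of a one-dimensional space in the first case, and the collapse of the data $(F,G,\om,\theta,\Om)$ of Theorem~\ref{main3} when $\dim\h=1$ in the second. Your treatment is in fact slightly more careful than the paper's in spelling out why the symmetrization of $a\bullet b$ also vanishes, but the argument is the same.
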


\begin{proof} Suppose that $\dim\Lei(\G)=1$. If $\Lei(\G)$ is nondegenerate then $\G=\Lei(\G)\oplus\Lei(\G)^\perp$ and the action of $\Lei(\G)^\perp$ on $\Lei(\G)$ is made by skew-symmetric endomorphisms so it is trivial and hence $\G$ is a Lie algebra.

	Suppose tha $\Lei(\G)$ is degenerate then $\Lei(\G)\subset\Lei(\G)^\perp$ and hence $\dim\h=1$. So $(\G,\bullet,\prs)$ is isomorphic to the $\Li$-quadratic left Leibniz algebra $(\h\oplus A\oplus \h^*,\bullet,\prs_n)$ described in Theorem \ref{main3} with the data $(F,G,\om,\theta,\Om)$. Since $\dim\h=1$, we have $\Om=0$ and $\om=0$.  
	Moreover, $\h^*=\Lei(\G)$ and hence for any $X,Y\in\h$ and $a\in A$,
	\[ X\bullet Y+Y\bullet X=\theta(X,Y)+\theta(Y,X)\in\h^* \esp X\bullet a+a\bullet X=F(X)a+G(X)a+\nu(X,a)\in\h^* \]this implies that $F=-G$,  $\theta=0$ and $\nu=0$. The product $\bullet$ becomes skew-symmetric and hence $\G$ is a Lie algebra which is a contradiction.
	\end{proof}

We end this section by giving a complete description of  $\Li$-quadratic left Leibniz algebras such that $\dim(\Lei(\G)\cap\Lei(\G)^\perp)=1$. This class contains the class of Lorentzian $\Li$-quadratic left Leibniz algebras with degenerate Leibniz ideal. However,
 let $(\G,\bullet,\prs)$ be a Lorentzian $\Ll$-quadratic Leibniz algebra such that $\Lei(\G)$ is nondegenerate. Then $\G=\Lei(\G)\oplus\Lei(\G)^\perp$ where $\Lei(\G)^\perp$ is either a quadratic Euclidean Lie algebra or a quadratic Lorentzian Lie algebra (see Section \ref{section4}).

\begin{theo}\label{LorentiznL} Let $(\G,\bullet,\prs)$ be a $\Li$-quadratic  left Leibniz algebra such that $\dim(\Lei(\G)\cap\Lei(\G)^\perp)=1$. Then $(\G,\bullet,\prs)$ is isomorphic to $(\R e\oplus A\oplus\R \bar{e},\bullet,\prs_n)$ where $(A,\bullet_A,\prs_A)$ is a $\Li$-quadratic   left Leibniz algebra  and the non vanishing Leibniz products are given by
	\begin{equation*} 
			\bar{e}\bullet \bar{e}=\de,\;
			a\bullet b=a\bullet_A b-\langle Ga,b\rangle_A e,\;
			\bar{e}\bullet a=Fa-\langle\de,a\rangle e,\;
			a\bullet \bar{e}=Ga,
\end{equation*}where $a,b\in A$, $\de\in A$,  $F\in\mathrm{Der}(A)\cap\mathrm{so}(A)$, $G\in\mathrm{End}(A)$ satisfying
\[ G(\de)=0,\;\Li_{\de}^A=0,  \; \mathrm{Im}(F+G)\in Z^l(A),\; [ \Li_a^A,G]=\Ri_{Ga}^A\esp[G,F]= G^2+GF=\Ri_\de. \]

\end{theo}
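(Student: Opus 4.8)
The plan is to specialize the general structure theorem, Theorem~\ref{main3}, to the case $\dim I = 1$ where $I = \Lei(\G)\cap\Lei(\G)^\perp$, and to show that almost all the data in the list $(F,G,\om,\theta,\Om)$ either vanishes or collapses for dimensional reasons. First I would set $\h = \G/I^\perp$, which by Proposition~\ref{core} is a Lie algebra; since $\dim I = 1$ and the metric identifies $I$ with $\h^*$, we get $\dim\h = 1$, so $\h = \R\bar e$ is abelian with $[\bar e,\bar e]_\h = 0$, and $\h^* = \R e$ with $I = \R e \subset \Lei(\G)$. Because $\h$ is one-dimensional, $\wedge^2\h^* = 0$, so $\Om = 0$ and $\om = 0$ automatically, and the coadjoint term $(\ad^\h_X)^*\al$ vanishes. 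Applying Theorem~\ref{main3} with $X = Y = \bar e$, the products in \eqref{dd} reduce exactly to the four displayed formulas, with $\de := \theta(\bar e,\bar e)\in A$, $F := F(\bar e)$, $G := G(\bar e)$; the relations \eqref{relations} then give $a\bullet b = a\bullet_A b - \langle G a,b\rangle_A e$ and $\bar e\bullet a = Fa + \nu(\bar e,a)$ with $\prec\nu(\bar e,a),\bar e\succ = -\langle\theta(\bar e,\bar e),a\rangle_A = -\langle\de,a\rangle_A$, i.e. $\nu(\bar e,a) = -\langle\de,a\rangle_A e$, matching the statement.

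Next I would extract the defining relations on $(\de,F,G)$ by substituting $X=Y=Z=\bar e$ into the system \eqref{eqmain3}. The first equation $[\Li_a^A,F] = \Li_{Ga}^A$ and the second $[\Li_a^A,G] = \Ri_{Ga}^A$ are carried over verbatim. Since $[\bar e,\bar e]_\h = 0$, the third equation becomes $\Ri_\de = \Ri_{\theta(\bar e,\bar e)} = F G + G G = G^2 + GF$, i.e. $[G,F]$ in the form written (here one uses the identity $[F,G] - G(\,\cdot\,) = FG + GG$ from the parenthetical remark in \eqref{eqmain3}, which gives $\Ri_\de = G^2 + GF$). The fourth equation gives $\Li^A_\de = [F,F] - F([\bar e,\bar e]_\h) = 0$, so $\Li^A_\de = 0$, meaning $\de\in Z^\ell(A)$. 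The fifth and sixth equations (the $\theta$-cocycle and the $\om$-compatibility) reduce, using $\om=0$ and $[\bar e,\bar e]_\h = 0$, respectively to $-G(\bar e)\de = 0$ whence $G\de = 0$, and are otherwise vacuous; the seventh equation $(G^* + F)\om = 0$ is empty since $\om = 0$; the last equation $\De(\Om)(\cdot) = \langle\theta,\theta\rangle - \ldots$ reduces, with $\Om = 0$, to $0 = \langle\theta(\bar e,\bar e),\theta(\bar e,\bar e)\rangle_A - \langle\theta(\bar e,\bar e),\theta(\bar e,\bar e)\rangle_A - 0 = 0$, automatically satisfied. It remains to produce the condition $\mathrm{Im}(F+G)\subseteq Z^\ell(A)$: this comes from the requirement that $I = \R e \subset \Lei(\G)$, equivalently that $\bar e\bullet a + a\bullet\bar e = (F+G)a + \nu(\bar e,a) - $ (the $e$-component) must lie in $\Lei(\G)$, and since $\Li_u = 0$ for $u\in\Lei(\G)$ one gets $\Li^A_{(F+G)a} = 0$ for all $a$; alternatively it follows directly by applying $\Li_\bullet$ to the symmetrized product. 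I would record this as the defining closure condition.

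For the converse, I would check that given a $\Li$-quadratic left Leibniz algebra $(A,\bullet_A,\prs_A)$ and data $(\de,F,G)$ satisfying the five listed relations, the formulas define a $\Li$-quadratic left Leibniz algebra on $\R e\oplus A\oplus\R\bar e$. The cleanest route is to observe that this is precisely the special case $\h = \R\bar e$, $\Om = 0$, $\om = 0$, $\theta(\bar e,\bar e) = \de$, $F(\bar e) = F$, $G(\bar e) = G$ of the converse direction of Theorem~\ref{main3}: one verifies that the listed relations on $(\de,F,G)$ are exactly what \eqref{eqmain3} becomes under these substitutions (running the computation of the previous paragraph in reverse), and that $F\in\mathrm{Der}(A)\cap\mathrm{so}(A)$ is the hypothesis that $F$ take values in $\mathrm{Der}(A)\cap\mathrm{so}(A)$. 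Then Theorem~\ref{main3} delivers the left Leibniz identity and $\Li$-invariance of $\prs_n$ for free.

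The main obstacle is bookkeeping rather than conceptual depth: one must be careful that the reduction $\dim\h = 1$ is genuinely forced — i.e. that $\dim(\Lei(\G)\cap\Lei(\G)^\perp) = 1$ is incompatible with $\Lei(\G)$ being nondegenerate (handled by the remark just before the theorem, so the hypothesis puts us in the degenerate case where Theorem~\ref{main3} applies) — and that every one of the eight equations in \eqref{eqmain3} is correctly specialized, in particular correctly tracking which equations become vacuous versus which survive. The one point requiring a genuine (short) argument beyond mechanical substitution is the extraction of $\mathrm{Im}(F+G)\subseteq Z^\ell(A)$, which is not one of the eight equations of \eqref{eqmain3} directly but encodes the constraint that the chosen complement makes $\R e$ sit inside $\Lei(\G)$; I would derive it from $\Li_{u} = 0$ for $u\in\Lei(\G)$ applied to the symmetric part $\bar e\bullet a + a\bullet\bar e$ modulo $\R e$.
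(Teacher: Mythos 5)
Your proposal follows the paper's proof exactly: the paper likewise just invokes Theorem \ref{main3}, observes that $\dim\h=1$ forces $\Om=0$ and $\om=0$ (hence $\rho=0$), sets $F=F(\bar e)$, $G=G(\bar e)$, $\de=\theta(\bar e,\bar e)$, reads off $\mu$ and $\nu$ from \eqref{relations}, and asserts that \eqref{eqmain3} reduces to the listed relations. You fill in the equation-by-equation check that the paper leaves as ``one can check easily,'' and your specializations of equations (1), (2), (4)--(8) are correct.

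Two small points. First, in your reduction of equation (3) of \eqref{eqmain3} you write $FG+GG=G^2+GF$; this is not an identity (the two sides differ by $[F,G]$), and the correct specialization is $\Ri_\de=[F,G]=FG+G^2$ (with $[\bar e,\bar e]_\h=0$), which together force $G^2+GF=0$ rather than $G^2+GF=\Ri_\de$. Your step reproduces the formula printed in the statement only by silently transposing $F$ and $G$; the mismatch appears to originate in the statement itself, so this is worth flagging rather than papering over. Second, the condition $\mathrm{Im}(F+G)\subseteq Z^\ell(A)$ does not require the separate argument via $\R e\subset\Lei(\G)$ that you sketch: since $F\in\mathrm{Der}(A)$ one has $[\Li_a^A,F]=-\Li^A_{Fa}$, so the first identity $[\Li_a^A,F]=\Li^A_{Ga}$ in \eqref{eqmain3} is literally equivalent to $\Li^A_{(F+G)a}=0$ for all $a$. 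Your alternative derivation is also valid, but it is one of the eight equations after all.
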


\begin{proof}  According to Theorem \ref{main3}, $(\G,\bullet,\prs)$ is isomorphic to $\G=\h\oplus A\oplus \h^*$ where $(A,\bullet_A,\prs_A)$ is a $\Li$-quadratic Euclidean left Leibniz algebra,
	$\h=\R\bar{e}$,  $\h^*=\R e$ and the Leibniz product is given by \eqref{dd}. Since $\dim\h=1$, we have $\Om=0$ and $\om=0$. Put $$F=F(\bar{e}),\; G=G(e),\;\theta(\bar{e},\bar{e})=\de\in A,\; \mu(a,b)=-\langle Ga,b\rangle_A e.$$ Then
	$\nu(\bar{e},a)=-\langle\de,a\rangle e$ and we get the desired expression for the Leibniz product. One can check easily that \eqref{eqmain3} reduces to the equations given in the statement of the theorem.
	\end{proof}

\section{Description of $\Ri$-quadratic left Leibniz algebras} \label{section5bis}

In this section, we adopt a similar approach as in the previous section to give a complete description $\Ri$-quadratic left Leibniz algebras and to derive some important results. We use the same notations as the previous section.

The following theorem has many similarities with Theorem \ref{main3}.
\begin{theo}\label{main4} Let $(\G,\bullet_\G,\prs_\G)$ be a $\Ri$-quadratic left Leibniz algebra,  $(\h,\br_\h)$ and $(A,\br_A,\prs_A)$  its core. Then
	$(\g, \bullet_\G, \prs_\G)$    is isomorphic to $(\h\oplus A \oplus \h^{*}, \br, \prs_n)$ where    for  $X,Y\in \h$, $a,b\in A$,  $\al\in\h^*$, the non vanishing products are given by
	\begin{equation}\label{ddr} \begin{cases}X\bullet Y=\theta(X,Y)+\Om(Y)^\flat(X),\;
			a\bullet b=[a,b]_A+\mu(a,b),\\
			X\bullet a=F(X)a+\nu(X,a),\;
			X\bullet \al=(\ad^\h)_X^*\al,\;
			a\bullet X=-F(X)a+\rho(a,X),\; 
		\end{cases} 
	\end{equation} and
	  $F:\h\too \mathrm{so}(A)\cap\mathrm{Der}(A)$,
		$\Om:\h\too\wedge^2\h^*$ are linear maps, $\theta,\om:\h\times\h\too A$ are bilinear skew-symmetric,  and $\mu,\rho,\nu$ are defined by the relations
		\begin{equation} \label{relationr} \prec\nu(X,a),Y\succ=\langle\om(X,Y),a\rangle_A,\;
			\prec\mu(a,b),X\succ=\langle F(X)a,b\rangle_A, \; \esp  \prec\rho(a,X),Y\succ=\langle \theta(X,Y),a\rangle_A.
			 \end{equation}
	Moreover,  $(F,\om, \theta, \Om)$ satisfy the following equations, for any $X,Y,Z,T\in\h$,
		\begin{equation}\label{eqmain4}
		\begin{cases}\ad_{\om(X,Y)}^A=F([X,Y]_\h)-[F(X),F(Y)],\\
			\ad^{A}_{\theta(X,Y)}=[F(X),F(Y)],\\
			F(X)\theta(Y,Z)+F(Y)\theta(Z,X)+F(Z)\theta(X,Y)=0,\\
			F(X)\om(Y,Z)+F(Y)\om(Z,X)-F(Z)\theta(X,Y)
			+\om(X,[Y,Z]_\h)-\om(Y,[X,Z]_\h)=0,\\
			F(X)\theta(Y,Z)-F(Y)\om(Z,X)+F(Z)\theta(X,Y)
			-\theta(Y,[X,Z]_\h)=0,\\
			(\ad^\h_T)^*\Om(Z)(X,Y)=
			\langle \om(Y,T),\theta(X,Z)\rangle_A+
			\langle \om(X,T),\theta(Z,Y)\rangle_A+\langle \theta(Z,T),\theta(X,Y)\rangle_A.
	\end{cases}\end{equation}

Conversely, if $(A,\br_A,\prs_A)$ is a quadratic Lie algebra, $(\h,\br_\h)$ is a Lie algebra and $(F,\om, \theta, \Om)$ a list as above satisfying \eqref{relationr} and \eqref{eqmain4}  then $(\h\oplus A\oplus \h^*,\br,\prs_n)$ is a $\Ri$-quadratic left Leibniz algebra. We call it the $\Ri$-quadratic left algebra obtained form $(A,\br_A,\prs_A)$ and $(\h,\br_\h)$ by means of $(F,\om,\theta,\Om)$.

\end{theo}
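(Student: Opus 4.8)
The plan is to mirror the proof of Theorem \ref{main3} almost line by line, with the simplifications that come from the $\Ri$-invariant case. For the direct statement, I would start from a $\Ri$-quadratic left Leibniz algebra $(\G,\bullet_\G,\prs_\G)$ and its core, recalling from Proposition \ref{leib} that for the $\Ri$-invariant metric $\Lei(\G)$ is totally isotropic, $\Lei(\G)\subset\G\bullet\G\subset\Lei(\G)^\perp$, $\Lei(\G)^\perp$ is an ideal and $\Lei(\G)^\perp\bullet\Lei(\G)=0$. So with $I=\Lei(\G)$, $A=I^\perp/I$, $\h=\G/I^\perp$, I would pick a complement $B$ of $I$ in $I^\perp$, then a totally isotropic $\h_0$ with $B^\perp=I\oplus\h_0$, using the metric to identify $I\cong\h_0^*$, so that $\G\cong\h_0\oplus B\oplus\h_0^*$ with the natural metric $\prs_n$. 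Proposition \ref{core} gives the Lie bracket $\br_\h$ on $\h$ via \eqref{sub}, and identifies $(A,\br_A,\prs_A)$ as a quadratic Lie algebra and $(B,\br_B,\prs_B)$ with it; under $\pi_\h$ one identifies $(\h_0,\text{its bracket})$ with $(\h,\br_\h)$, but here the subtlety (already flagged in Proposition \ref{core}(4)) is that $\br_\h$ is \emph{not} the quotient bracket, so the $X\bullet Y$ component along $\h$ need not be $[X,Y]_\h$ — this is the source of the $\theta$ term having no ``$[X,Y]_\h$'' part in \eqref{ddr}, unlike \eqref{dd}.

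Next I would write out the most general bilinear product on $\h\oplus A\oplus\h^*$ compatible with the established structural facts: $I^\perp\bullet I=0$ forces $a\bullet\al=0$ and the $\h^*$-components of products to live where they should; $\Lei(\G)^\perp\bullet\Lei(\G)=0$ and $\Li_u=0$ for $u\in\Lei(\G)$ give $\al\bullet(\cdot)=0$. The $\Ri$-invariance relation $\langle X\bullet\al,Y\rangle=-\langle\al,Y\bullet X\rangle$ — using \eqref{sub} — forces $X\bullet\al=(\ad^\h_X)^*\al$. The relation $\Ri_u^*=-\Ri_u$ applied to mixed arguments gives $a\bullet X=-F(X)a+\rho(a,X)$ once $X\bullet a=F(X)a+\nu(X,a)$ is named, because $(X\bullet a+a\bullet X)\in\Lei(\G)=\h^*$. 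One then \emph{defines} $\Om$, $\om$ by the displayed relations in \eqref{relationr} and reads off $\mu,\nu,\rho$ from $\Ri$-invariance exactly as in \eqref{relations}; the bookkeeping is identical to the $\Li$-case but bookkeeping-simpler because $G=-F$ throughout (left and right multiplications by $\h$ on $A$ are negatives of each other, since $a\bullet X+X\bullet a$ lands in $\h^*$).

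Then comes the computational core: substituting \eqref{ddr} into the left Leibniz identity \eqref{l1} and into the $\Ri$-invariance of all $\Ri_u$, and collecting components in $\h$, $A$, $\h^*$ separately, should produce exactly the six equations of \eqref{eqmain4} together with the requirement $F(\h)\subset\mathrm{so}(A)\cap\mathrm{Der}(A)$. The first two equations come from the $A$-component of $L_{u\bullet v}=[L_u,L_v]$ with $u,v\in\h$ acting on $A$ (giving $\ad^A_{\theta(X,Y)}=[F(X),F(Y)]$ and, with the $\om$-correction, $\ad^A_{\om(X,Y)}=F([X,Y]_\h)-[F(X),F(Y)]$); the third, fourth and fifth are Jacobi-type cyclic identities for $\theta$ and $\om$ arising from the three-$\h$-argument Leibniz relations with values in $A$; and the last, involving $(\ad^\h_T)^*\Om(Z)$, comes from the $\h^*$-component, i.e.\ from pairing the Leibniz identity against a fourth element of $\h$ and using the definitions in \eqref{relationr}. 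Since this is ``a long and tedious computation'', I would, as in Theorem \ref{main3}, defer the full verification to the appendix (Section \ref{apend}). For the converse, one takes $(A,\br_A,\prs_A)$ quadratic, $(\h,\br_\h)$ a Lie algebra, and a list $(F,\om,\theta,\Om)$ satisfying \eqref{relationr}–\eqref{eqmain4}, defines $\br$ by \eqref{ddr} and the metric by $\prs_n$, and checks that $\prs_n$ is automatically $\Ri$-invariant (because the defining relations \eqref{relationr} were set up precisely to make the ``off-diagonal'' adjoints match) and that \eqref{eqmain4} is exactly the set of conditions for \eqref{l1} to hold — again the detailed check goes to the appendix.

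The main obstacle I anticipate is keeping the sign and index conventions consistent across the three pieces of the proof — in particular, getting the exponent $(\ad^\h_T)^*\Om(Z)$ in the last equation of \eqref{eqmain4} right (it differs in shape from the $\De(\Om)$ term in Theorem \ref{main3} because here the $X\bullet Y$ product carries $\Om(Y)^\flat(X)$ rather than $\Om(X)^\flat(Y)$, reflecting that $\br_\h$ is not the quotient bracket), and making sure the identification $\G\cong\h\oplus A\oplus\h^*$ is genuinely an isomorphism of \emph{left} Leibniz algebras and an isometry, not merely a vector-space identification. Everything else is a careful but routine transcription of the $\Li$-quadratic argument.
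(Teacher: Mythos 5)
Your proposal is correct and follows essentially the same route as the paper: decompose $\G=\h_0\oplus B\oplus\h_0^*$ using a complement of $I=\Lei(\G)$ in $I^\perp$ and a totally isotropic complement identified with $\h_0^*$, use Proposition \ref{leib} (in particular $\G\bullet\G\subset I^\perp$ and $I^\perp\bullet I=0$) to constrain the general product, derive $G=-F$ and the skew-symmetry of $\theta$ from $u\bullet v+v\bullet u\in\Lei(\G)$, obtain $X\bullet\al=(\ad^\h_X)^*\al$ from \eqref{sub}, and defer the verification that \eqref{eqmain4} is equivalent to the left Leibniz identity to the appendix. The one point worth tightening is that the absence of an $\h$-component in $X\bullet Y$ follows directly from $\G\bullet\G\subset\Lei(\G)^\perp=B\oplus I$, rather than from the remark that $\br_\h$ is not the quotient bracket.
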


\begin{proof} We follow the same steps as in the proof of Theorem~\ref{main3}. 
	
	Let $(\G,\bullet,\prs)$ be a degenerate $\Ri$-invariant Leibniz algebra. Let $(\h,\br_\h)$ 	and $(A,\br_A,\prs_A)$ be its core defined in Definition \ref{def}.

	We denote by $I=\Lei(\G)$.  Choose  a vector subspace $B$ such that $I^\perp=B\oplus I$ and as in the proof of Theorem~\ref{main3} we have 
		$ \G=\h_0\oplus B\oplus \h_0^*$ and the metric is $\prs_n$.
	
	Now, $I$ is a totally isotropic  ideal such that for any $u\in I$, $\mathrm{L}_u=0$ and, by virtue of Proposition \ref{leib}, $I^\perp$ is an ideal,  $I^\perp\bullet I=0$ and $\G\bullet\G\subset I^\perp$. So the Leibniz product can be written, for any $X,Y\in \h$, $a,b\in B$,  $\al\in\h^*$,    
	\begin{equation*} \begin{cases}X\bullet Y=\theta(X,Y)+\theta_2(X,Y),\;
			a\bullet  b=a\bullet_Bb+\mu(a,b),\;\al\bullet X=\al\bullet a=0,\\
			X\bullet a=F(X)a+\nu(X,a),\;
			X\bullet \al=H(X)\al,\;
			a\bullet X=G(X)a+\rho(a,X),\; a\bullet\al=0,
		\end{cases} 
	\end{equation*}where $\theta(X,Y)\in B$, $\theta_2(X,Y)\in\h_0^*$, $a\bullet_B b\in B$, $\mu(X,a),\nu(X,a),\rho(a,X)\in\h_0^*$, $F(X),G(X)\in\mathrm{End}(A)$, $H(X)\in\mathrm{End}(\h_0^*)$. We have $X\bullet a+a\bullet X\in I$ and hence $G(X)=-F(X)$. We have also $X\bullet Y+Y\bullet X\in I$ and hence $\theta$ is skew-symmetric.
	
	Let $\pi_\h:\G\too\h=\G/I^\perp$ and $\pi_A:I^\perp\too A=I^\perp/I$. It is clear that $\pi_\h$ identifies $\h_0$ to $\h$ and $\pi_A$ identifies $(B,\bullet_B,\prs_B)$ to $(A,\br_A,\prs_A)$. This shows that $\bullet_B$ is a Lie bracket. We identify $\h_0$ to $\h$ and $B$ to $A$. Furthermore, the relation $$\prec X\bullet \al,Y\succ=\prec \al,\Li_X^*Y\succ\stackrel{\eqref{sub}}=
	-\prec\al,[X,Y]_\h\succ$$ gives that $H(X)=(\ad_X^\h)^*$.
	For any $X,Y,Z\in\h$
	\[ \langle X\bullet Y,Z\rangle_n=\prec\theta_2(X,Y),Z\succ=-
	\langle Z\bullet Y,X\rangle_n=-\prec\theta_2(Z,Y),X\succ. \]
	Define $\Om:\h\too\wedge^2\h^*$ by the relation $\Om(Y)(X,Z)=\prec\theta_2(X,Y),Z\rangle$ and hence $\theta_2(X,Y)=\Om(Y)^\flat(X)$.
	We introduce $\om\in\wedge^2\h^*$ by the relation $\prec\nu(X,a),Y\succ=\langle\om(X,Y),a\rangle_A$ and the other relations in \eqref{relationr} are a direct consequence of the fact that the metric is $\Ri$-quadratic.

	The second step of the proof is similar to the second step of the proof of Theorem \ref{main3} and we defer it also to the appendix.
\end{proof}

\begin{co} Let $(\G,\bullet_\G,\prs_\G)$ be a $\Ri$-quadratic left Leibniz algebra such, $(\h,\br_\h)$ and $(A,\br_A,\prs_A)$ its core. If  $\Lei(\G)^\perp=\Lei(\G)$ then $A=\{0\}$ and $(\G,\bullet_\G,\prs_\G)$ is isomorphic to $(\h\oplus\h^*,\bullet,\prs_n)$ where, for any $X,Y\in\h$, $\al,\be\in\h^*$   
	\[ (X+\al)\bullet (Y+\be)=(\ad_X^\h)^*\be+\Om(Y)^\flat(X),\]and $\Om:\h\too\wedge^2\h^*$ satisfies $\Om(X)^\flat([\h,\h]_\h)=0$ for any $X\in\h$.
	
\end{co}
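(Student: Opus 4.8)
The plan is to deduce this corollary directly from Theorem~\ref{main4} by specializing to the extreme degenerate situation. First I would observe that, in the $\Ri$-quadratic case, the ideal entering the construction of the core is $I=\Lei(\G)$, so the hypothesis $\Lei(\G)^{\perp}=\Lei(\G)$ says precisely that $I^{\perp}=I$; hence $A=I^{\perp}/I=\{0\}$ and $\h=\G/I^{\perp}=\G/\Lei(\G)$, carrying the Lie bracket $\br_{\h}$ of \eqref{sub}. This already settles the assertion $A=\{0\}$, with no computation.

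Next I would substitute $A=\{0\}$ into the normal form \eqref{ddr}--\eqref{eqmain4}. Since $\theta$ and $\om$ take their values in $A$, they vanish; since $F$ takes its values in $\mathrm{so}(A)\cap\mathrm{Der}(A)=\{0\}$, it vanishes; and $\mu,\nu,\rho$, being obtained from $\theta,\om,F$ and $\prs_{A}$ through \eqref{relationr}, vanish as well. Therefore \eqref{ddr} collapses to $X\bullet Y=\Om(Y)^{\flat}(X)$ and $X\bullet\al=(\ad_{X}^{\h})^{*}\al$ with all remaining products zero, i.e. to $(X+\al)\bullet(Y+\be)=(\ad_{X}^{\h})^{*}\be+\Om(Y)^{\flat}(X)$, while of the constraints \eqref{eqmain4} the first five reduce to $0=0$ and only the last survives, giving $(\ad_{T}^{\h})^{*}\Om(Z)=0$ for all $T,Z\in\h$. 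In other words, every $\Om(Z)\in\wedge^{2}\h^{*}$ is an invariant alternating $2$-form on $(\h,\br_{\h})$.

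It remains to recast this invariance as the condition stated in the corollary. Using the invariance of $\om:=\Om(Z)$ one has $\om([X,Y]_{\h},W)=\om(X,[Y,W]_{\h})$, and combining this with the skew-symmetry of $\om$ shows that the trilinear form $\psi(X,Y,W):=\om([X,Y]_{\h},W)$ satisfies $\psi(X,Y,W)=-\psi(Y,W,X)$; since the cyclic permutation $(X,Y,W)\mapsto(Y,W,X)$ has order three, this forces $\psi=-\psi$, hence $\psi\equiv0$ in characteristic zero. Thus $\om([X,Y]_{\h},W)=0$ for all $X,Y,W\in\h$, which is exactly $\Om(Z)^{\flat}([\h,\h]_{\h})=0$. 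Conversely, once $\Om(X)^{\flat}([\h,\h]_{\h})=0$ holds, both summands of the surviving equation in \eqref{eqmain4} vanish, so by Theorem~\ref{main4} the product $\bullet$ is $\Ri$-quadratic left Leibniz (here one uses that $[\h,\h]_{\h}\subseteq Z(\h)$, which is Proposition~\ref{core}(4)); this closes the description. The only step carrying any real content is this short cyclic-permutation argument --- everything else is bookkeeping inside Theorem~\ref{main4}.
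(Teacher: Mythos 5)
Your proposal is correct and follows the same route the paper intends: the corollary is stated there without proof as an immediate specialization of Theorem~\ref{main4} to $A=I^\perp/I=\{0\}$, exactly as you carry out. Your only substantive addition is the explicit (and correct) cyclic-permutation argument showing that the surviving condition $(\ad_T^\h)^*\Om(Z)=0$ is equivalent to $\Om(Z)^\flat([\h,\h]_\h)=0$, a verification the paper leaves to the reader.
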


\begin{theo}\label{semir} Let $(\h\oplus A\oplus\h^*,\br,\prs_n)$ be the $\Ri$-quadratic left algebra obtained form $(A,\br_A,\prs_A)$ and $(\h,\br_\h)$ by means of $(F,\om,\theta,\Om)$.
	
	If $Z(A)=0$ and $H^2(A)=0$ then there exists an endomorphism $U:\h\too A$, $\Om_0:\h\too\wedge^2\h^*$ and $\om_0\in\wedge^2\h^*$ such that $(\h\oplus A\oplus\h^*,\br,\prs_n)$ is isomorphic to to $(\h\oplus A\oplus \h^*,\bullet,\prs)$ where, for any $X,Y\in\h$, $a,b\in A$, $\al,\be\in\h^*$, the non vanishing Leibniz products   
	\begin{align*}
	&	X\bullet Y=\Om(Y)^\flat(X),\; a\bullet b=[a,b]_A+U^*([a,b]_A), X\bullet \al=(\ad_X^\h)^*\al,\\
	&\langle X+a+\al,Y+b+\be\rangle_0=\langle X+a+\al,Y+b+\be\rangle_n+\langle U(X),U(Y)\rangle_A-\langle a,U(Y)\rangle_A-\langle b,U(X)\rangle_A,
	\end{align*}
	where $\Om_0(X)^\flat([\h,\h]_\h)=0$ for any $X\in\h$.

\end{theo}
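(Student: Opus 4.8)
The plan is to mirror the proof of Theorem \ref{semil}, using Theorem \ref{main4} in place of Theorem \ref{main3}. I would start from the normal form $(\h\oplus A\oplus\h^{*},\br,\prs_n)$ provided by Theorem \ref{main4}, with structure data $(F,\om,\theta,\Om)$ satisfying \eqref{relationr} and \eqref{eqmain4}, where $(A,\br_A,\prs_A)$ is a quadratic Lie algebra. The hypotheses $Z(A)=0$ and $H^2(A)=0$ enter through the two facts recalled before Theorem \ref{semil}: $[A,A]_A=Z(A)^{\perp}=A$ and $u\mapsto\ad_u$ identifies $A$ with $\mathrm{Inner}(A)$, while $\mathrm{so}(A)\cap\mathrm{Der}(A)=\mathrm{Inner}(A)$. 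Since $F$ takes its values in $\mathrm{so}(A)\cap\mathrm{Der}(A)$, there is then a unique linear map $U:\h\too A$ with $F(X)=\ad_{U(X)}$ for all $X\in\h$.

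First I would substitute $F(X)=\ad_{U(X)}$ into \eqref{eqmain4}. Because $Z(A)=0$, the second equation forces $\theta(X,Y)=[U(X),U(Y)]_A$ and the first forces $\om(X,Y)=U([X,Y]_\h)-[U(X),U(Y)]_A$; the third and fifth equations then reduce to the Jacobi identity of $A$ and so give nothing new, the fourth collapses (using the Jacobi identity of $\h$) to a vanishing constraint on $U$ on iterated commutators, and the sixth becomes a relation between $\Om$ and $U$. Then I would introduce the isomorphism $\phi:\h\oplus A\oplus\h^{*}\too\h\oplus A\oplus\h^{*}$, $\phi(X+a+\al)=X+(a-U(X))+\al$, and transport the structure by $u\bullet v:=\phi^{-1}(\phi(u)\,\br\,\phi(v))$ and $\langle u,v\rangle_0:=\langle\phi(u),\phi(v)\rangle_n$. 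Expanding $\langle\phi(X+a+\al),\phi(Y+b+\be)\rangle_n$ by \eqref{natural} gives at once the metric displayed in the statement, with $U^{*}:A\too\h^{*}$ defined by $\prec U^{*}(a),X\succ=\langle a,U(X)\rangle_A$.

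The bulk of the work is then computing the transported products. A direct expansion, using $\theta(X,Y)=[U(X),U(Y)]_A$, shows that the $A$-component of $X\bullet Y$ cancels, so $X\bullet Y$ is $\h^{*}$-valued; the $\Ri$-invariance of $\prs_0$ then makes it skew in the correct variables, hence $X\bullet Y=\Om_0(Y)^{\flat}(X)$ for a well-defined $\Om_0:\h\too\wedge^{2}\h^{*}$. Using the invariance of $\prs_A$ and $F(X)=\ad_{U(X)}$, the product $a\bullet b$ becomes $[a,b]_A+U^{*}([a,b]_A)$, while $X\bullet\al$ is unchanged, equal to $(\ad_X^{\h})^{*}\al$. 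The mixed products $X\bullet a$ and $a\bullet X$ come out $\h^{*}$-valued; their components are governed by a $2$-form $\om_0\in\wedge^{2}\h^{*}$ via \eqref{relationr} for the transported data, and the identity $\om(X,Y)=U([X,Y]_\h)-[U(X),U(Y)]_A$ together with the first equation of \eqref{eqmain4} for the transported data (for which $F=\theta=0$) pushes the relevant component into $Z(A)=\{0\}$, so these products vanish. Finally, feeding $F=\theta=0$ into the sixth equation of \eqref{eqmain4} for the transported data yields the stated constraint $\Om_0(X)^{\flat}([\h,\h]_\h)=0$. The converse needs no separate argument: the new algebra is isomorphic to the original one via $\phi$, hence it is again a $\Ri$-quadratic left Leibniz algebra, and composing with the identification of Theorem \ref{main4} finishes the proof.

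The step I expect to be the main obstacle is the transported computation together with the substitution into \eqref{eqmain4}: one must track exactly which $\h^{*}$-components of the five products survive after conjugation by $\phi$, verify that the mixed products and the $2$-form $\om_0$ are pinned down by $U$ and ultimately vanish because $Z(A)=\{0\}$, and check the cocycle-type constraint on $\Om_0$. These are long, mechanical verifications of the same nature as those deferred to the appendix for Theorems \ref{main3} and \ref{main4}, but they become routine once the three identities $F(X)=\ad_{U(X)}$, $\theta(X,Y)=[U(X),U(Y)]_A$ and $\om(X,Y)=U([X,Y]_\h)-[U(X),U(Y)]_A$ are in hand.
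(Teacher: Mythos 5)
Your proposal is correct and follows essentially the same route as the paper: identify $F(X)=\ad_{U(X)}$ via $H^2(A)=0$ and $Z(A)=0$, deduce $\theta(X,Y)=[U(X),U(Y)]_A$ from the second equation of \eqref{eqmain4}, conjugate by $\phi(X+a+\al)=X+a-U(X)+\al$, and then use the invariance relations together with the first and last equations of \eqref{eqmain4} for the transported data to kill the mixed products and obtain the constraint on $\Om_0$. The only cosmetic difference is that you solve for $\om$ in terms of $U$ before transporting, whereas the paper shows directly that the transported $\om_0$ lies in $Z(A)=\{0\}$; these are equivalent.
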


\begin{proof} Using the same arguments as those used in th the proof  of Theorem \ref{semil}, we can show that $(\h\oplus A\oplus\h^*,\br,\prs_n)$ is isomorphic to $(\h\oplus A\oplus\h^*,\br,\prs)$ where, for any $X,Y\in\h$, $a,b\in A$, $\al\in\h^*$,
	\[X\bullet Y_0=\theta_2(X,Y),\; a\bullet b=[a,b]_a+ \mu(a,b),\; X\bullet a=\nu(X,a),\; X\bullet \al=(\ad_X^\h)^*\al\esp a\bullet X=\rho(a,X). \]The metric $\prs$ is given by
	\[ \langle X+a+\al,X+a+\al\rangle=2\prec \al,X\succ+\langle a,a\rangle+\langle U(X),U(X)\rangle_A-2\langle a,U(X)\rangle_A. \]
	From the relations
	\[ \langle a\bullet b,X\rangle=-\langle a,X\bullet b\rangle\esp\langle a\bullet X, Y\rangle=-\langle a, Y\bullet X\rangle \]we get
	\[ \rho=0\esp \prec\mu(a,b),X\succ=\langle[a,b]_A,U(X)\rangle_A. \]
	Now from the  first equation  in \eqref{eqmain4}, we get $\nu=0$ and the last equation gives the condition on $\Om_0$.
\end{proof}

\begin{co}\label{dimr} Let $(\G,\br,\prs)$ be a $\Ri$-quadratic left Leibniz which is not a Lie algebra. Then $\dim\Lei(\G)\geq2$.
	
	In particular,  a $\Ri$-quadratic left Leibniz algebra with the metric Lorentzian is a Lie algebra.

\end{co}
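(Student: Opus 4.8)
The plan is to deduce Corollary \ref{dimr} as a direct consequence of Theorem \ref{main4}, proceeding exactly as in the proof of Corollary \ref{diml} but adapting the bookkeeping to the $\Ri$-quadratic setting. First I would treat the case where $\Lei(\G)$ is nondegenerate: by Proposition \ref{leib}, item 2, if $\prs$ is $\Ri$-invariant then $\Lei(\G)\subset\G\bullet\G\subset\Lei(\G)^\perp$, so $\Lei(\G)$ nondegenerate forces $\Lei(\G)=\{0\}$, and hence $\G$ is already a Lie algebra. Thus, assuming $\G$ is not a Lie algebra, we may assume $\Lei(\G)$ is degenerate, and argue by contradiction that $\dim\Lei(\G)=1$ is impossible.

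Next I would invoke Theorem \ref{main4}: since $\Lei(\G)$ is degenerate, $(\G,\bullet,\prs)$ is isomorphic to $(\h\oplus A\oplus\h^*,\bullet,\prs_n)$ with $\h=\G/I^\perp$, $A=I^\perp/I$, $I=\Lei(\G)$, and the products given by \eqref{ddr}. Here $\h^*\cong\Lei(\G)$, so $\dim\Lei(\G)=1$ means $\dim\h=1$. With $\dim\h=1$, the skew-symmetry of the bilinear maps $\theta$ and $\om$ forces $\theta=0$ and $\om=0$, and consequently $\Om$ contributes nothing (indeed $\Om(Y)^\flat(X)$ vanishes once we examine it together with the skew-symmetry constraints, or one checks directly that the relevant terms drop out). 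Then the defining relations \eqref{relationr} give $\nu=0$ and $\rho=0$, and $\mu=0$ because $\mu$ is built from $F$ paired against elements of $A$ but evaluated on the one-dimensional $\h$ — more carefully, from $X\bullet a+a\bullet X=F(X)a-F(X)a+\nu(X,a)+\rho(a,X)=\nu(X,a)+\rho(a,X)\in\h^*$ we already know $G=-F$ by the construction, and with $\nu=\rho=0$ this sum vanishes. Collecting everything, the only surviving products on $\h\oplus A\oplus\h^*$ are $a\bullet b=[a,b]_A$ (a skew-symmetric Lie bracket on $A$), $X\bullet\al=(\ad_X^\h)^*\al$, and $\al\bullet X=0$; since $X\bullet\al+\al\bullet X=(\ad_X^\h)^*\al$ and $\ad^\h$ is trivial on the abelian one-dimensional $\h$, the bracket $\bullet$ is anti-commutative, so $\Lei(\G)=\{0\}$, contradicting $\dim\Lei(\G)=1$.

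For the final sentence, if $\prs$ is Lorentzian then $\Lei(\G)$ is either nondegenerate — in which case $\G$ is a Lie algebra as above — or degenerate, in which case $\Lei(\G)\cap\Lei(\G)^\perp$ is a nonzero totally isotropic subspace of a Lorentzian space, forcing $\dim(\Lei(\G)\cap\Lei(\G)^\perp)=1$; since $\Lei(\G)\subset\Lei(\G)^\perp$ by Proposition \ref{leib}, this gives $\dim\Lei(\G)=1$, which by the first part is impossible unless $\G$ is a Lie algebra.

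The routine part is the list-chasing through \eqref{ddr}, \eqref{relationr}, and \eqref{eqmain4} to confirm that every structure map degenerates when $\dim\h=1$; the only point requiring a little care is making sure the $\Om$-term $\Om(Y)^\flat(X)$ in $X\bullet Y$ genuinely drops out (it does, because on a one-dimensional $\h$ the relevant antisymmetry together with $\theta=\om=0$ and the last equation of \eqref{eqmain4} kills it), and that one correctly uses $\h^*\cong\Lei(\G)$ rather than $\Lei(\G)^\perp$. I expect no real obstacle here: the corollary is a clean specialization of Theorem \ref{main4}, mirroring Corollary \ref{diml}.
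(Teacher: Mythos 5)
Your proof is correct and follows essentially the same route as the paper: invoke Theorem \ref{main4}, observe that $\dim\Lei(\G)=1$ forces $\dim\h=1$, hence $\theta=\om=0$ by skew-symmetry and $\Om=0$ (since $\wedge^2\h^*=\{0\}$), so the product \eqref{ddr} becomes anticommutative, i.e.\ a Lie bracket, contradicting $\Lei(\G)\neq\{0\}$; the Lorentzian case then follows exactly as in the paper because $\Lei(\G)\subset\Lei(\G)^\perp$ is totally isotropic and so has dimension at most $1$. One small correction to your bookkeeping: $\mu$ need not vanish and the products $X\bullet a=F(X)a$, $a\bullet X=-F(X)a$ do survive; the argument still works because $\mu$ is skew-symmetric (as $F(X)\in\mathrm{so}(A)$) and those cross terms cancel in $X\bullet a+a\bullet X$, so the bracket is nonetheless anticommutative.
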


\begin{proof} Suppose that $\dim\Lei(\G)=1$. Then $\dim\h=1$ and by using Theorem \ref{main4}, we get that $\G$ is isomorphic to $\h\oplus A\oplus\h^*$ with $\theta=\om=\Om=0$ and the Leibniz product \eqref{ddr} becomes a Lie bracket which is a contradiction. 
	
If $\G$ is a $\Ri$-quadratic left Leibniz algebra with the metric Lorentzian then, since the metric is Lorentzian and $\Lei(\G)\subset\Lei(\G)^\perp$  then $\dim\Lei(\G)\leq 1$ and hence $\dim\Lei(\G)=0$.	
	\end{proof}

By using Theorem \ref{main3} and Theorem \ref{main4}, we give now a complete description of $\Li$-quadratic symmetric Leibniz algebras.

\begin{theo}\label{main} Let $(\G,\bullet_\G,\prs_\G)$ be a $\Li$-quadratic symmetric Leibniz algebra with  $(\h,\br_\h)$ and $(A,\bullet_A,\prs_A)$  its core. Then $\Lei(\G)\subset\Lei(\G)^\perp$, $(A,\bullet_A,\prs_A)$ is a quadratic Lie algebra, $(\h,\br_\h)$ is abelian and
	$(\g, \bullet_\G, \prs_\G)$    is isomorphic to $(\h\oplus A \oplus \h^{*}, \bullet, \prs_n)$  where for any $X,Y\in \h$, $a,b\in A$,  $\al\in\h^*$,  the non vanishing Leibniz products are given by 
	 	\begin{equation} \begin{cases}X\bullet Y=\theta(X,Y)+\Om(X)^\flat(Y),\;
			a\bullet b=[a,b]_A+\mu(a,b),\\
			X\bullet a=F(X)a+\nu(X,a),\;\;
			a\bullet X=-F(X)a+\rho(a,X),
		\end{cases} 
	\end{equation}
	and $F:\h\too \mathrm{so}(A)\cap\mathrm{Der}(A)$, $\Om:\h\too\wedge^2\h^*$ are linear,  
		  $\theta,\om:\h\times\h\too A$     are bilinear  skew-symmetric  and $\mu,\rho,\nu$ are defined by the following relations:
		\[ \prec\rho(a,X),Y\succ=-\langle\om(X,Y),a\rangle_A,\;
			\prec\mu(a,b),X\succ=\langle F(X)a,b\rangle_A\esp \prec\nu(X,a),Y\succ=
			-\langle \theta(X,Y),a\rangle_A.
	 \] 
	
	Moreover,  $(F,\om, \theta, \Om)$ satisfy the following equations:
		\begin{equation}\label{eqmain5}
		\begin{cases}
			\ad^{A}_{\theta(X,Y)}=-\ad_{\om(X,Y)}^A=[F(X),F(Y)],\\
			F(X)\theta(Y,Z)+F(Y)\theta(Z,X)+F(Z)\theta(X,Y)=0,\\
			F(X)\left[\theta(Y,Z)+\om(Y,Z)\right]=0,\\
			\langle \theta(Y,T),\theta(Z,X)\rangle_A+
			\langle \theta(X,T),\theta(Y,Z)\rangle_A+\langle \theta(Z,T),\theta(X,Y)\rangle_A=0,\\
			\langle \theta(X,T)+\om(X,T),\theta(Y,Z)\rangle_A=0,
	\end{cases}\end{equation}for any $X,Y,Z,T\in\h$.

\end{theo}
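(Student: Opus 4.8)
The plan is to combine the structural results already at our disposal: Theorem~\ref{main3} describes an arbitrary $\Li$-quadratic left Leibniz algebra with degenerate Leibniz ideal, and Theorem~\ref{main4} does the same for $\Ri$-quadratic ones. Since a symmetric Leibniz algebra is simultaneously left and right Leibniz, the product $\bullet_\G$ is $\Li$-quadratic as a left Leibniz algebra and, viewing the symmetric structure through the lens of the right Leibniz identity, must also satisfy the structural constraints coming from the $\Ri$-side. First I would record the easy structural facts: by Proposition~\ref{leib}, for a $\Li$-quadratic structure with $\Lei(\G)$ nondegenerate we would get $\G=\Lei(\G)\oplus\Lei(\G)^\perp$, but then (as in the proof of Corollary~\ref{diml}) the skew-symmetry forced by the right Leibniz identity on the action of $\Lei(\G)^\perp$ on $\Lei(\G)$ trivializes it, contradicting $\G$ not being Lie; hence $\Lei(\G)$ is degenerate, $I=\Lei(\G)\cap\Lei(\G)^\perp=\Lei(\G)$, i.e. $\Lei(\G)\subset\Lei(\G)^\perp$. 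So Theorem~\ref{main3} applies with $I=\Lei(\G)$, giving the decomposition $\G\cong\h\oplus A\oplus\h^*$ with the product \eqref{dd}.

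Next I would impose the extra symmetry. Being symmetric forces $Z^\ell(\G)=Z^r(\G)$-type compatibilities; concretely, the right Leibniz identity applied to the core quotient $A=I^\perp/I$ shows $A$ is not merely $\Li$-quadratic but symmetric Leibniz, and since $\Lei(A)=\pi_A(\Lei(\G))$ is both nondegenerate (Proposition~\ref{core}) and totally isotropic (it sits inside $\pi_A(\Lei(\G))\subset\pi_A(\Lei(\G)^\perp)^\perp$ by the inclusion $\Lei(\G)\subset\Lei(\G)^\perp$), we get $\Lei(A)=\{0\}$, i.e. $A$ is a Lie algebra; combined with the quadratic metric $\prs_A$, $A$ is a quadratic Lie algebra. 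For the Lie bracket $\br_A$ one then has $F(X)=-G(X)$ (the left and right multiplications by $X\in\h$ on $A$ must be negatives of each other, exactly as the symmetry demands), so in \eqref{dd} the terms $X\bullet a=F(X)a+\nu(X,a)$ and $a\bullet X=-F(X)a+\rho(a,X)$ appear, matching the claimed form. Similarly $\theta$ becomes skew-symmetric: $X\bullet Y+Y\bullet X\in\Lei(\G)=\h^*$ kills its $A$-component. That $\h$ is abelian follows because the right Leibniz identity forces the bracket $\br_\h$ (which is the quotient Lie bracket on $\Lie(\G)/\pi_A(\Lei(\G)^\perp)$, Proposition~\ref{spliting}) to be both skew and — via the Leibniz-ideal mechanism on the right side — to have all its values land in a subspace on which it acts trivially; spelled out, $X\bullet Y$ has no $\h$-component, so $[X,Y]_\h=0$.

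Then I would translate the system \eqref{eqmain3} of Theorem~\ref{main3} under the substitutions $G=-F$, $[X,Y]_\h=0$, $\bullet_A=\br_A$. The first pair of equations $[\Li_a^A,F(X)]=\Li_{G(X)a}^A$, $[\Li_a^A,G(X)]=\Ri^A_{G(X)a}$ become, using $\Li_a^A=\ad_a^A=-\Ri_a^A$ on the Lie algebra $A$, automatically consistent; the equation $\Li_{\om(X,Y)}^A=G^*(X)G(Y)-G^*(Y)G(X)$ turns into $\ad_{\om(X,Y)}^A=[F(X),F(Y)]$ after using $G^*=-G=F$ (as $F\in\mathrm{so}(A)$, $F^*=-F$, so $G^*(X)=F(X)$) and $G(X)G(Y)-G(Y)G(X)=[F(X),F(Y)]$; the $\theta$-equations collapse to $\ad^A_{\theta(X,Y)}=[F(X),F(Y)]$ (so that $\theta(X,Y)+\om(X,Y)$ is central in $A$, whence line three of \eqref{eqmain5}) and the Jacobi-type identity $F(X)\theta(Y,Z)+F(Y)\theta(Z,X)+F(Z)\theta(X,Y)=0$; and the cocycle equation for $\Om$, together with $[X,Y]_\h=0$ so $\De(\Om)(X,Y)=\ad_X^*\Om(Y)-\ad_Y^*\Om(X)$ but with $\h$ abelian the coadjoint action is trivial, $\De(\Om)(X,Y)=0$, reduces the right-hand side to the two scalar identities displayed in \eqref{eqmain5}. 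I would also do the converse: given $(A,\br_A,\prs_A)$ quadratic Lie, $\h$ abelian, and $(F,\om,\theta,\Om)$ satisfying \eqref{eqmain5}, one checks directly that \eqref{dd} with $G=-F$ defines a product which is both left Leibniz (all of \eqref{eqmain3} hold) and right Leibniz (the symmetry $G=-F$, $\theta,\om$ skew makes the right identity equivalent to the same set), hence symmetric, and the metric $\prs_n$ is $\Li$-quadratic by construction.

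\textbf{Main obstacle.} The delicate part is the bookkeeping in the last paragraph: verifying that, after the substitution $G=-F$ and $\h$ abelian, the eight-line system \eqref{eqmain3} genuinely compresses to the five-line system \eqref{eqmain5} with no surviving independent constraint, and — for the converse — that the right Leibniz identity does not impose anything beyond \eqref{eqmain5}. This is exactly the kind of long, tedious computation that Theorems~\ref{main3} and~\ref{main4} already deferred to the appendix, so I expect to either invoke those appendix computations directly or reproduce the relevant lines; the conceptual content (degeneracy of $\Lei(\G)$, $A$ quadratic Lie, $\h$ abelian, $G=-F$, $\theta,\om$ skew) is straightforward, but getting the equation list exactly right is where care is needed.
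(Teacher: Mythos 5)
Your overall architecture matches the paper's: establish $\Lei(\G)\subset\Lei(\G)^\perp$ from the right Leibniz property, apply Theorem \ref{main3}, identify $G=-F$, $\theta$ skew-symmetric, $A$ a quadratic Lie algebra and $\h$ abelian, then specialize the equations. The structural part is essentially right. But there are two concrete gaps in the step where you produce \eqref{eqmain5}. First, you obtain line three, $F(X)\left[\theta(Y,Z)+\om(Y,Z)\right]=0$, from the centrality of $\theta(Y,Z)+\om(Y,Z)$ in $A$. That is a non sequitur: $F(X)$ is a derivation of $A$, and a derivation need not annihilate central elements. The identity actually comes from combining equations (5) and (6) of \eqref{eqmain3} after the substitutions $G=-F$, $G^*=F$, $\br_\h=0$: equation (6) becomes $F(Z)\theta(X,Y)+F(Y)\theta(Z,X)-F(X)\om(Y,Z)=0$, and subtracting it from the cyclic identity in line two of \eqref{eqmain5} gives the claimed relation.

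Second, and more seriously, you assert that the cocycle equation of \eqref{eqmain3} with $\De(\Om)=0$ "reduces to the two scalar identities displayed in \eqref{eqmain5}". It yields only one, namely
\[
\langle\theta(X,T),\theta(Y,Z)\rangle_A+\langle\theta(Y,T),\theta(Z,X)\rangle_A-\langle\om(Z,T),\theta(X,Y)\rangle_A=0,
\]
which is a combination of lines (4) and (5) of \eqref{eqmain5}, not the two lines separately; no symmetrization of this single identity over $X,Y,Z,T$ splits it (the natural symmetrizations vanish identically because $\theta$ and $\om$ are skew). To separate the two lines you genuinely need the second system: apply Theorem \ref{main4} to the opposite product $X\circ Y=Y\bullet X$, which is a left Leibniz $\Ri$-quadratic product, identify its data as $(F^\circ,\om^\circ,\theta^\circ,\Om^\circ)=(-F,-\om,-\theta,\Om)$ with $\br_\circ=0$, and invoke the last equation of \eqref{eqmain4}, which supplies the independent identity $\langle\om(Y,T),\theta(X,Z)\rangle_A+\langle\om(X,T),\theta(Z,Y)\rangle_A+\langle\theta(Z,T),\theta(X,Y)\rangle_A=0$. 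Only the union of the two specialized systems is equivalent to \eqref{eqmain5}; this is exactly what the paper does, and it is also what your converse needs, since the right Leibniz identity is encoded by \eqref{eqmain4} and is not automatic from $G=-F$ and skew-symmetry of $\theta,\om$. Your opening paragraph announces that the $\Ri$-side constraints must be imposed, but the operative computation only translates \eqref{eqmain3}, and without \eqref{eqmain4} the list \eqref{eqmain5} is not reached.
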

\begin{proof}
	 According to Theorem \ref{main3},  $(\G,\bullet,\prs)$ is isomorphic to $(\h\oplus A\oplus\h^*)$ with the non vanishing Leibniz products are given, for any  $X,Y\in \h$, $a,b\in A$,  $\al\in\h^*$, by
	\begin{equation*} \begin{cases}X\bullet Y=[X, Y]_\h+\theta(X,Y)+\Om(X)^\flat(Y),\;
			a\bullet b=a\bullet_A b+\mu(a,b),\\
			X\bullet a=F(X)a+\nu(X,a),\;
			X\bullet \al=(\mathrm{ad}^{\h})_{X}^*\al,\;
			a\bullet X=G(X)a+\rho(a,X),
		\end{cases} 
	\end{equation*}
	 and $(F,G,\om, \theta, \Om)$ satisfy \eqref{relations} and \eqref{eqmain3}.
	
	$\G$ is also a right Leibniz algebra and hence the product $\circ$ given by $X\circ Y=Y\bullet X$ is a left Leibniz product and $(\G,\circ,\prs)$ is $\Ri$-quadratic. So $\Lei(\G)\subset \Lei(\G)^\perp$ and hence the core of $(\G,\circ,\prs)$ is $(A,-\bullet_A,\prs_A)$ and $(\h,\br_\circ)$. Thus implies that $\bullet_A$ is Lie bracket, let denote it by $\br_A$. Then, according to Theorem \ref{main4}, $(\G,\circ,\prs)$ is isomorphic to 
	$(\h\oplus A\oplus \h^*)$ with the non vanishing  products
	$$\begin{cases}X\circ Y=\theta^\circ(X,Y)+\Om^\circ(Y)^\flat(X),\;
		a\circ b=-[a,b]_A+\mu^\circ(a,b),\\
		X\circ a=F^\circ(X)a+\nu^\circ(X,a),\;
		X\circ \al=(\mathrm{ad}^{\circ})_{X}^*\al,\;
		a\circ X=-F^\circ(X)a+\rho^\circ(a,X),
	\end{cases} $$ and $(F^\circ,\om^\circ,\theta^\circ,\Om^\circ)$ satisfy \eqref{eqmain4} and \eqref{relationr}. Moreover, $\om^\circ$ and $\theta^\circ$ are skew-symmetric. 
	
	We deduce that $\br_\h=\br_\circ=0$ and
	\[ \theta^\circ=-\theta,\;\Om^\circ=\Om,F^\circ=-F=G\esp \om^0=-\om.\]
	
	If we replace in \eqref{main3} and \eqref{main4} we get the two systems of equations
	
	\begin{equation*} \begin{cases}
			\ad_{\om(X,Y)}^A=-[F(X),F(Y)],\\
			\ad^{A}_{\theta(X,Y)}=[F(X),F(Y)],\\
			F(X)\theta(Y,Z)+F(Y)\theta(Z,X)+F(Z)\theta(X,Y)=0,\\
			F(Z)\theta(X,Y)+F(Y)\theta(Z,X)-F(X)\om(Y,Z)=0,\\
			\langle \theta(X,T),\theta(Y,Z)\rangle_A-\langle \theta(Y,\star),\theta(X,Z)\rangle_A
			-\langle \om(Z,T),\theta(X,Y)\rangle_A=0.
	\end{cases}\end{equation*}
	
		\begin{equation*}
		\begin{cases}\ad_{\om(X,Y)}^A=-[F(X),F(Y)],\\
			\ad^{A}_{\theta(X,Y)}=[F(X),F(Y)],\\
			F(X)\theta(Y,Z)+F(Y)\theta(Z,X)+F(Z)\theta(X,Y)=0,\\
			F(X)\om(Y,Z)+F(Y)\om(Z,X)-F(Z)\theta(X,Y)
			=0,\\
			F(X)\theta(Y,Z)-F(Y)\om(Z,X)+F(Z)\theta(X,Y)
			=0,\\
			\langle \om(Y,T),\theta(X,Z)\rangle_A-
			\langle \om(X,T),\theta(Y,Z)\rangle_A+\langle \theta_1(Z,T),\theta(X,Y)\rangle_A=0.
	\end{cases}\end{equation*}These two systems are equivalent to

\begin{equation*}
	\begin{cases}\ad_{\om(X,Y)}^A=-[F(X),F(Y)],\\
		\ad^{A}_{\theta(X,Y)}=[F(X),F(Y)],\\
		F(X)\theta(Y,Z)+F(Y)\theta(Z,X)+F(Z)\theta(X,Y)=0,\\
		F(X)\om(Y,Z)+F(Y)\om(Z,X)-F(Z)\theta(X,Y)
		=0,\\
		F(X)\theta(Y,Z)-F(Y)\om(Z,X)+F(Z)\theta(X,Y)
		=0,\\
		\langle \om(Y,T),\theta(X,Z)\rangle_A-
		\langle \om(X,T),\theta(Y,Z)\rangle_A+\langle \theta(Z,T),\theta(X,Y)\rangle_A=0,\\
		\langle \om(Y,T),\theta(X,Z)\rangle_A-
		\langle \om(X,T),\theta(Y,Z)\rangle_A+\langle \theta(Z,T),\theta(X,Y)\rangle_A=0.
\end{cases}\end{equation*}
The equations (3), (4) and (5) in the above system are equivalent to
\[ 
	F(X)\theta(Y,Z)+F(Y)\theta(Z,X)+F(Z)\theta(X,Y)=0
	\esp
	F(X)\left[\theta(Y,Z)+\om(Y,Z)\right]=0,
 \]
 which completes the proof.
	\end{proof}

\begin{co} Any Lorentzian $\Li$-quadratic symmetric  Leibniz algebra is a quadratic Lie algebra.
	
\end{co}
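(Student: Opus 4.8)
The plan is to combine the structural information of Theorem~\ref{main} with the dimension constraint imposed by a Lorentzian signature. First I would recall from Theorem~\ref{main} that for any $\Li$-quadratic symmetric Leibniz algebra $(\G,\bullet,\prs)$ one has $\Lei(\G)\subset\Lei(\G)^\perp$; in other words the Leibniz ideal is totally isotropic. In a pseudo-Euclidean space of Lorentzian signature $(n-1,1)$ every totally isotropic subspace has dimension at most $1$, so $\dim\Lei(\G)\le 1$.

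Next I would invoke Corollary~\ref{diml}: a $\Li$-quadratic left Leibniz algebra that is not a Lie algebra satisfies $\dim\Lei(\G)\ge 2$. Since a symmetric Leibniz algebra is in particular a left Leibniz algebra, this applies here; together with the bound $\dim\Lei(\G)\le 1$ it forces $\Lei(\G)=\{0\}$. But $\Lei(\G)=\{0\}$ is exactly the condition for $\G$ to be a Lie algebra. Finally, on a Lie algebra the product is anti-commutative, so $\Li$-invariance, $\Ri$-invariance and associativity of the metric coincide, and hence $(\G,\bullet,\prs)$ is a quadratic Lie algebra, as claimed.

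I do not expect any real obstacle here: the statement is a short consequence of two results already proved. The only point to check is that both ingredients --- the inclusion $\Lei(\G)\subset\Lei(\G)^\perp$ (Theorem~\ref{main}) and the lower bound $\dim\Lei(\G)\ge2$ for non-Lie algebras (Corollary~\ref{diml}) --- genuinely apply in the symmetric setting, which they do because every symmetric Leibniz algebra is left Leibniz. As an alternative one could argue directly from the explicit model in Theorem~\ref{main}: there $\Lei(\G)$ is identified with $\h^*$, so $\dim\Lei(\G)=1$ would give $\dim\h=1$, whence $\Om=0$ and $\om=0$; feeding this into the structure equations~\eqref{eqmain5} and the defining relations for $\mu,\nu,\rho$ collapses $\bullet$ to a skew-symmetric bracket, again making $\G$ a Lie algebra. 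Invoking Corollary~\ref{diml} is the cleaner route.
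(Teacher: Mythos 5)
Your argument is correct and matches the paper's intended (unprinted) proof: combine the total isotropy of $\Lei(\G)$ from Theorem~\ref{main} with the Lorentzian bound $\dim\Lei(\G)\le 1$ and Corollary~\ref{diml} to force $\Lei(\G)=\{0\}$, exactly as the paper does explicitly for the analogous $\Ri$-quadratic statement in Corollary~\ref{dimr}. No gaps; the concluding identification of an $\Li$-invariant metric on a Lie algebra with a quadratic structure is immediate from anti-commutativity.
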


\begin{remark} In Theorem \ref{main}, if we take $\theta=\om=0$, we can build for free many examples of $\Li$-quadratic symmetric Leibniz algebras. For instance let $\h$ be a vector space with a basis $(e_1,\ldots,e_r)$, $(A,\prs_A)$ a 2-dimensional abelian Euclidean Lie algebra with a basis $(f_1,f_2)$. For $i=1,\ldots,r$, take $F(e_i)=\left(\begin{matrix}0&\la_i\\-\la_i&0 \end{matrix}
			\right)$. We take $\om=\theta=0$ and $\Om(e_i)=\left(a_{kl}^i \right)_{kl}$ a skew-symmetric matrix. So on $\h\oplus A\oplus\h^*$ we get a structure of symmetric Leibniz algebra $\bullet$ and a $\Li$-quadratic metric $\prs_n$ by
	\[ e_i\bullet e_j=\sum_{k=1}^ra_{jk}^ie_k^*,\; e_i\bullet f_1=-\la_i f_2,\;e_i\bullet f_2=\la_if_1,f_1\bullet f_2=-f_2\bullet f_1=-\sum_{i=1}^r\la_ie_i^*. \]

\end{remark}

\section{Appendix}\label{apend}

In this appendix we give the conditions for which the pseudo-Euclidean left Leibniz algebras $(\h\oplus A\oplus\h^*,\bullet,\prs_n)$ appearing, respectively, in Theorem \ref{main3} and Theorem \ref{main4} is a $\Li$-quadratic (resp. $\Ri$-quadratic) left Leibniz algebra in order to complete the proof of these theorem. 

The Leibniz products \eqref{dd} and \eqref{ddr} can be written
\begin{equation*} \begin{cases}X\bullet Y=\e[X, Y]_\h+\theta(X,Y)+\theta_2(X,Y),\;
		a\bullet b=a\bullet_A b+\mu(a,b),\\
		X\bullet a=F(X)a+\nu(X,a),\;
		X\bullet \al=(\mathrm{ad}^{\h})_{X}^*\al,\;
		a\bullet X=G(X)a+\rho(a,X),\quad \e\in\{0,1\},
	\end{cases} 
\end{equation*}where \begin{enumerate}
\item $\e=1$, $F(X)$ skew-symmetric,  $\theta_2(X,Y)=\Om(X)^\flat(Y)$, $\om$ skew-symmetric, and
\[ \prec\rho(a,X),Y\succ=-\langle\om(X,Y),a\rangle_A,\;
\prec\mu(a,b),X\succ=-\langle G(X)a,b\rangle_A,\; \prec\nu(X,a),Y\succ=
-\langle \theta(X,Y),a\rangle_A \]when the metric is $\Li$-quadratic,
\item $\e=0$, $G=-F$, $F(X)$ skew-symmetric, $\theta$ and $\om$ are skew-symmetric, $\theta_2(X,Y)=\Om(Y)^\flat(X)$ and
\[ \prec\nu(X,a),Y\succ=\langle\om(X,Y),a\rangle_A,\;
\prec\mu(a,b),X\succ=\langle F(X)a,b\rangle_A,\; \prec\rho(a,X),Y\succ=
\langle \theta(X,Y),a\rangle_A \]when the metric is $\Ri$-quadratic,

\end{enumerate}

 This product defines a left Leibniz structure if and only if, for any $u,v,w\in\G$,
\[ Q(u,v,w):=u\bullet(v\bullet w)-(u\bullet v)\bullet w-v\bullet (u\bullet w)=0. \]
Since, for any $\al\in\h^*$, $\Li_\al=0$ and $\h^*$ is a left ideal, we have $Q(\al,.,.)=Q(.,\al,.)=0$. So the vanishing of $Q$ is equivalent to
This is equivalent to
$$\begin{cases}
	Q(X,Y,Z)=Q(X,Y, \al)=  Q(X,Y,a)= Q(X,a,Y)= Q(X,a,\al)=  Q(X,a,b)=0,\\
	Q(a,b,c)= Q(a,b, X)= Q(a,X,b)=Q(a,b, \al)=
	Q(a,X,Y)= Q(a,X, \al)=0,
\end{cases}$$ for any $X,Y,Z\in \h$, $a,b,c\in A$, and $\al,\be \in\h^{*}$. 

On the other hand, when the metric is $\Li$-quadratic, we have, for any $r,u,v,w\in\G$,
\[ \langle Q(u,v,w),r\rangle_n=-\langle Q(u,v,r),w\rangle_n. \]
We will use this relation to ovoid redundant equations.

\begin{enumerate}  
	\item We have 
	\begin{align*}
		X\bullet(Y\bullet Z)&=X\bullet(\e[Y, Z]_\h+\theta(Y,Z)+\theta_2(Y,Z))\\
		&=\e[X,[Y,Z]_\h]_\h+\e\theta(X,[Y,Z]_\h)
		+\e\theta_2(X,[Y,Z]_\h)+F(X)\theta(Y,Z)+\nu(X,\theta(Y,Z))+(\ad_X^\h)^*\theta_2(Y,Z)\\
		(X\bullet Y)\bullet Z&=(\e[X, Y]_\h+\theta(X,Y)+\theta_2(X,Y))\bullet Z\\
		&=\e[[X,Y]_\h,Z]_\h+\e\theta([X,Y]_\h,Z)+\e\theta_2([X,Y]_\h,Z)+G(Z)\theta(X,Y)+\rho(\theta(X,Y),Z)\\
		Y\bullet(X\bullet Z)
		&=\e[Y,[X,Z]_\h]_\h+\e\theta(Y,[X,Z]_\h)
		+\e\theta_2(Y,[X,Z]_\h)+F(Y)\theta(X,Z)
		+\nu(Y,\theta(X,Z))+(\ad_Y^\h)^*\theta_2(X,Z).
	\end{align*}
$\bullet$ If the metric is $\Li$-quadratic, $Q(X,Y,Z)=0$ is equivalent to 	 $(\h,\br_\h)$ is a Lie algebra and 
\[ \begin{cases}
	\theta(X,[Y,Z]_\h)
	+F(X)\theta(Y,Z)-\theta([X,Y]_\h,Z)-G(Z)\theta(X,Y)-\theta(Y,[X,Z]_\h)
	-F(Y)\theta(X,Z)=0,\\
	K:=\theta_2(X,[Y,Z]_\h)+\nu(X,\theta(Y,Z))+(\ad_X^\h)^*\theta_2(Y,Z)-\theta_2([X,Y]_\h,Z)-\rho(\theta(X,Y),Z)-\theta_2(Y,[X,Z]_\h)\\
	-\nu(Y,\theta(X,Z))-(\ad_Y^\h)^*\theta_2(X,Z)=0.
\end{cases} \]
We get the equation (5) in \eqref{eqmain3}. We have
\begin{align*}
	\prec K,T\succ&=\Om(X)([Y,Z]_\h,T)-\langle\theta(X,T),\theta(Y,Z)\rangle_A-\Om(Y)(Z,[X,T]_\h)-\Om([X,Y]_\h)(Z,T)\\&+\langle\om(Z,T),\theta(X,Y)\rangle_A-\Om(Y)([X,Z]_\h,T)+\langle\theta(Y,T),\theta(X,Z)\rangle_A
	+\Om(X)(Z,[Y,T]_\h)\\
	&=\De(\Om)(X,Y)(Z,T)-\langle\theta(X,T),\theta(Y,Z)\rangle_A+\langle\om(Z,T),\theta(X,Y)\rangle_A+\langle\theta(Y,T),\theta(X,Z)\rangle_A.
\end{align*}
So $K=0$ if and only if the last equation in \eqref{eqmain3} holds.
	
	$\bullet$ If the metric is $\Ri$-quadratic, $Q(X,Y,Z)=0$ is equivalent to 	 
	\[ \begin{cases}
		F(X)\theta(Y,Z)+F(Y)\theta(Z,X)+F(Z)\theta(X,Y)=0,\\
		K:=\nu(X,\theta(Y,Z))+(\ad_X^\h)^*\theta_2(Y,Z)-
		\rho(\theta(X,Y),Z)
		-\nu(Y,\theta(X,Z))-(\ad_Y^\h)^*\theta_2(X,Z)=0.
		\end{cases}
		 \]
We get the equation (3) in \eqref{eqmain4}. 	We have
\begin{align*}
	\prec K,T\succ&=\langle\om(X,T),\theta(Y,Z)\rangle_A-\Om(Z)(Y,[X,T]_\h)-\langle\theta(Z,T),\theta(X,Y)\rangle_A-\langle\om(Y,T),\theta(X,Z)\rangle_A+\Om(Z)(X,[Y,T]_\h)\\
	&=(\ad_T^\h)^*\Om(Z)(X,Y)+\langle\om(X,T),\theta(Y,Z)\rangle_A-\langle\theta(Z,T),\theta(X,Y)\rangle_A-\langle\om(Y,T),\theta(X,Z)\rangle_A.
\end{align*}
So $K=0$ if and only if the last equation in \eqref{eqmain4} holds.

	\item We have    
	\begin{align*}
		X\bullet (Y\bullet \al)&= X\bullet (\mathrm{ad}^{\h}_Y)^* \al\\
		&= (\mathrm{ad}^{\h}_X)^*(\mathrm{ad}^{\h}_Y)^* \al \\
		(X\bullet Y)\bullet \al&=(\e[X,Y]_{\h}+ \theta(X,Y)+\theta_{2}(X,Y))\bullet  \al\\
		&= (\mathrm{ad}^{\h}_{\e[X,Y]})^* \al \\
		Y\bullet (X\bullet \al)&= (\mathrm{ad}^{\h}_Y)^*(\mathrm{ad}^{\h}_X)^* \al).
	\end{align*}
$\bullet$ If the metric is $\Li$-quadratic then $Q(X,Y,\al)=0$ if and only if $(\h,\br_\h)$ is a Lie algebra.

$\bullet$ If the metric is $\Ri$-quadratic then $Q(X,Y,\al)=0$ if and only if $(\h,\br_\h)$ is a Lie algebra and $[X,[Y,Z]_\h]_\h=0$ for any $X,Y,Z\in\h$.

	\item  
	We have
	\begin{align*}
		X\bullet (Y\bullet a)&=X\bullet( F(Y)a+ \nu(Y,a))\\
		&= F(X)F(Y)a+ \nu(X, F(Y)a)+ (\mathrm{ad}^{\h}_{X})^*\nu(Y,a)\\
		(X\bullet Y)\bullet a &= (\e[X,Y]_{\h}+ \theta(X,Y)+\theta_{2}(X,Y))\bullet a\\
		&= F(\e[X,Y]_{\h})a+ \nu(\e[X,Y]_{\h},a)+ \theta(X,Y)\bullet_A a+ \mu(\theta(X,Y),a)\\
		Y\bullet(X\bullet a) &= Y\bullet (F(X)a+ \nu(X,a))\\
		&= F(Y)F(X)a+ \nu(Y, F(X)a)+ (\mathrm{ad}_{Y})^*\nu(X,a).
	\end{align*}
	
	$\bullet$ If the metric is $\Li$-quadratic then $Q(X,Y,a)=0$ if and only if $\langle Q(X,Y,a),Z\rangle_n=-\langle Q(X,Y,Z),a\rangle_n=0$ and
	\[ \Li_{\theta(X,Y)}=[F(X,F(Y))]-F([X,Y]_\h). \]
	The first condition is satisfied in the previous case and the second one is equivalent to a part of the equation (3) in \eqref{eqmain3}.
	
	$\bullet$ if the metric is $\Ri$-quadratic then 
	$Q(X,Y,a)=0$ if and only if
	\[ \begin{cases}
		\ad_{\theta(X,Y)}^A=[F(X),F(Y)],\\
		K:=\nu(X, F(Y)a)+ (\mathrm{ad}^{\h}_{X})^*\nu(Y,a)-\mu(\theta(X,Y),a)-\nu(Y, F(X)a)- (\mathrm{ad}_{Y})^*\nu(X,a)=0.
	\end{cases} \]

The first relation is equivalent to  the equation (2) in \eqref{eqmain4}. We have
\begin{align*}
	\prec K,Z\succ&=\langle\om(X,Z),F(Y)a\rangle_A-
	\langle\om(Y,[X,Z]_\h),a\rangle_A-\langle F(Z)\theta(X,Y),a\rangle_A-\langle\om(Y,Z),F(X)a\rangle_A+\langle\om(X,[Y,Z]_\h),a\rangle_A.
\end{align*} So $K=0$ if and only if
\[ F(Y)\om(X,Z)+\om(Y,[X,Z]_\h) +F(Z)\theta(X,Y)-F(X)\om(Y,Z)-\om(X,[Y,Z]_\h)=0.\]This is equivalent to the equation (4) in \eqref{eqmain4}.

	\item We have   
	\begin{align*}
		X\bullet (a\bullet Y)&= X\bullet (G(Y)a+\rho(a,Y))\\
		&= F(X)G(Y)a+\nu(X,G(Y)a )+(\mathrm{ad}^{\h}_X)^*(\rho(a,Y)),\\
		(X\bullet a)\bullet Y&=(F(X)a+\nu(X,a))\bullet Y\\
		&=G(Y)F(X)a+\rho(F(X)a,Y),\\
		a\bullet(X\bullet Y)&=\e G([X,Y]_\h)a+\rho(a,\e [X,Y]_\h)+a\bullet_A \theta(X,Y)+\mu(a,\theta(X,Y)).
	\end{align*}
	
	$\bullet$ If the metric is $\Li$-quadratic then $Q(X,a,Y)=0$ if and only if
	\[ \begin{cases}
		\Ri_{\theta(X,Y)}=[F(X),G(Y)]-G([X,Y]_\h)\\
		K:=\nu(X,G(Y)a )+(\mathrm{ad}^{\h}_X)^*(\rho(a,Y))-\rho(F(X)a,Y)-\rho(a, [X,Y]_\h)-\mu(a,\theta(X,Y))=0.
	\end{cases} \]
We have
\begin{align*}
	\prec K,Z\succ&=-\langle \theta(X,Z),G(Y)a\rangle_A
	+\langle\om(Y,[X,Z]_\h),a\rangle_A+\langle\om(Y,Z),F(X)a\rangle_A+\om([X,Y]_\h,Z),a\rangle_A+\langle G(Z)a,\theta(X,Y)\rangle_A.
\end{align*}
So $Q(X,a,Y)=0$ if and only if the equation (3) and equation (6) in \eqref{eqmain3} hold.

$\bullet$ If the metric is $\Ri$-invariant $Q(X,a,Y)=0$ if and only if
\[ \begin{cases}
	\ad_{\theta(X,Y)}^A=[F(X),F(Y)]\\
	K:=-\nu(X,F(Y)a )+(\mathrm{ad}^{\h}_X)^*(\rho(a,Y))-\rho(F(X)a,Y)-\mu(a,\theta(X,Y))=0.
	\end{cases} \]
We have
\begin{align*}
	\langle K,Z\succ&=-\langle\om(X,Z),F(Y)a\rangle_A
	-\langle\theta(Y,[X,Z]_\h),a\rangle_A-\langle\om(Y,Z),F(X)a\rangle_A-\langle F(Z)a,\theta(X,Y)\rangle.
\end{align*}So $K=0$ if and only if
\[ F(Y)\om(X,Z)-\theta(Y,[X,Z]_\h)+F(X)\theta(Y,Z)+F(Z)\theta(X,Y)=0. \]
So $Q(X,a,Y)=0$ if and only if the equations (2) and (5) in \eqref{eqmain4} hold.

	\item We have   
	\begin{align*}
		X\bullet (a\bullet b)&= X\bullet( a\bullet_A b+ \mu(a,b))\\
		&=F(X)(a\bullet_A b)+\nu(X,a\bullet_A b)+ (\mathrm{ad}^{\h}_X)^*(\mu(a,b))\\
		(X\bullet a)\bullet b&= (F(X)a+ \nu(X,a))\bullet b\\
		&=F(X)a\bullet_A b+\mu(F(X)a,b)\\
		a\bullet (X\bullet b)&= a\bullet (F(X)b+ \nu(X,b) )\\
		&= a\bullet_A F(X)b+ \mu(a,F(X)b).
	\end{align*}
	
	$\bullet$ If the metric is $\Li$-quadratic then $Q(X,a,b)=0$ if and only if $\langle Q(X,a,b),Y\rangle_n=-\langle Q(X,a,Y),b\rangle_n=0$ and $F(X)$ is a derivation for any $X$. We have already get the condition for $\langle Q(X,a,Y),b\rangle_n=0$.
	
	$\bullet$ If the metric is $\Ri$-quadratic then $Q(X,a,b)=0$ if and only if $F(X)$ is a derivation for any $X$ and
	\[ K:= \nu(X,a\bullet_A b)+ (\mathrm{ad}^{\h}_X)^*(\mu(a,b))-\mu(F(X)a,b)
	-\mu(a,F(X)b)=0.\] 
	We have
	\begin{align*}
		\prec K,Y\succ&=\langle\om(X,Y),[a,b]_A\rangle_A
		-\langle F([X,Y]_\h)a,b\rangle -\langle F(Y)F(X)a,b\rangle-\langle_A F(Y)a,F(X)b\rangle_A. 
		\end{align*}
	So $K=0$ is equivalent to $\ad^A_{\om(X,Y)}=[F(X),F(y)]-F([X,Y]_\h)$ which is equivalent the equation (1) in \eqref{eqmain4}.
	
	\item  We have 
	\begin{align*}
		a\bullet (b\bullet  X)&=a\bullet ( G(X)b+\rho(b,X))\\
		&=a\bullet_A G(X)b + \mu(a, G(X)b))\\
		(a\bullet b)\bullet X&=(a\bullet_Ab+\mu(a,b)) \bullet X\\
		&= G(X)(a\bullet_Ab)+\rho(a\bullet_A b,X)\\
		b\bullet (a\bullet X)&=b\bullet( G(X)a+\rho(a,X))\\
		&=b\bullet_AG(X)a + \mu(b, G(X)a).
	\end{align*}

$\bullet$ If the metric is $\Li$-quadratic then $Q(a,b,X)=0$ if and only if
\[ \begin{cases}
	G(X)(a\bullet_Ab)=a\bullet_A G(X)b-b\bullet_AG(X)a,\\
	K:=\mu(a, G(X)b))-\rho(a\bullet_A b,X)-\mu(b, G(X)a)=0.
	\end{cases}
 \]
 
 We have
 \begin{align*}
 	\prec K,Y\succ&=-\langle G(Y)a,G(X)b\rangle_A+\langle \om(X,Y),a\bullet_A b\rangle_A+\langle G(Y)b,G(X)a\rangle_A.
 \end{align*}
So $K=0$ is equivalent to
\[ \Li_{\om(X,Y)}^A=G(X)^*G(Y)-G(Y)^*G(X). \]
So $Q(a,b,X)=0$ if and only if the second part of the first equations and  the second equation in \eqref{eqmain3} hold..

$\bullet$ If the metric is $\Ri$-quadratic  gives that $Q(a,b,X)=0$ if and only if $F$ is a derivation an $K=0$ where
\[ K=-\mu(a, F(X)b))-\rho([a,b]_A,X)+\mu(b, F(X)a)=0 \]
\begin{align*}
	\prec K,Y\succ&=-\langle F(Y)a,F(X)b\rangle_A-\langle \theta(X,Y),[a, b]_A\rangle_A+\langle F(Y)b,F(X)a\rangle_A.
\end{align*}So $K=0$ if and only if the second equation in \eqref{eqmain4} holds.

	\item We have    
	\begin{align*}
		a \bullet (b\bullet c)&=a\bullet (b\bullet_A c+ \mu(b,c))\\
		&=a\bullet_A(b\bullet_A c)+\mu(a,b\bullet_A c)\\
		(a\bullet b)\bullet c&=(a\bullet_A b+\mu(a,b))\bullet  c\\
		&=(a\bullet_A b)\bullet_A c+\mu(a\bullet_A b,c)\\
		b\bullet (a\bullet c)&=b\bullet (a\bullet_A c+ \mu(a,c))\\
		&=b\bullet_A(a\bullet_A c)+\mu(b,a\bullet_A c).
	\end{align*}
	$\bullet$ If the metric is $\Li$-quadratic then $Q(a,b,c)=0$ is equivalent to $\langle Q(a,b,c),X\rangle_n=-\langle Q(a,b,X),c\rangle_n=0$ and $(A,\bullet_A)$ is a left Leibniz algebra.
	
	$\bullet$ If the metric is $\Ri$-quadratic then $Q(a,b,c)=0$ is equivalent to $(A,\br_A)$ is a Lie algebra and
	\[ K=\mu(a,[b, c]_A)-\mu([a, b]_A,c)-\mu(b,[a c]_A)=0. \]
	We have
	\begin{align*}
		\prec K,X\succ&=\langle F(X)a,[b, c]_A\rangle_A
		-\langle F(X)[a,b]_A,c\rangle_A-\langle F(X)b,[a,c]_A\rangle_A
	\end{align*}which is equivalent to $F(X)$ is a derivation.
	
	\item We have
	\begin{align*}
		a\bullet (X\bullet Y)&=a\bullet (\e[X,Y]_{\h}+ \theta(X,Y)+\theta_{2}(X,Y))\\
		&=G(\e[X,Y]_{\h})a+\rho(a,\e[X,Y]_{\h})+ a\bullet_A\theta(X,Y)+ \mu(a,\theta(X,Y))\\
		(a\bullet X)\bullet Y&=(G(X)a+\rho(a,X))\bullet Y\\
		&= G(Y)G(X)a+ \rho(G(X)a, Y),\\
		X\bullet (a\bullet Y)&=X\bullet (G(Y)a+\rho(a,Y))\\
		&= F(X)G(Y)a+\nu(X, G(Y)a) +(\mathrm{ad}^{\h}_X)^*(\rho(a,Y)).
	\end{align*}
	
	$\bullet$ if the metric is $\Li$-quadratic then $Q(a,X,Y)=0$ if and only if
	\[ \begin{cases}
		\Ri^A_{\theta(X,Y)}=F(X)G(Y)+G(Y)G(X)-G([X,Y]_{\h}),\\
		K:=\rho(a,[X,Y]_{\h})+\mu(a,\theta(X,Y))
		-\rho(G(X)a, Y)-\nu(X, G(Y)a)-(\mathrm{ad}^{\h}_X)^*(\rho(a,Y))=0.
	\end{cases} \]
	We have
	\begin{align*}
		\prec K,Z\succ&=-\langle \om([X,Y]_\h,Z),a\rangle_A-\langle G(Z)a,\theta(X,Y)\rangle_A+\langle \om(Y,Z),G(X)a\rangle_A+\langle \theta(X,Z),G(Y)a\rangle_A-\langle\om(Y,[X,Z]_\h),a\rangle_A.
	\end{align*}
So $K=0$ if and only if
\[ G(Z)^*\theta(X,Y)-G(Y)^*\theta(X,Z)-G(X)^*\om(Y,Z)+\om([X,Y]_\h,Z)+\om(Y,[X,Z]_\h)=0. \]
This equation resembles to equation (6) and by taking the difference, we have
 $Q(a,X,Y)=0$ if and only if the second part of equation (3) and the equation (7) in \eqref{eqmain3} hold.

$\bullet$ If the metric is $\Ri$-quadratic then
$Q(a,X,Y)=0$ if and only if
\[ \begin{cases}
	\ad^A_{\theta(X,Y)}=[F(X),F(Y)],\\
	K:=\mu(a,\theta(X,Y))
	+\rho(F(X)a, Y)+\nu(X, F(Y)a)-(\mathrm{ad}^{\h}_X)^*(\rho(a,Y))=0.
\end{cases} \]
We have
\begin{align*}
	\prec K,Z\succ&=\langle F(Z)a,\theta(X,Y)\rangle_A
	+\langle\theta(Y,Z),F(X)a\rangle_A+\langle\om(X,Z),F(Y)a\rangle_A+\langle \theta(Y,[X,Z]_\h),a\rangle_A.
\end{align*}So $K=0$ if and only if
\[ F(Z)\theta(X,Y)+F(X)\theta(Y,Z)-F(Y)\om(Z,X)-\theta(Y,[X,Z]_\h)=0. \]
Finally, $Q(a,X,Y)=0$ if and only if the equations (2) and (5) in \eqref{eqmain4} hold.

		\item We have 
	\begin{align*}
		a\bullet (X\bullet b)&=a\bullet( F(X)b+\nu(X,a))\\
		&=a\bullet_A F(X)b+\mu(a,F(X)b)\\
		(a\bullet X)\bullet b&=(G(X)a+\rho(a,X))\bullet b\\
		&= G(X)a\bullet_A b+\mu(G(X)a,b)\\
		X\bullet (a\bullet b)&=X\bullet( a\bullet_Ab+ \mu(a,b))\\
		&=F(X)(a\bullet_Ab)+\nu(X,a\bullet_Ab)+ (\mathrm{ad}^{\h}_X)^*(\mu(a,b)).
	\end{align*}

$\bullet$ If the metric is $\Li$-quadratic then $Q(a,X,b)=0$ if and only if $\langle Q(a,X,b),Y\rangle_n=-\langle Q(a,X,Y),b\rangle_n=0$ and

\[ F(X)(a\bullet_Ab)=a\bullet_A F(X)b-G(X)a\bullet_A b \]
which is equivalent to the first equation in \eqref{eqmain3}.

$\bullet$ If the metric is $\Ri$-quadratic then $Q(a,X,b)=0$ if and only if $F(X)$ is a derivation and
\[ K:=\mu(a,F(X)b)+\mu(F(X)a,b)-\nu(X,[a,b]_A)- (\mathrm{ad}^{\h}_X)^*(\mu(a,b))=0. \]
We have
\begin{align*}
	\prec K,Y\succ&=\langle F(Y)a,F(X)b\rangle_A+\langle F(Y)F(X)a,b\rangle_A
	-\langle\om(X,Y),[a,b]_A+\langle F([X,Y]_\h)a,b\rangle_A.
	\end{align*}So $K=0$ is equivalent to
\[ -F(X)F(Y)+F(Y)F(X)-\ad^A_{\om(X,Y)}+F([X,Y]_\h)=0 \]
which is equivalent to the first equation in \eqref{eqmain4}.

	\item $Q(X,a,\al)=Q(a,X,\al)= Q(a,b,\al)= 0$ are satisfied.\end{enumerate}

\section{ Low dimensional metrised, $\Li$-quadratic or $\Ri$-quadratic left Leibniz algebras}\label{section8}

We illustrate all our results by giving the list of metrised symmetric Leibniz algebras of dimension 4,5 and 6 obtained from indecomposable non abelian quadratic Lie algebras, the list of $\Li$-quadratic non Lie left Leibniz algebras of $\leq$ 4 and the list of $\Ri$-quadratic non Lie Leibniz algebras of dimension $\leq$ 5.

{\footnotesize
	
	{\renewcommand*{\arraystretch}{1.4}
		\begin{center}	
			\begin{tabular}{|c|c|c|}
				\hline
				Algebra&The non vanishing bracket&The metric\\
				\hline
				$\mathrm{os}(4,\la)$&
				$[e_2,e_3]=e_1,[e_4,e_2]=\la e_3,[e_4,e_3]=-\la e_2,e_4\circ e_4=\mu e_1$&$\{(2,2)=(3,3)=\frac1{\la},(1,4)=1\}$\\
				\hline
				$\G_{1,4}$&$[e_4,e_2]=e_2,\;[e_4,e_3]=-e_3,\;[e_2,e_3]=e_1,e_4\circ e_4=\mu e_1$&$\{(1,4)=(2,3)=1\}$\\
				\hline
				$\G_{1,5}$&$[e_2,e_3]=e_1,[e_3,e_4]=-e_1,[e_5,e_2]=e_3,$&$\{(1,5)=-(2,2)=(3,3)=(4,4)=1\}$\\
				&$[e_5,e_3]=e_2-e_4,[e_5,e_4]=e_3,e_5\circ e_5=\mu e_1$&\\
				
				\hline
				$\mathrm{osc}(6,(\la_1,\la_2))$&
				$[e_2,e_4]=[e_3,e_5]=e_1,[e_6,e_2]=\la_1 e_4$&$\{(2,2)=(3,3)=(4,4)=(5,5)=\frac1{\la} $\\
				&$[e_6,e_4]=-\la_1e_2,[e_6,e_3]=\la_2e_5,[e_6,e_5]=-\la_2e_3$,$e_6\circ e_6=\mu e_1$&$(1,6)=1\}$\\		
				\hline	
				$\mathfrak{n}_1(2,2)$&$[e_6,e_3]=e_2,[e_6,e_5]=e_4,[e_3,e_5]=e_1.$&$\{(1,6)=(2,5)=-(3,4)=1\}$\\
				\hline
				$\mathfrak{n}_2(2,2)$&
				$[e_6,e_2]=e_2+te_3,[e_6,e_3]=-te_2+e_3,[e_6,e_4]=-e_4+te_5$&$\{(1,6)=(2,5)=-(3,4)=1\}$\\
				&$[e_6,e_5]=-te_4-e_3, [e_2,e_4]=-te_1,[e_2,e_5]=e_1,$&\\	
				&$[e_3,e_4]=-e_1,[e_3,e_5]=-te_1,\;e_6\circ e_6=\mu e_1\quad t>0$&\\
				\hline
				$\mathfrak{n}_3(2,2)$&$[e_6,e_2]=e_3,[e_6,e_3]=-e_2,[e_6,e_4]=\epsilon e_2+e_5,\e^2=1,e_6\circ e_6=\mu e_1$&$	\{(1,6)=(2,5)=-(3,4)=1\}$\\
				&$[e_6,e_5]=\epsilon e_3-e_4, [e_2,e_4]=-e_1,[e_3,e_5]=-e_1,[e_4,e_5]=\e e_1.$&\\	
				\hline
				$\mathfrak{n}_4(2,2)$&$	[e_6,e_2]=e_3,[e_6,e_3]=-e_2,[e_6,e_4]=te_5,e_6\circ e_6=\mu e_1$&$\{-(2,2)=-(3,3)=(4,4)=(5,5)=1  $\\	
				&$[e_6,e_5]=-te_4,  [e_2,e_3]=-e_1,[e_4,e_5]=te_1.\quad (t>0)$&
				$(1,6)=1\}$\\
				\hline
				$\mathfrak{n}_5(2,2)$&$[e_6,e_2]=e_2,[e_6,e_3]=e_2+e_3,[e_6,e_4]=-e_4,[e_6,e_5]=e_4-e_5,$&$\{(1,6)=(2,5)=-(3,4)=1\}$\\
				&$[e_2,e_5]=e_1,[e_3,e_4]=-e_1,[e_3,e_5]=e_1,e_6\circ e_6=\mu e_1.$&\\
				\hline
				$\mathfrak{n}_6(2,2)$&$[e_6,e_2]=e_2,[e_6,e_3]=-e_3,[e_6,e_4]=te_4,e_6\circ e_6=\mu e_1$&$\{(1,6)=(2,3)=(4,5)=1  \}$\\
				&$[e_6,e_5]=-te_5,  [e_2,e_3]=e_1,[e_4,e_5]=te_1,t\geq1$&\\
				\hline
			\end{tabular}
			\captionof{table}{Metrised symmetric Leibniz algebras of dimension 4,5 and 6 obtained from indecomposable non abelian quadratic Lie algebras $(\mu\not=0)$.\label{1}}
\end{center}}}

{\renewcommand*{\arraystretch}{1.4}
	\begin{center}	
		\begin{tabular}{|c|c|c|}
			\hline
			Algebra&The non vanishing bracket&The metric\\
			\hline
			$\g_{3,1}$&
			$e_3\bullet e_1=\la e_2,\; e_3\bullet e_2=-\la e_1$, $\la \neq 0$&$\{(1,1)=(2,2)=1,(3,3)=\pm1\}$\\
			\hline
			$\g_{3,2}$	&$e_3\bullet e_1=\la e_2,\; e_3\bullet e_2=\la e_1$, $\la \neq 0$&$\{(1,1)=-1,(2,2)=(3,3)=1\}$\\
			\hline
			$\g_{3,3}$	&$e_2\bullet e_2 =-\mu e_3, \; e_2\bullet e_3=\mu e_1, \mu \neq 0$&$\{(1,2)=(2,1)=(3,3)=1\}$\\
			\hline
			$\g_{4,1}$&$e_1\bullet e_3= e_4,\; e_1\bullet e_4=-e_3,\; e_2\bullet e_3=e_4,$&$\{(1,1)= b,(2,2)=(3,3)=1,(4,4)=a \}, $\\
			&$e_2\bullet e_4= -e_3$&$a^2=b^2=1$\\		
			\hline	
			$\g_{4,2}$	&$e_1\bullet e_1=ae_4,\; e_1\bullet e_2=-ae_3,\; e_2\bullet e_1=be_4, \; e_2\bullet e_2=-be_3$&$\{(1,3)=(3,1)=(2,4)=(4,2)=1\}$\\
			\hline
			$\g_{4,3}$	&$e_1\bullet e_2=e_1,\; e_2 \bullet e_1=-e_1+ae_4,\; e_2\bullet e_2=-ae_3,$&$\{(1,3)=(3,1)=(2,4)=(4,2)=1\}$\\
			&$e_1\bullet e_3=-e_4, \;e_2\bullet e_3=e_3$&\\
			\hline
			$\g_{4,4}$	&$e_4\bullet e_4=-\al e_2,\;
			e_4\bullet e_2=\la e_3+\al e_1,\; e_4\bullet e_3=-\la e_2$& $	\{(1,6)=(2,5)=-(3,4)=1\}$\\	
			\hline
			$\g_{4,5}$		&$e_2\bullet e_3 =\la e_1,\;e_3\bullet e_2=-\la e_1,\; e_4\bullet e_2=\la e_3,\; e_4\bullet e_3=-\la e_2 ,$ &$\{(1,6)=(2,5)=-(3,4)=1\}$\\
			&$ e_2\bullet e_4=-\la e_3,\;e_3\bullet e_4=\la e_2$&\\
			\hline
			$\g_{4,6}$&$e_4\bullet e_4=-\al e_2-\be e_3
			,\;e_3\bullet e_2=\ga e_1,e_4\bullet e_2=\al e_1,\;$&$	\{(1,6)=(2,5)=-(3,4)=1\}$\\
			&$ e_4\bullet e_3=\be e_1,\;e_3\bullet e_4=-\ga e_2,\quad \be \ga=0$&\\	
			\hline
		\end{tabular}
		\captionof{table}{$\Li$-quadratic non Lie left Leibniz algebras of dimension $\leq$ 4.\label{2}}
\end{center}}

{\renewcommand*{\arraystretch}{1.4}
	\begin{center}	
		\begin{tabular}{|c|c|c|}
			\hline
			Algebra&The non vanishing products&The metric\\
			\hline
			$\Li_{1,4}$&$e_1\bullet e_1=-\la_1e_4,
			e_1\bullet e_2=-\la_2e_4,
			e_2\bullet e_1=\la_1e_3,
			e_2\bullet e_1=\la_2e_3$&$\{(1,3)=(2,4)=1 \}$\\
			\hline
			$\Li_{1,5}$& $e_1\bullet e_1=-\la_1 e_5,\;e_1\bullet e_2=-\la_2 e_5+\rho e_3,e_2\bullet e_1=\la_1 e_4-\rho e_3$&$\{(1,4)=(2,5)=(3,3)=1\}$\\
			&$e_2\bullet e_2=\la_2 e_4,
			e_1\bullet e_3=\mu e_5, e_2\bullet e_3=-\mu e_4, e_3\bullet e_1=\rho e_5, e_3\bullet e_2=-\rho e_4$&$\rho=0$ or $\rho=-\mu$\\
			\hline		
			
		\end{tabular}
		\captionof{table}{$\Ri$-quadratic non Lie Leibniz algebras of dimension $\leq$ 5. .\label{3}}
\end{center}}

\end{document}